\renewcommand{\arraystretch}{1.25}
\theoremstyle{plain}
\newtheorem{thm}{Theorem}[section]
\newtheorem{prop}[thm]{Proposition}
\newtheorem{coro}[thm]{Corollary}
\newtheorem{lem}[thm]{Lemma}
\newtheorem*{thm-LG}{\hyperref[thm:LG]{Theorem~\ref*{thm:LG}}}
\theoremstyle{definition}
\newtheorem{defn}[thm]{Definition}
\theoremstyle{remark}
\newtheorem*{rem}{Remark}
\newtheorem{exa}{Example}
\DeclareMathAlphabet{\mathsfit}{\encodingdefault}{\sfdefault}{m}{sl}
\SetMathAlphabet{\mathsfit}{bold}{\encodingdefault}{\sfdefault}{bx}{sl}
\newcommand{\medpmatrix}[2][0.85]{\!\!\!\!\scalebox{#1}{
  \setlength{\arraycolsep}{2.5pt}
  \renewcommand{\arraystretch}{0.85}$\begin{pmatrix}#2\end{pmatrix}$}}
\newcommand{\medbmatrix}[2][0.85]{\!\!\!\!\scalebox{#1}{
  \setlength{\arraycolsep}{2.5pt}
  \renewcommand{\arraystretch}{0.85}$\begin{bmatrix}#2\end{bmatrix}$}}
\newcommand{\ts}{\hspace{.11111em}}
\newcommand{\tts}{\hspace{.05555em}}
\newcommand{\nts}{\hspace{-.11111em}}
\newcommand{\ntts}{\hspace{-.05555em}}
\newcommand{\bbR}{{\mathbb{R}}}
\newcommand{\bbC}{{\mathbb{C}}}
\newcommand{\bbZ}{{\mathbb{Z}}}
\newcommand{\bbN}{{\mathbb{N}}}
\newcommand{\bbQ}{{\mathbb{Q}}}
\newcommand{\scrV}{{\mathscr{V}}}
\newcommand{\scrF}{{\mathscr{F}}}
\newcommand{\scrP}{{\mathscr{P}}}
\newcommand{\scrA}{{\mathscr{A}}}
\newcommand{\scrB}{{\mathscr{B}}}
\newcommand{\scrS}{{\mathscr{S}}}
\newcommand{\calP}{{\mathcal{P}}}
\newcommand{\calA}{{\mathcal{A}}}
\newcommand{\calD}{{\mathcal{D}}}
\newcommand{\calR}{{\mathcal{R}}}
\newcommand{\calS}{{\mathcal{S}}}
\newcommand{\rmd}{{\mathrm{d}}}
\newcommand{\rmf}{{\mathrm{f}}}
\newcommand{\bsa}{{\bm{a}}}
\newcommand{\bsb}{{\bm{b}}}
\newcommand{\bsc}{{\bm{c}}}
\newcommand{\bsn}{{\bm{n}}}
\newcommand{\bsv}{{\bm{v}}}
\newcommand{\bsw}{{\bm{w}}}
\newcommand{\bsx}{{\bm{x}}}
\newcommand{\bsy}{{\bm{y}}}
\newcommand{\bsz}{{\bm{z}}}
\newcommand{\bszeta}{{\bm{\zeta}}}
\newcommand{\bsxi}{{\bm{\xi}}}
\newcommand{\bseta}{{\bm{\eta}}}
\newcommand{\bsE}{{\bm{E}}}
\newcommand{\bsH}{{\bm{H}}}
\newcommand{\bsI}{{\bm{I}}}
\newcommand{\bsQ}{{\bm{Q}}}
\newcommand{\bsM}{{\bm{M}}}
\newcommand{\bsG}{{\bm{G}}}
\newcommand{\bsV}{{\bm{V}}}
\newcommand{\bsF}{{\bm{F}}}
\newcommand{\bsD}{{\bm{D}}}
\newcommand{\bsR}{{\bm{R}}}
\newcommand{\bsS}{{\bm{S}}}
\newcommand{\bsA}{{\bm{A}}}
\newcommand{\bsP}{{\bm{P}}}
\newcommand{\bsJ}{{\bm{J}}}
\renewcommand{\Re}{\mathfrak{Re}}
\renewcommand{\Im}{\mathfrak{Im}}
\newcommand{\trans}{{\scriptscriptstyle \mathsfit{T}}}
\newcommand{\herm}{{\scriptscriptstyle \mathsfit{H}}}
\newcommand{\Euc}{\bsE}
\newcommand{\Hyp}{\bsH}
\newcommand{\Gaussian}{\bbZ[\ts i \ts]}
\newcommand{\bbZfour}{\mathbb{Z}_{\ntts/4\mathbb{Z}}\tts}
\newcommand{\bbZeight}{\mathbb{Z}_{\ntts/8\mathbb{Z}}\tts}
\newcommand{\bbZp}{\mathbb{Z}_{\ntts/p\mathbb{Z}}\tts}
\newcommand{\Mob}{M\nts\ddot{o}\tts b}
\newcommand{\SL}{S \ntts L}
\newcommand{\PSL}{P \ntts S \ntts L}
\newcommand{\SO}{SO}
\newcommand{\Platonic}{{\calP}}
\newcommand{\Apollonian}{{\calA}}
\newcommand{\dualApollonian}{{\calD}}
\newcommand{\SODelta}{SO_{\nts\!\varDelta}}
\newcommand{\SOF}{SO_{\ntts F}}
\newcommand{\OF}{O_{\ntts F}}
\newcommand{\SOhatF}{SO_{\ntts \hat F}}
\newcommand{\bsQSigma}{\bsQ_{\nts\varSigma}}
\newcommand{\bsQW}{\bsQ_{\ntts W}}
\newcommand{\bsQF}{\bsQ_{\! F}}
\newcommand{\bsGSigmaF}{\bsG_{\nts \varSigma,F}}
\begin{document}


\title[Integral Orthoplicial Apollonian Sphere Packings and the Local-Global Principle]{The Local-Global Principle for Integral Bends\\in Orthoplicial Apollonian Sphere Packings}
\author[Nakamura]{Kei Nakamura}
\address{Department of Mathematics\\ University of California \\ Davis, CA 95616}
\email{knakamura@math.ucdavis.edu}
\subjclass[2010]{11D85, 52C17, 20H05, 11F06, 11E39}

\begin{abstract}
We introduce an \emph{orthoplicial} Apollonian sphere packing, which is a sphere packing obtained by successively inverting a configuration of 8 spheres with 4-orthplicial tangency graph. We will show that there are such packings in which the bends of all constituent spheres are integral, and establish the asymptotic local-global principle for the set of bends in these packings.
\end{abstract}

\maketitle


\section{Introduction} \label{sec:Introduction}

In this article, we introduce a new family of sphere packings in $\bbR^3$, which we call \emph{orthoplicial Apollonian sphere packings}; these packings are obtained by successively inverting a configuration of eight spheres with 4-orthplicial tangency graph. We show that there are such packings in which all constituent spheres have integral bends (oriented curvature), and establish the asymptotic local-global principle for the set of bends appearing in such packings: sufficiently large integer $n$ is the bend of some sphere in the packing, provided that $n$ avoids the local obstructions.

\subsection{Apollonian Packings} \label{ssec:AP}

The family of sphere packings that we work with in this article is a generalization of a few known families of circle/sphere packings. Let us briefly describe these packings for a perspective.

Let us first recall classical Apollonian circle packings. Take a configuration of four pairwise tangent circles in $\bbR^2$, having the tetrahedral tangency graph. For any sub-configuration of three circles, there exists a unique dual circle orthogonal to them, and inverting the whole configuration along the dual circle yields a new configuration of four pairwise tangent circles. Indefinitely continuing this process, as shown in \hyperref[fig:TACP]{Figure~\ref*{fig:TACP}}, we obtain a \emph{classical/tetrahedral} Apollonian circle packing.
\begin{figure}[h]
\vspace{-0.5mm}
\begin{center}
\includegraphics[width=30mm]{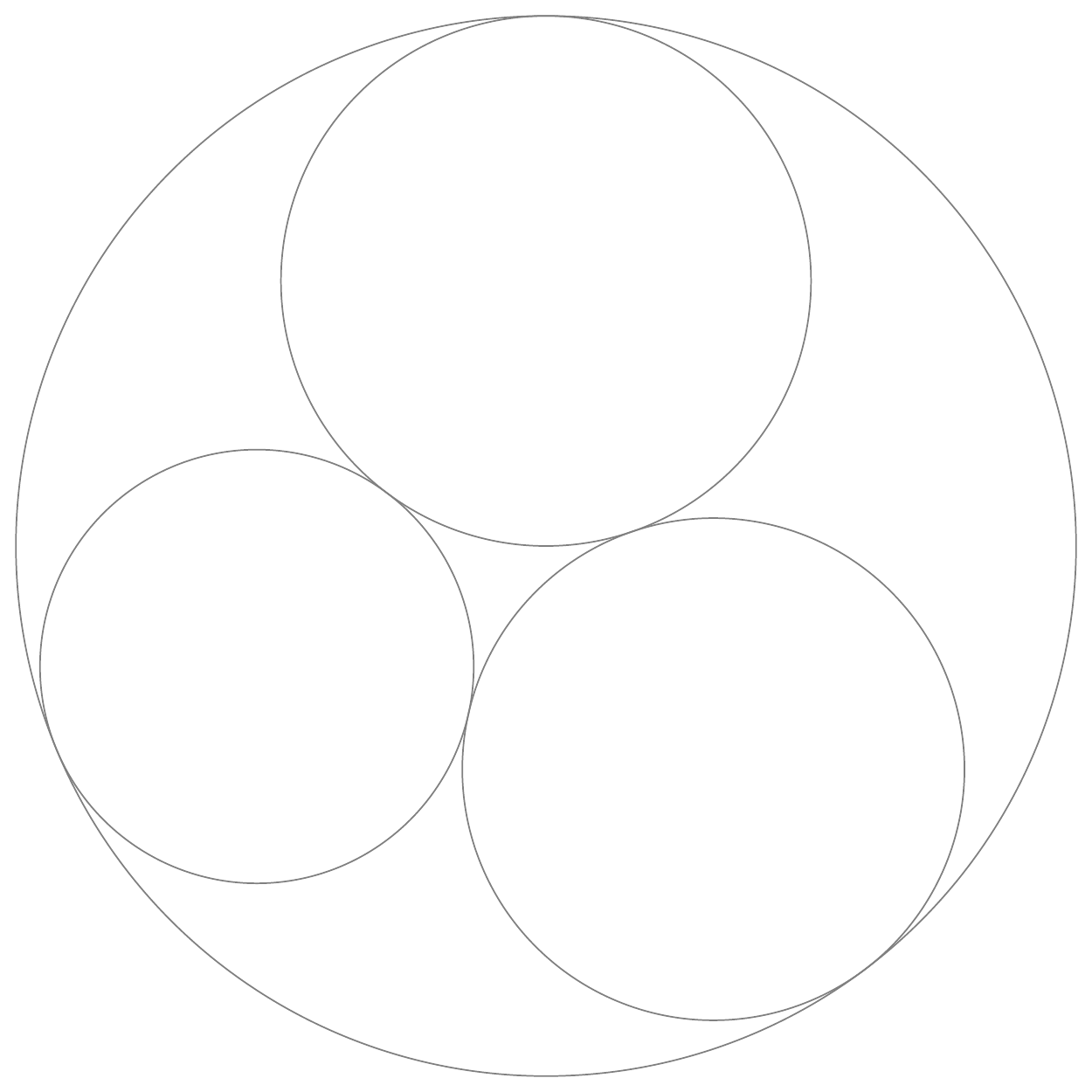}
\hspace{3mm}
\includegraphics[width=30mm]{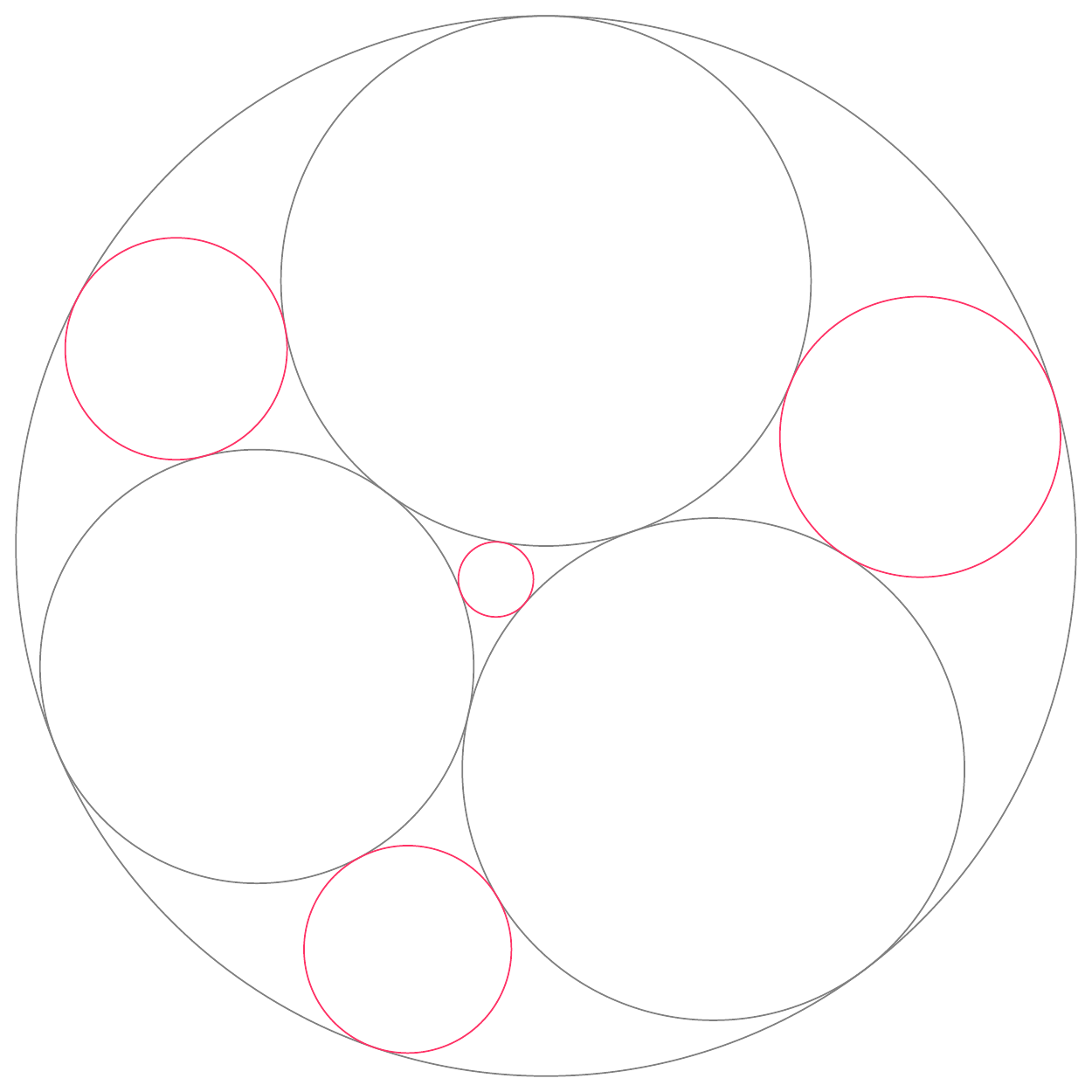}
\hspace{3mm}
\includegraphics[width=30mm]{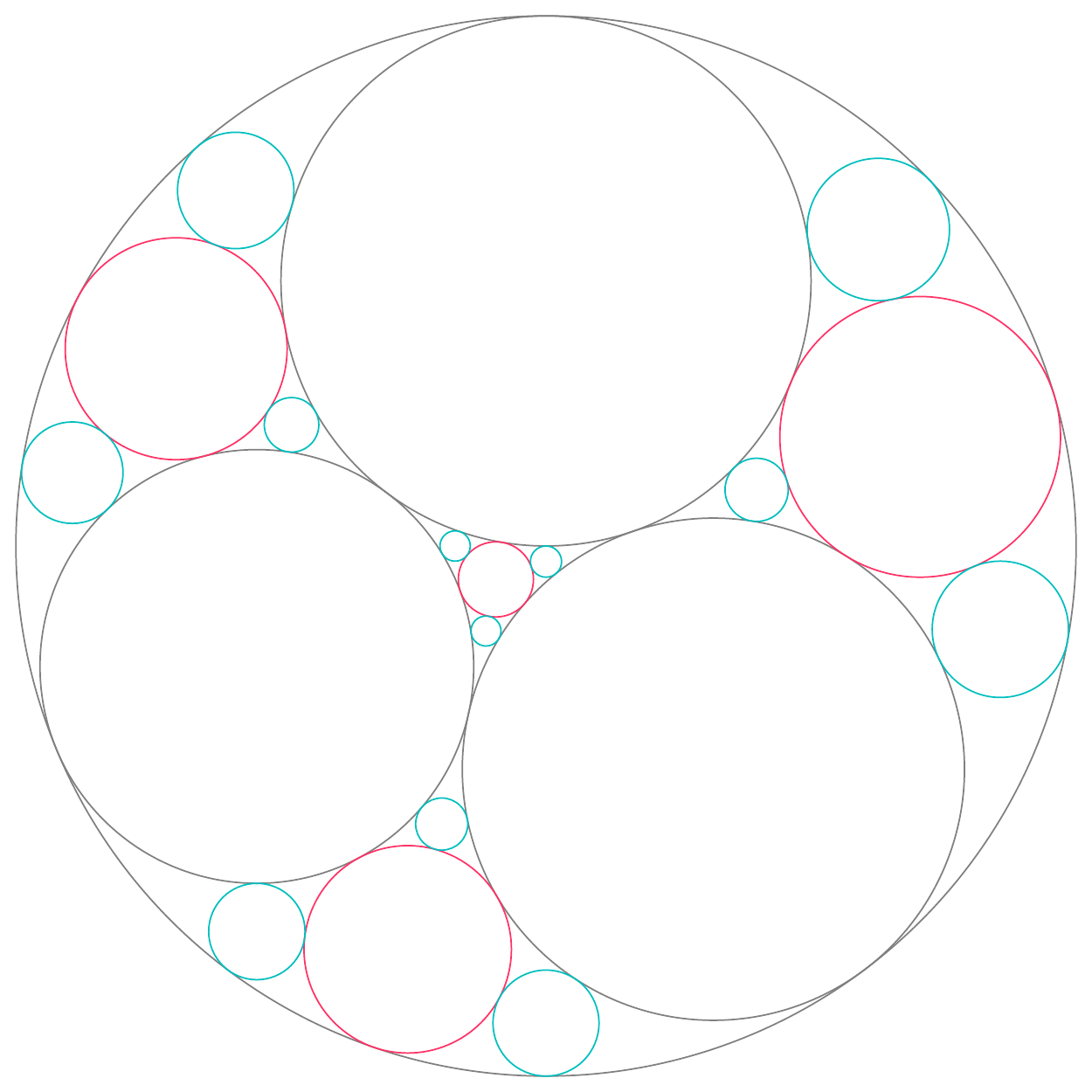}
\vspace{-3mm}
\end{center}
\caption{Construction of a \emph{tetrahedral} Apollonian circle packing}
\label{fig:TACP}
\vspace{-0.5mm}
\end{figure}

Guettler and Mallows generalized this construction by starting with an \emph{octahedral} configuration of six circles in $\bbR^2$, having the octahedral tangency graph \cite{GM}. For any sub-configuration of three pairwise tangent circles, there exists a unique dual circle orthogonal to them, and inverting the whole configuration along the dual circle yields a new octahedral configuration of six circles. Indefinitely continuing this process, as shown in \hyperref[fig:OACP]{Figure~\ref*{fig:OACP}}, we obtain an \emph{octahedral} Apollonian circle packing.
\begin{figure}[h]
\vspace{-0.5mm}
\begin{center}
\includegraphics[width=30mm]{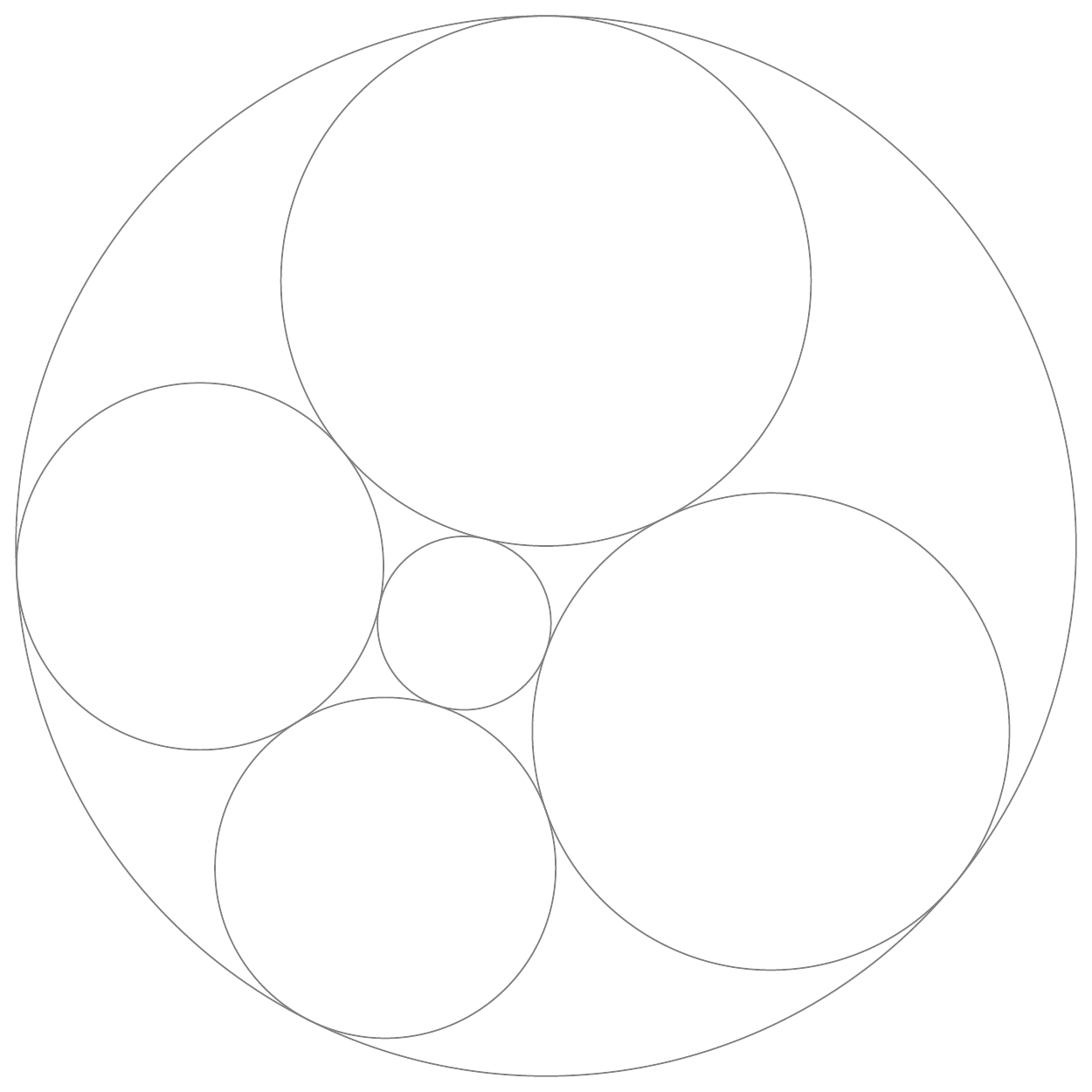}
\hspace{3mm}
\includegraphics[width=30mm]{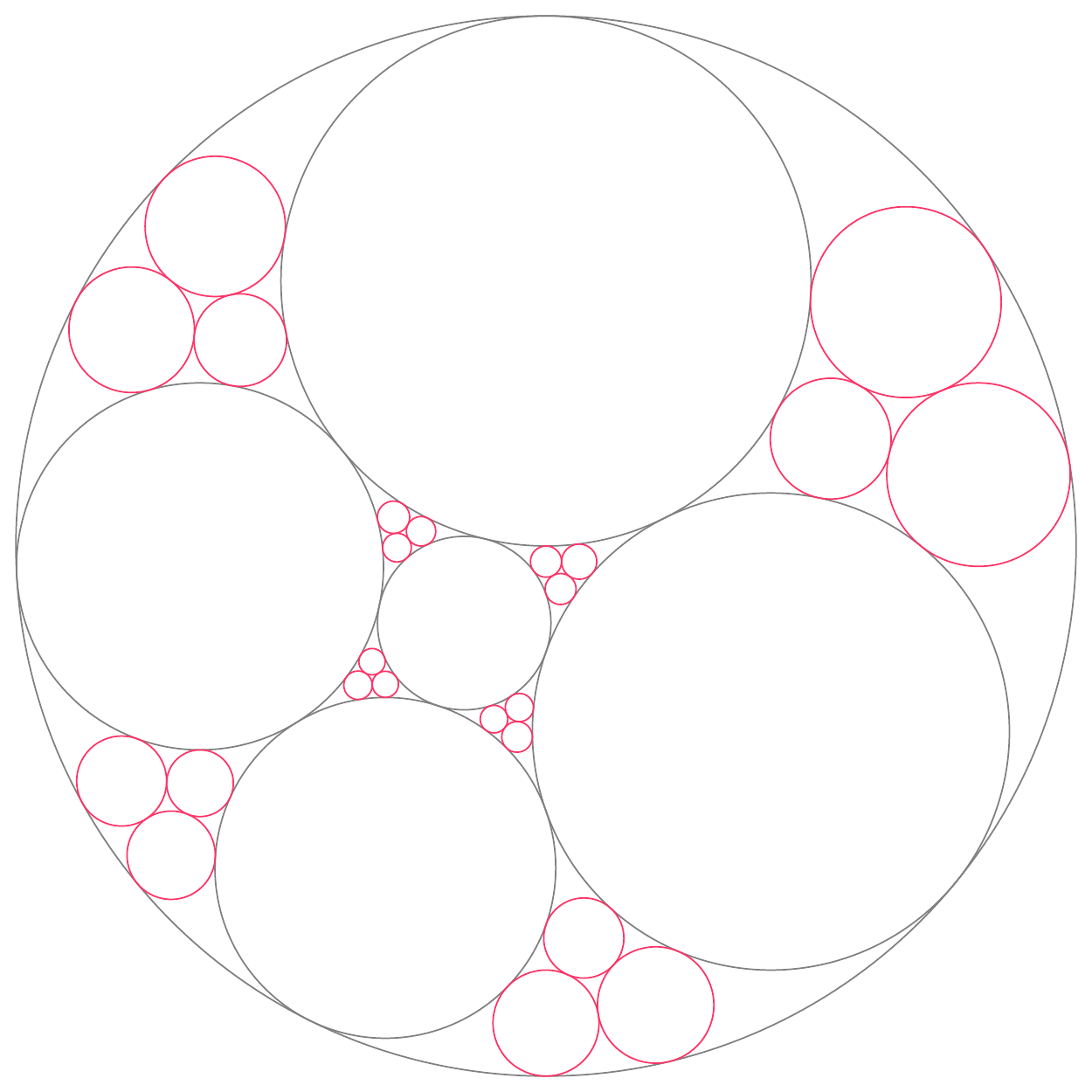}
\hspace{3mm}
\includegraphics[width=30mm]{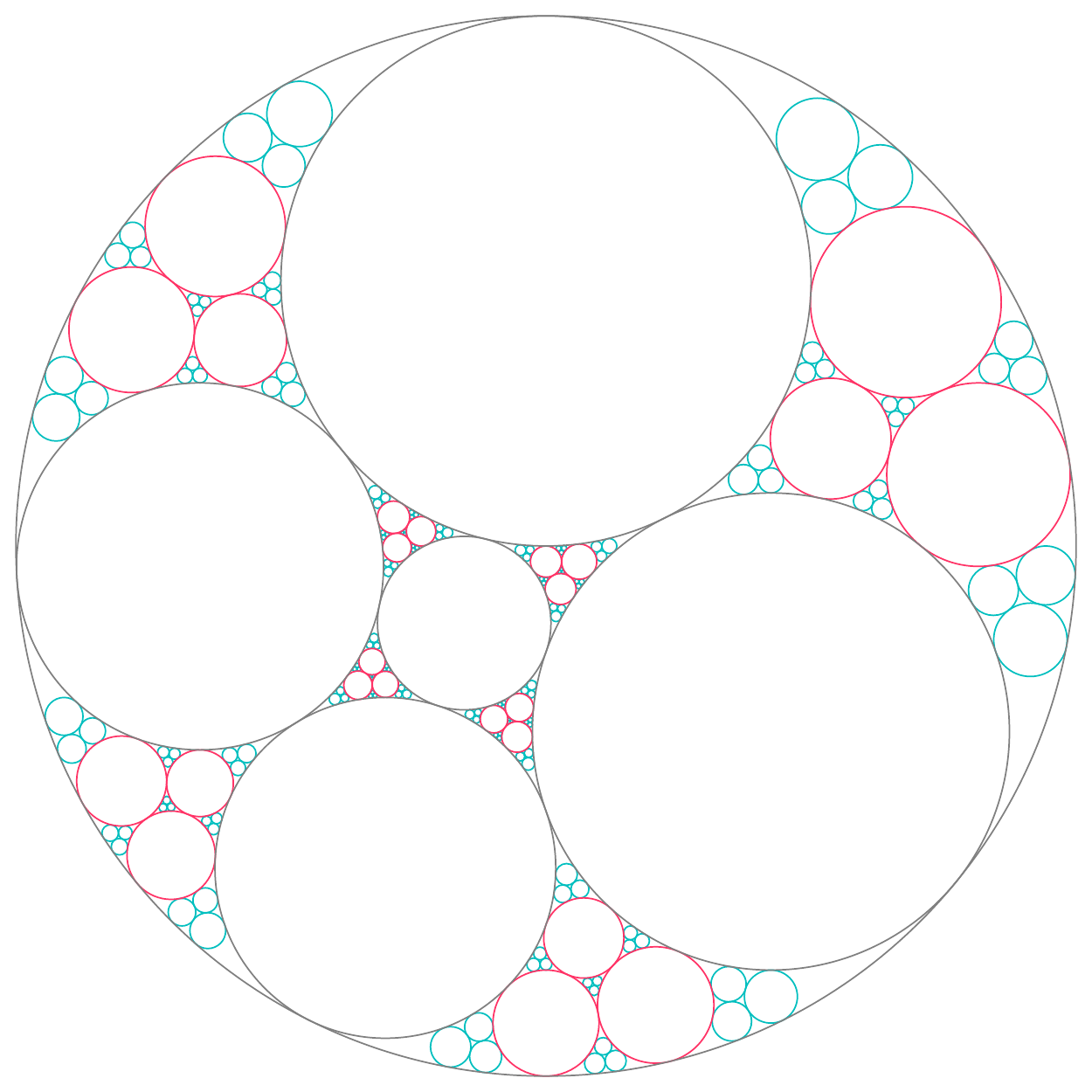}
\vspace{-3mm}
\end{center}
\caption{Construction of an \emph{octahedral} Apollonian circle packing}
\label{fig:OACP}
\vspace{-0.5mm}
\end{figure}

The 3-dimensional analogue of the tetrahedral Apollonian circle packings has been known for some time. Take a configuration of five pairwise tangent spheres in $\bbR^3$, having the \emph{4-simplicial} tangency graph, i.e.\;the 1-skeleton of the 4-simplex. For any sub-configuration of four spheres, there exists a unique dual sphere orthogonal to them, and inverting the whole configuration along the dual sphere yields a new configuration of five pairwise tangent spheres. Indefinitely continuing this process, as shown in \hyperref[fig:SASP]{Figure~\ref*{fig:SASP}}, we obtain a \emph{simplicial} Apollonian sphere packing.
\begin{figure}[h]
\vspace{-0.5mm}
\begin{center}
\includegraphics[trim = 70px 91px 51px 62px, clip, width=30mm]{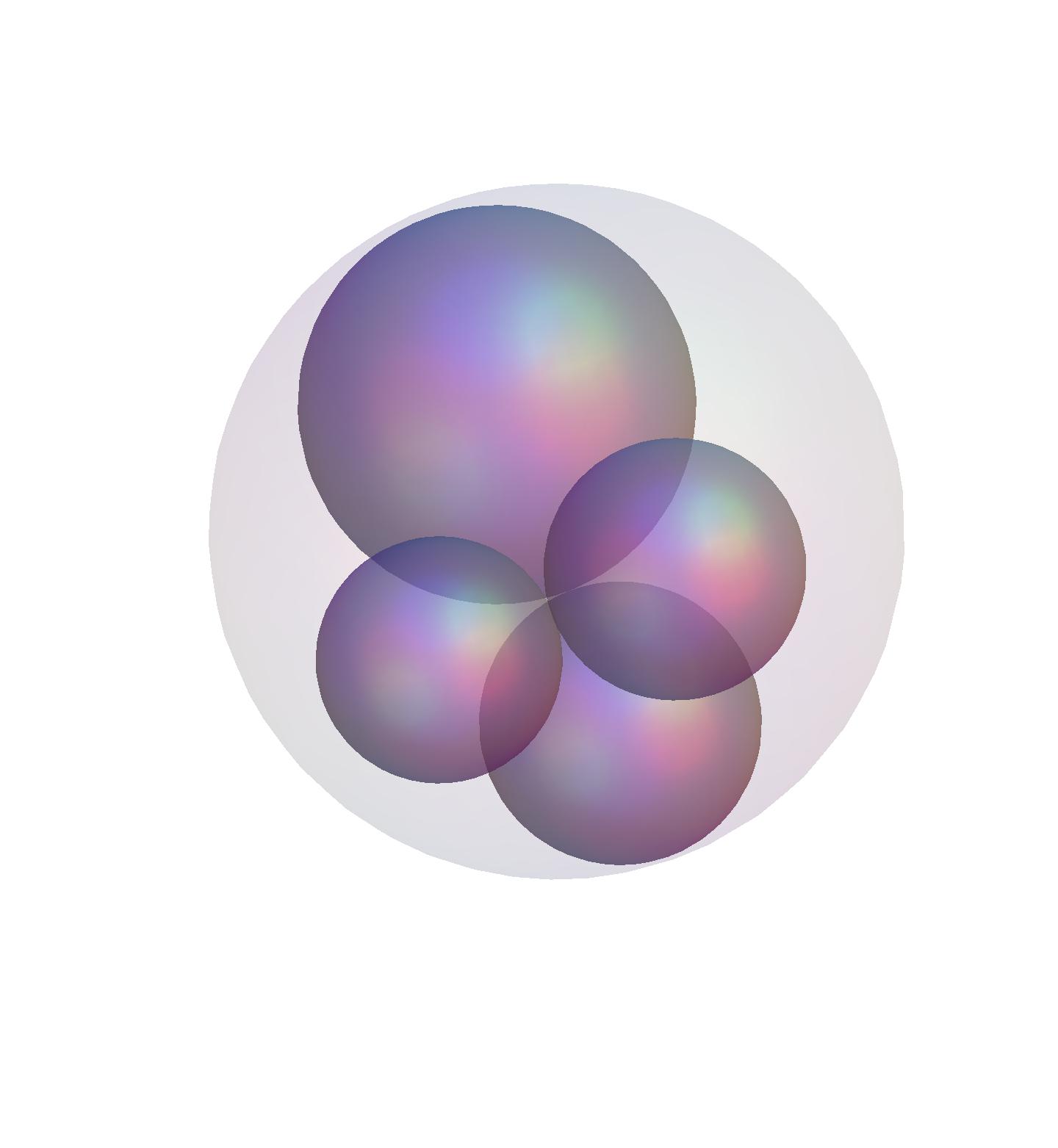}
\hspace{3mm}
\includegraphics[trim = 70px 91px 51px 62px, clip, width=30mm]{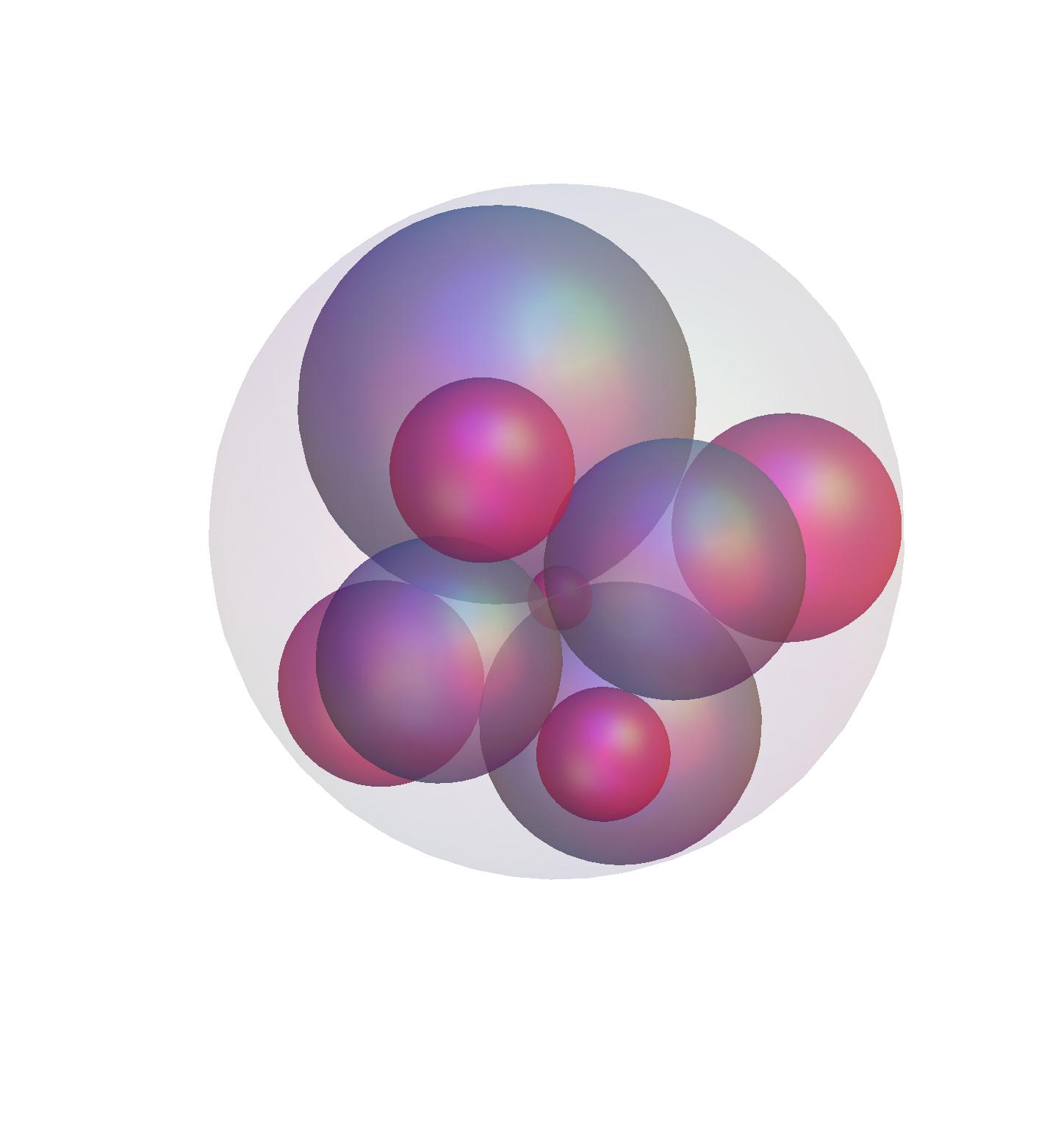}
\hspace{3mm}
\includegraphics[trim = 70px 91px 51px 62px, clip, width=30mm]{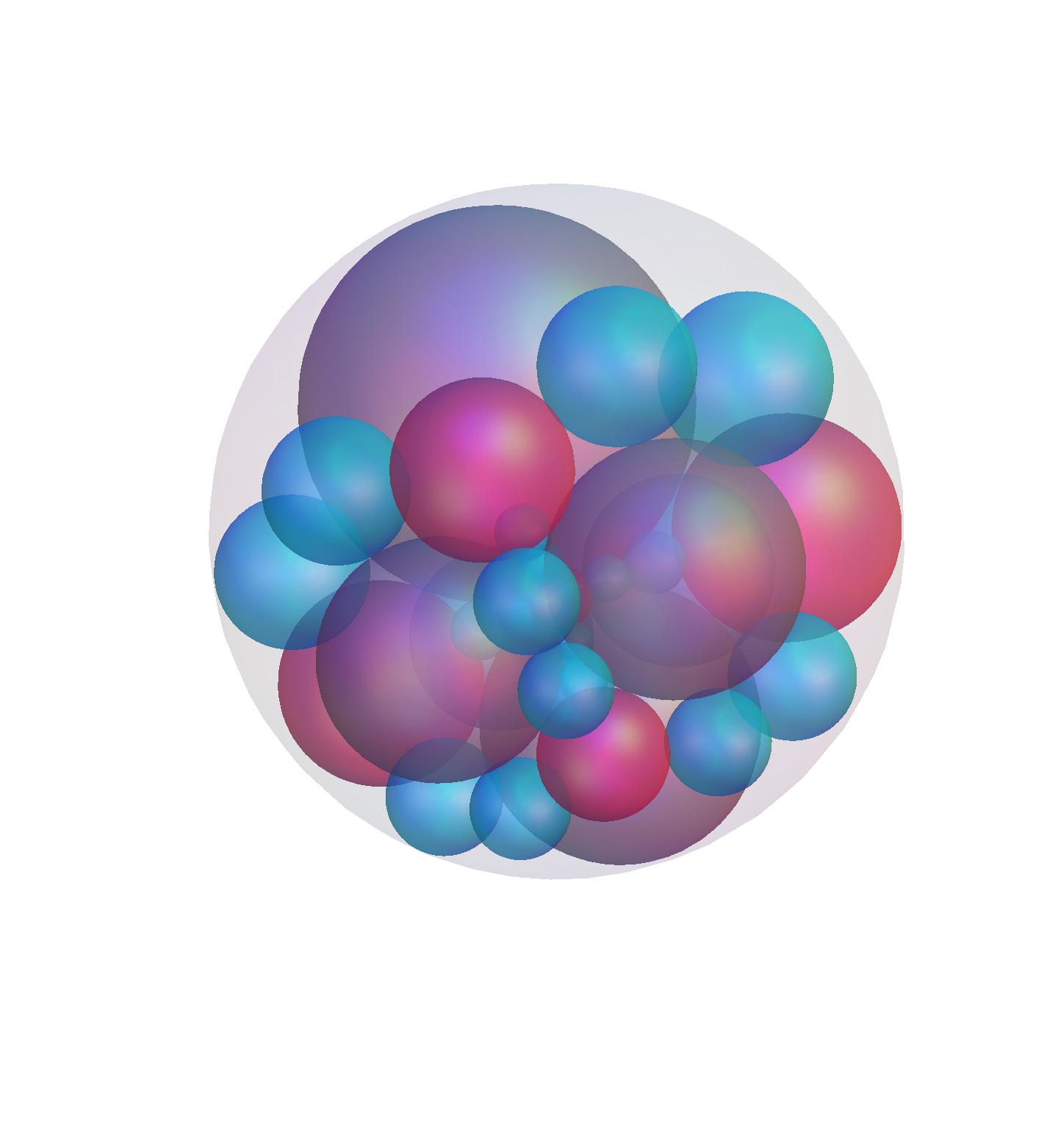}
\vspace{-3mm}
\end{center}
\caption{Construction of an \emph{simplicial} Apollonian sphere packing}
\label{fig:SASP}
\vspace{-0.5mm}
\end{figure}

In this article, we introduce the 3-dimensional analogue of octahedral Apollonian circle packings. Take an \emph{orthoplicial} configuration of eight spheres in $\bbR^3$, having \emph{4-orthoplicial} tangency graph, i.e.\;the 1-skeleton of 4-orthoplex. For any sub-configuration of four pairwise tangent spheres, there exists a unique dual sphere orthogonal to them, and inverting the whole configuration along the dual sphere yields a new orthplicial configuration of eight spheres. We can indefinitely continue this process, as shown in \hyperref[fig:OASP]{Figure~\ref*{fig:OASP}}. We refer to the union of all spheres in this construction as an \emph{orthoplicial} Apollonian sphere packing.
\begin{figure}[h]
\vspace{-0.5mm}
\begin{center}
\includegraphics[trim = 70px 91px 51px 62px, clip, width=30mm]{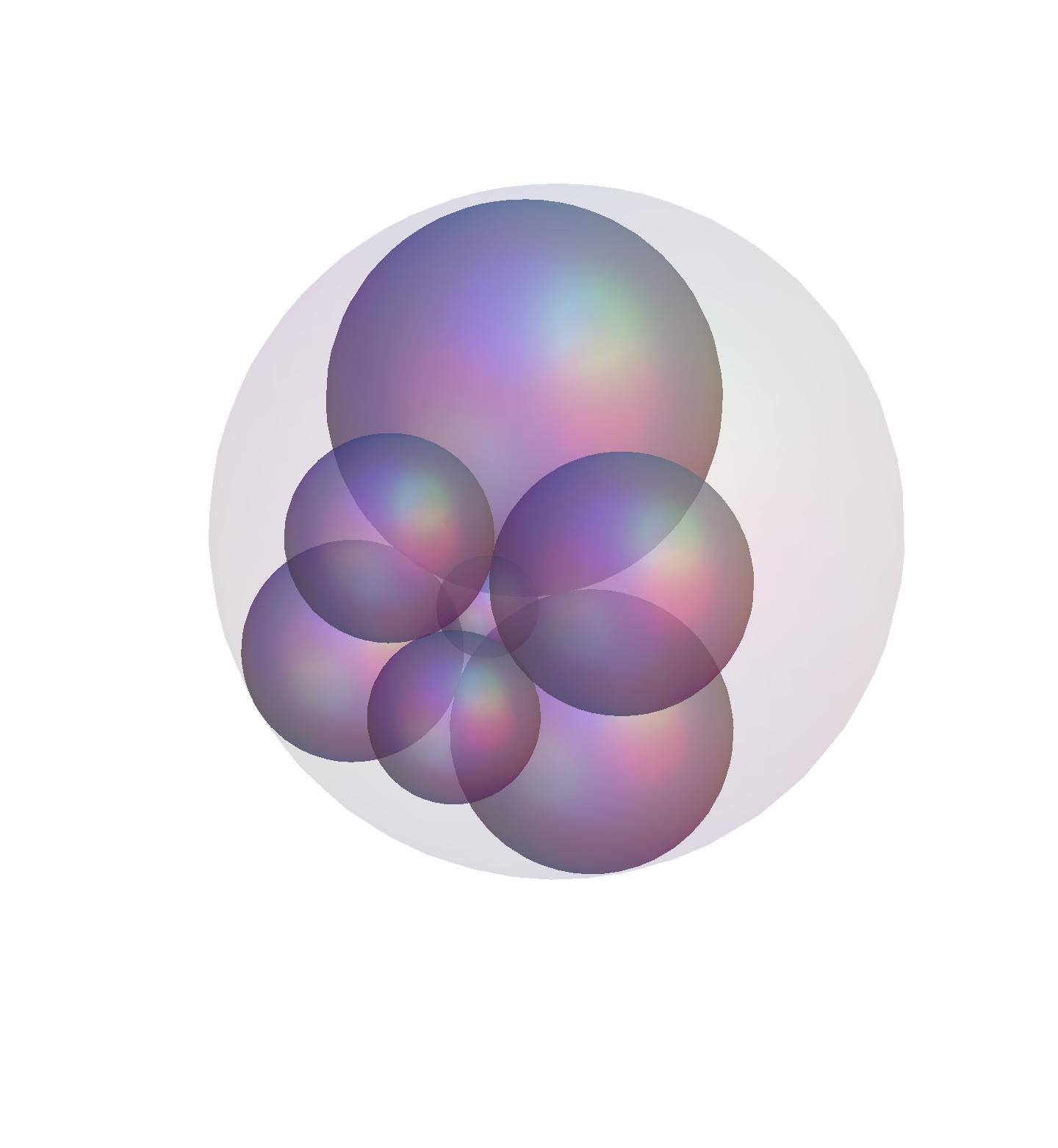}
\hspace{3mm}
\includegraphics[trim = 70px 91px 51px 62px, clip, width=30mm]{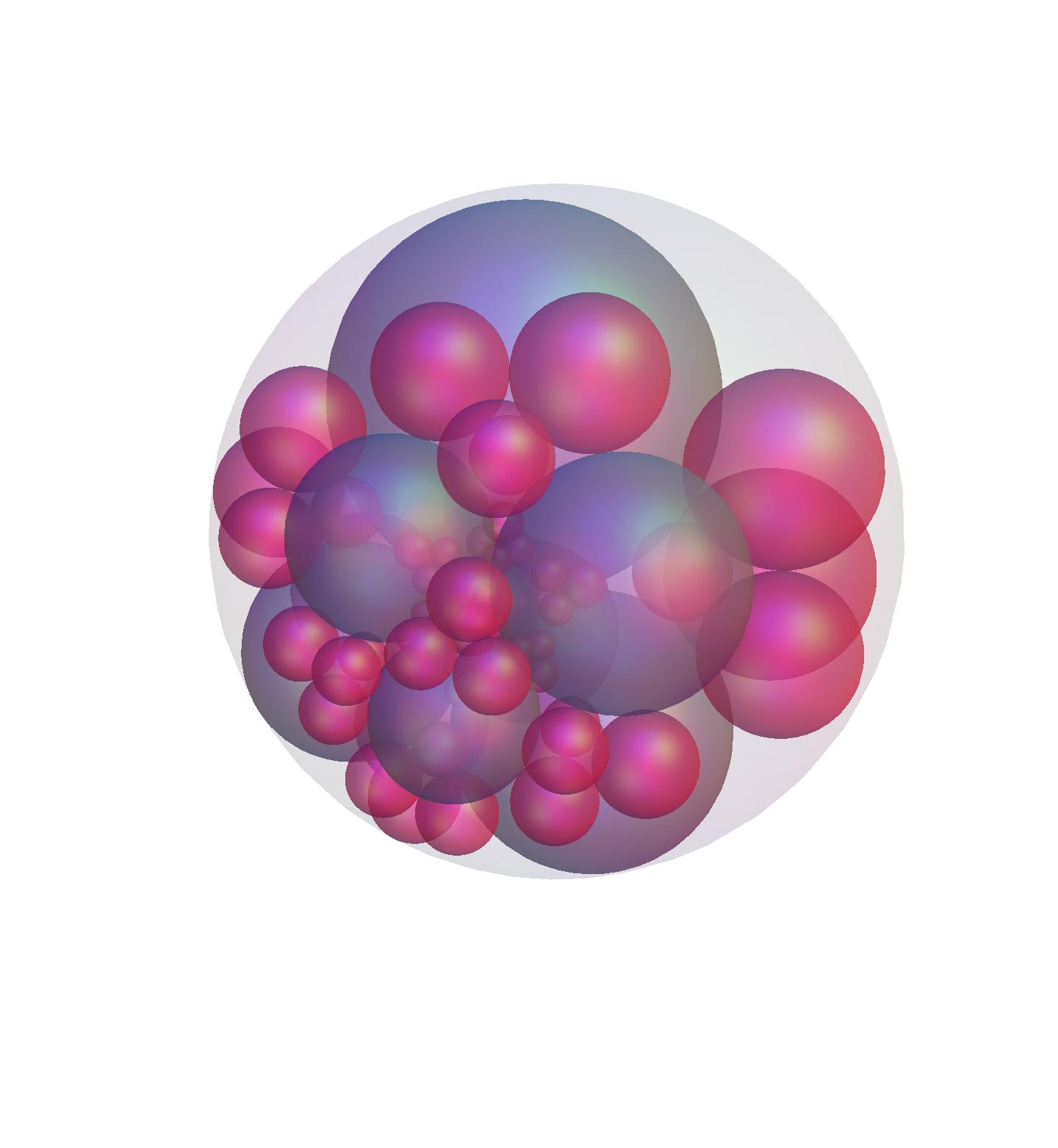}
\hspace{3mm}
\includegraphics[trim = 70px 91px 51px 62px, clip, width=30mm]{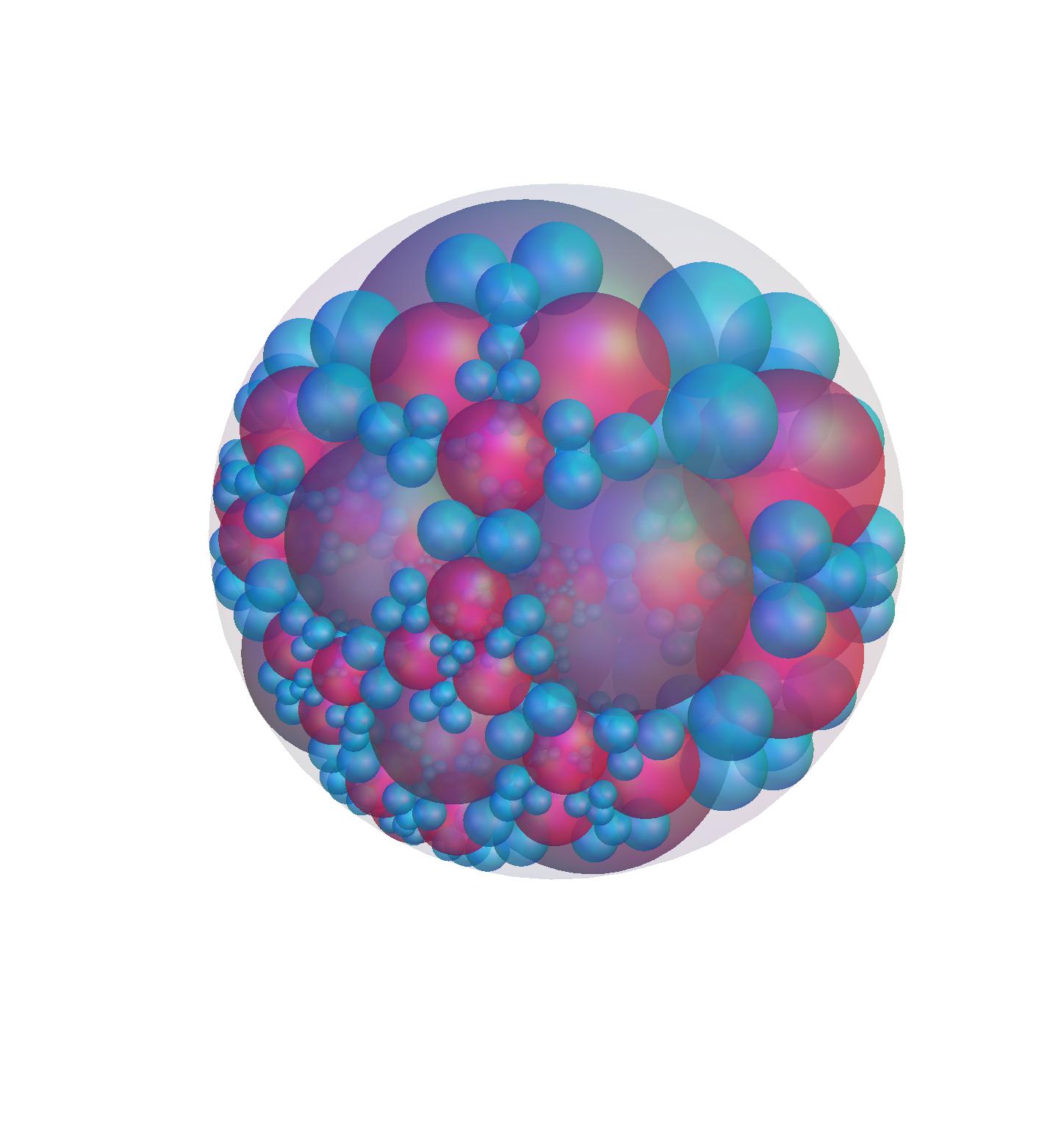}
\vspace{-3mm}
\end{center}
\caption{Construction of an \emph{orthoplicial} Apollonian sphere packing}
\label{fig:OASP}
\vspace{-4mm}
\end{figure}

\subsection{Integral Bends}

Remarkably, there exist tetrahedral/octahedral Apollonian circle packings in which the bends of all circles are integers; we refer to them as \emph{integral} tetrahedral/octahedral Apollonian circle packings. See \hyperref[fig:IntegralTOACP]{Figure~\ref*{fig:IntegralTOACP}} for an example of an integral tetrahedral Apollonian packing (left) and an example of an integral octahedral Apollonian packing (right).
\begin{figure}[h]
\begin{center}
\includegraphics[width=50mm]{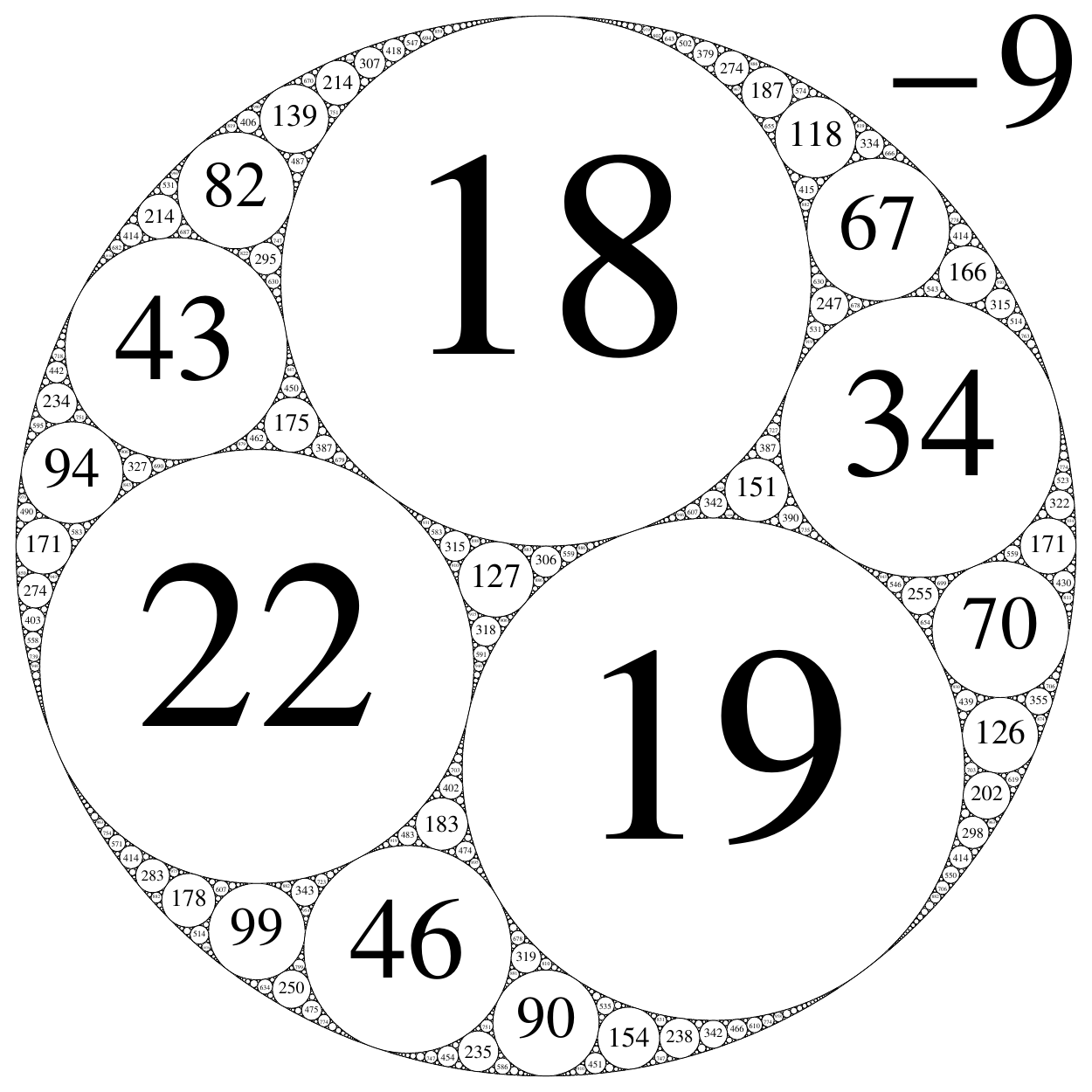}
\hspace{6mm}
\includegraphics[width=50mm]{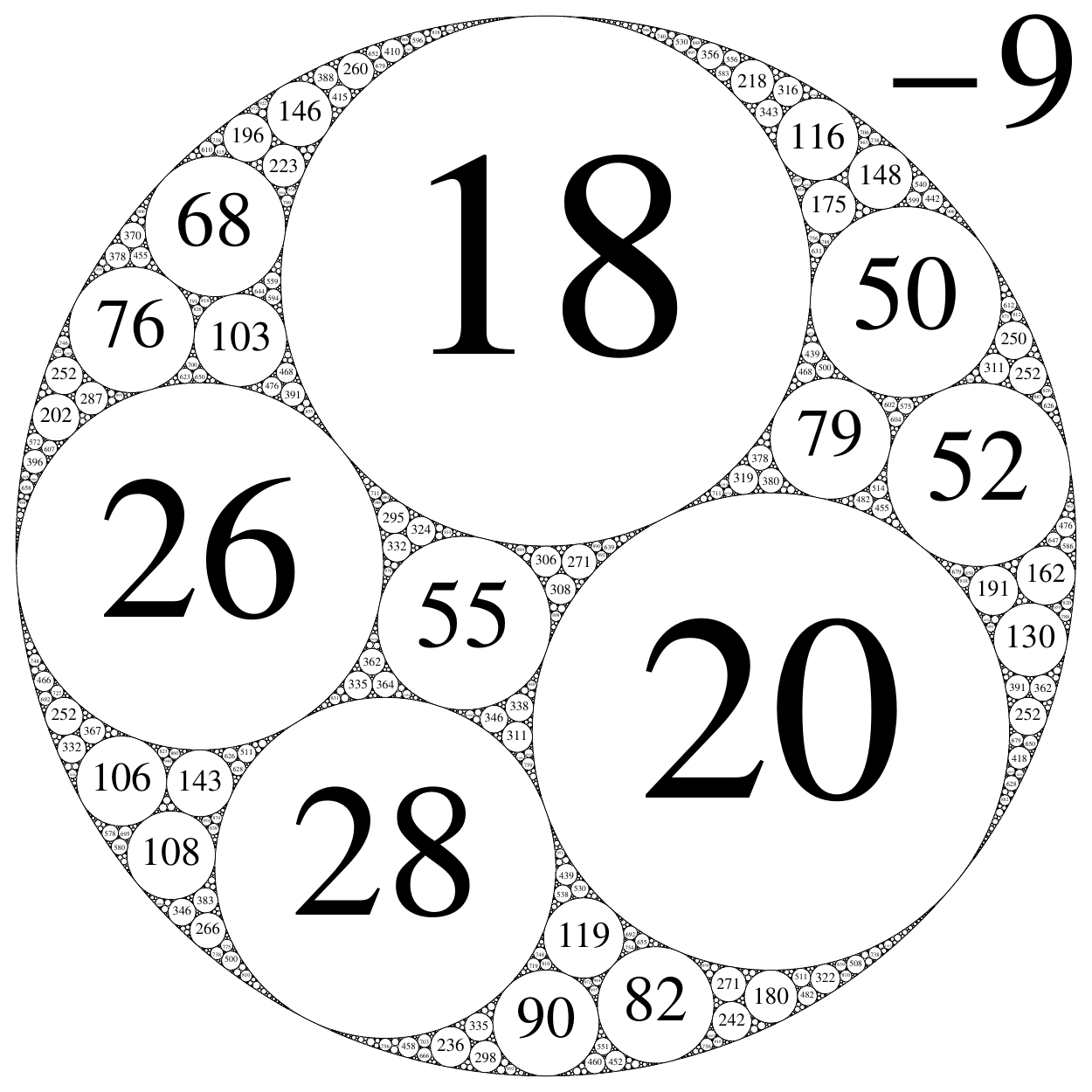}
\end{center}
\caption{Integral Apollonian packings, obtained from a tetrahedral configuration (left) and an octahedral configuration (right)}
\label{fig:IntegralTOACP}
\end{figure}

For tetrahedral Apollonian packings, the existence of integral packings is classically known as a consequence of the Descartes' Circle Theorem in his letter to Princess Elizabeth of Behemia \cite[p.\;45-50]{Descartes}, cf.\;\cite{Steiner}, \cite{Beecroft}, which states that the bends $b_1, \cdots, b_4$ of a tetrahedral configuration of four circles must satisfy
\begin{align*}
2(b_1^2+b_2^2+b_3^2+b_4^2)-(b_1+b_2+b_3+b_4)^2=0.
\end{align*}

For octahedral Apollonian packings, the existence of integral packings is observed by Guettler and Mallows \cite{GM} as a consequence of their theorem which states that the bends $b_1, \cdots, b_6$ of an octahedral configuration of six circles, labeled so that $b_k$ and $b_{k+3}$ are the bends of disjoint circles, must satisfy
\begin{align*}
\begin{matrix}
b_1+b_4=b_2+b_5=b_3+b_6=:2b_\mu, \;\; \text{and}\\
b_\mu^2-2(b_1+b_2+b_3)b_\mu+(b_1^2+b_2^2+b_3^2)=0.
\end{matrix}
\end{align*}

It turns out that there also exist simplicial/orthoplicial Apollonian sphere packings in which the bends of all spheres are integers; we refer to them as \emph{integral} simplicial/orthoplicial Apollonian sphere packings. See \hyperref[fig:IntegralSOASP]{Figure~\ref*{fig:IntegralSOASP}} for an example of an integral simplicial Apollonian packing (left) and an example of an integral orthoplicial Apollonian packing (right).

For simplicial Apollonian packings, the existence of integral packings was observed by Soddy \cite{Soddy} as a consequence of the generalization of the Descartes' Circle Theorem, which states that the bends $b_1,\cdots,b_5$ of a simplicial configuration of five spheres must satisfy
\begin{align*}
3(b_1^2+b_2^2+b_3^2+b_4^2+b_5^2)-(b_1+b_2+b_3+b_4+b_5)^2=0.
\end{align*}
This equation has been known to Descartes himself \cite{Aeppli}; the equation has been rediscovered many times, e.g.\;\cite{Lachlan}, \cite{Soddy}.

\begin{figure}[ht]
\vspace{-0.5mm}
\begin{center}
\includegraphics[trim = 70px 91px 51px 62px, clip, width=50mm]{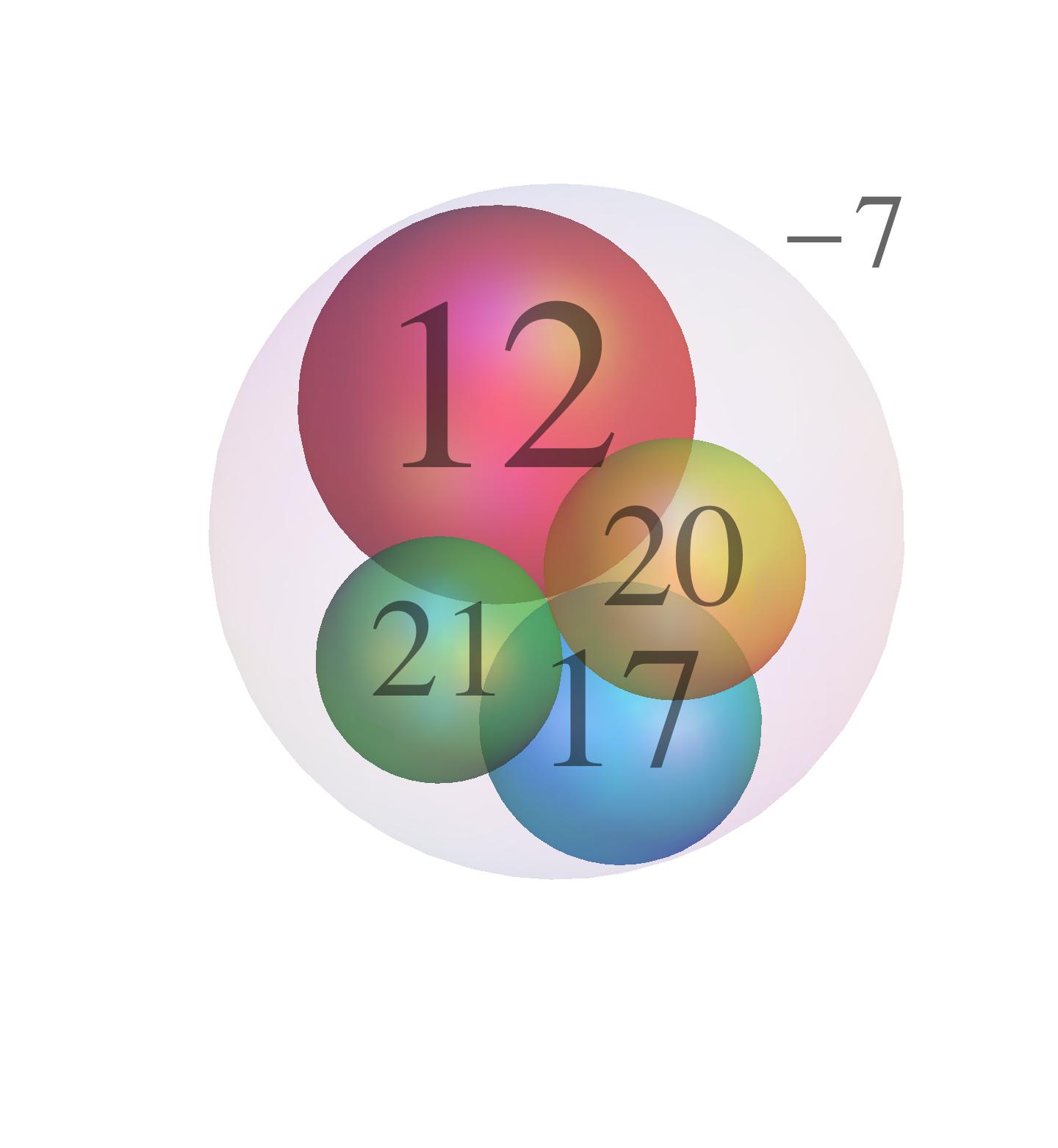}
\hspace{6mm}
\includegraphics[trim = 70px 91px 51px 62px, clip, width=50mm]{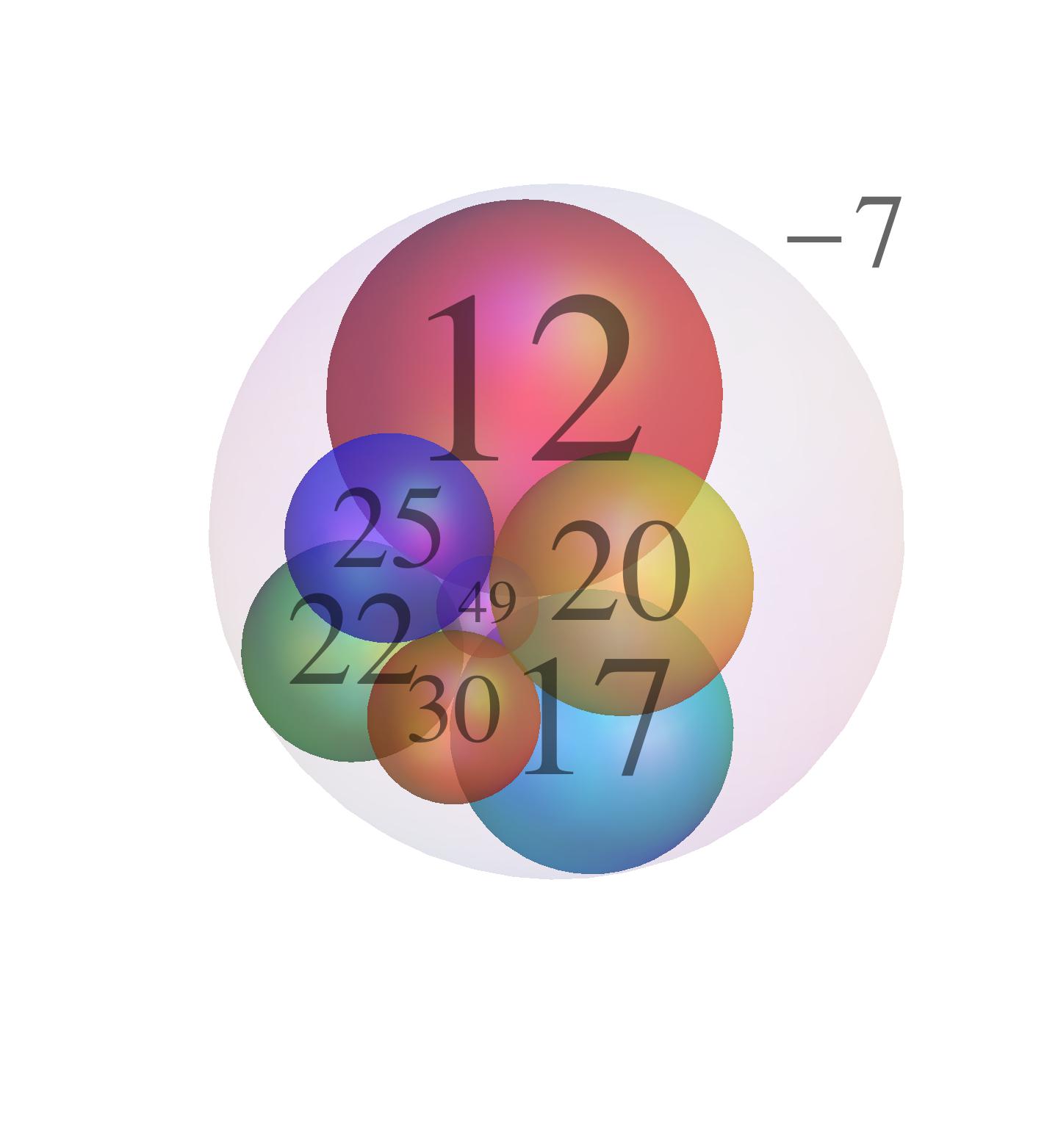}
\\ \vspace{3mm}
\includegraphics[trim = 70px 91px 51px 62px, clip, width=50mm]{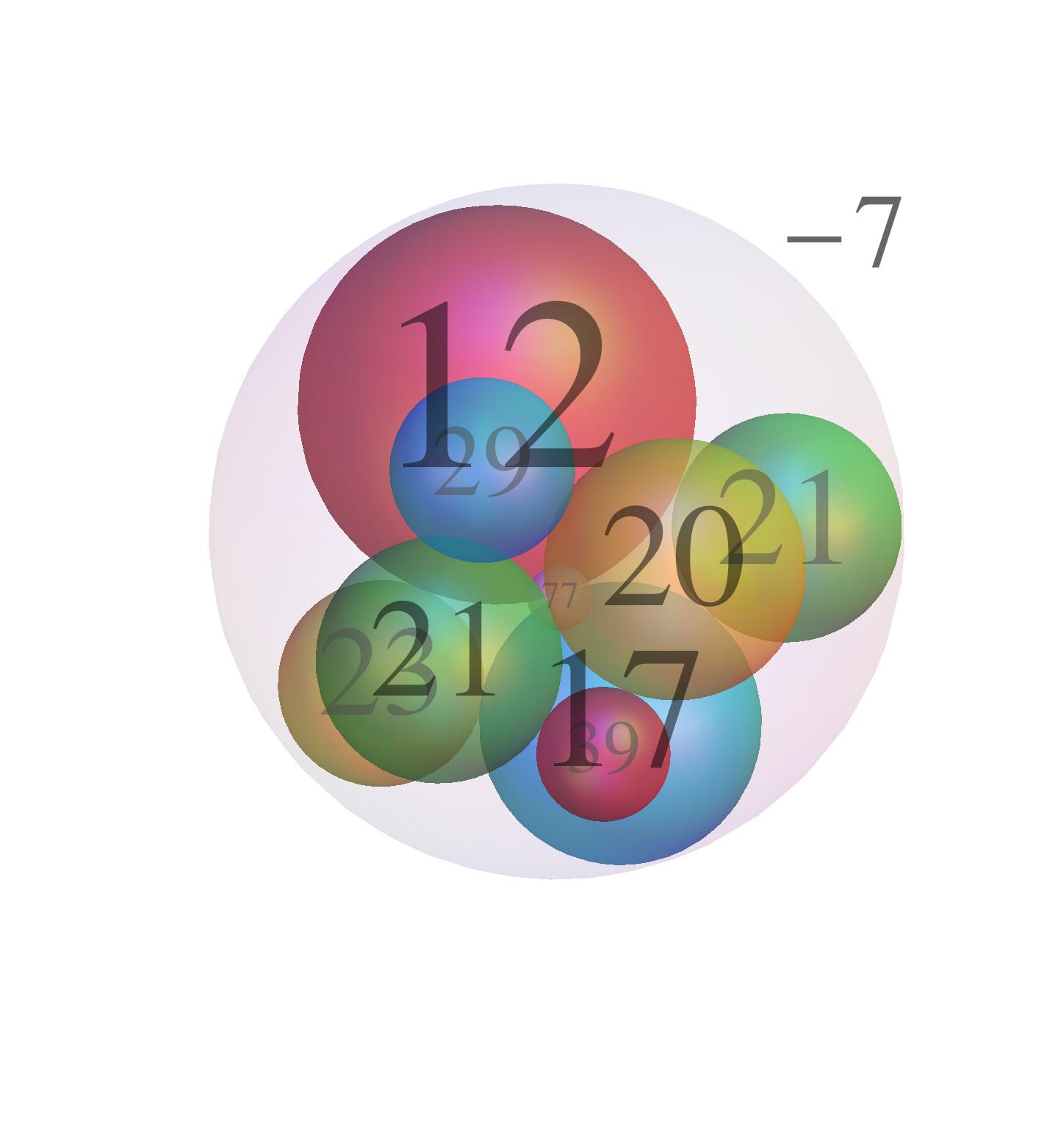}
\hspace{6mm}
\includegraphics[trim = 70px 91px 51px 62px, clip, width=50mm]{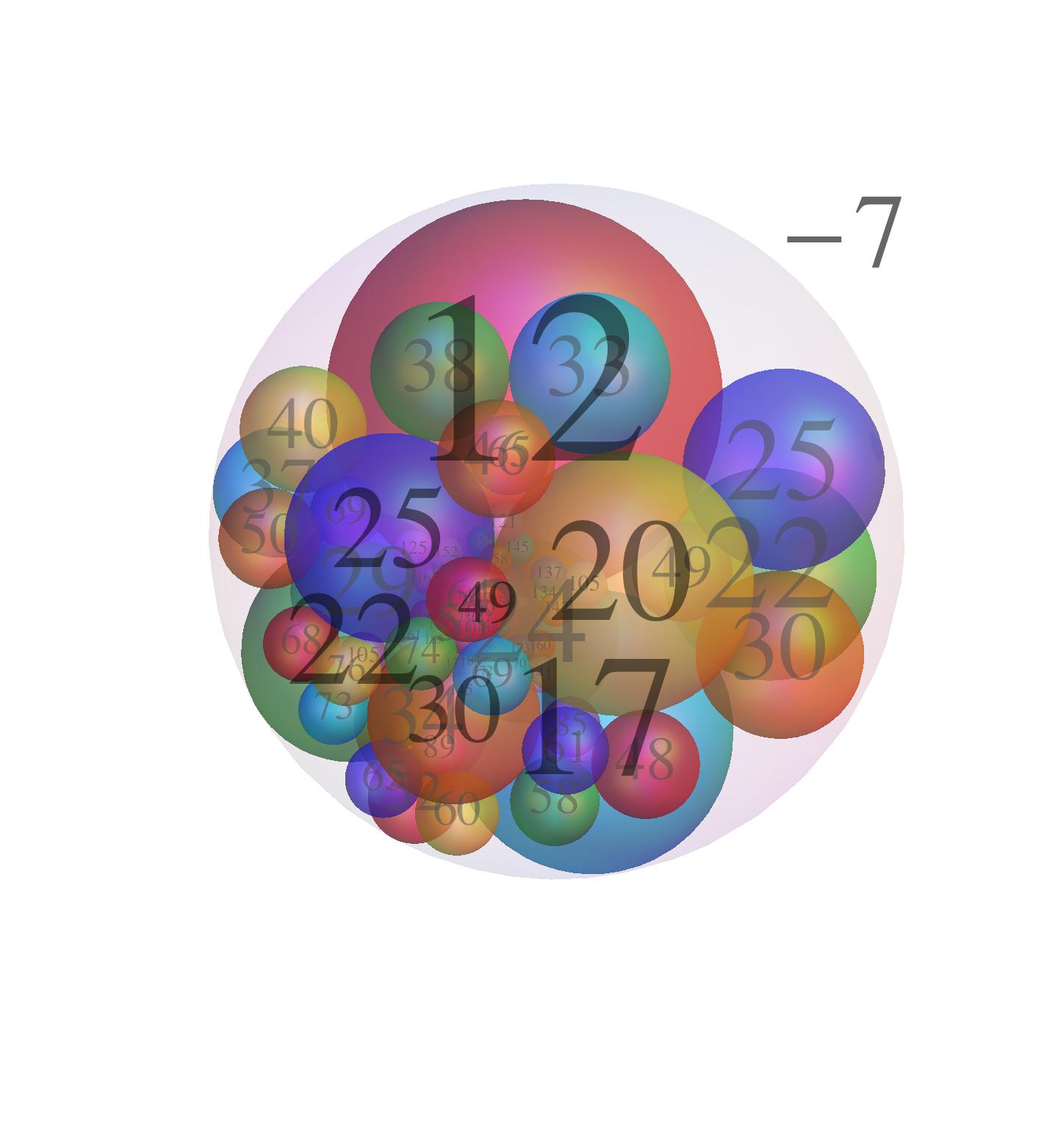}
\\ \vspace{3mm}
\includegraphics[trim = 70px 91px 51px 62px, clip, width=50mm]{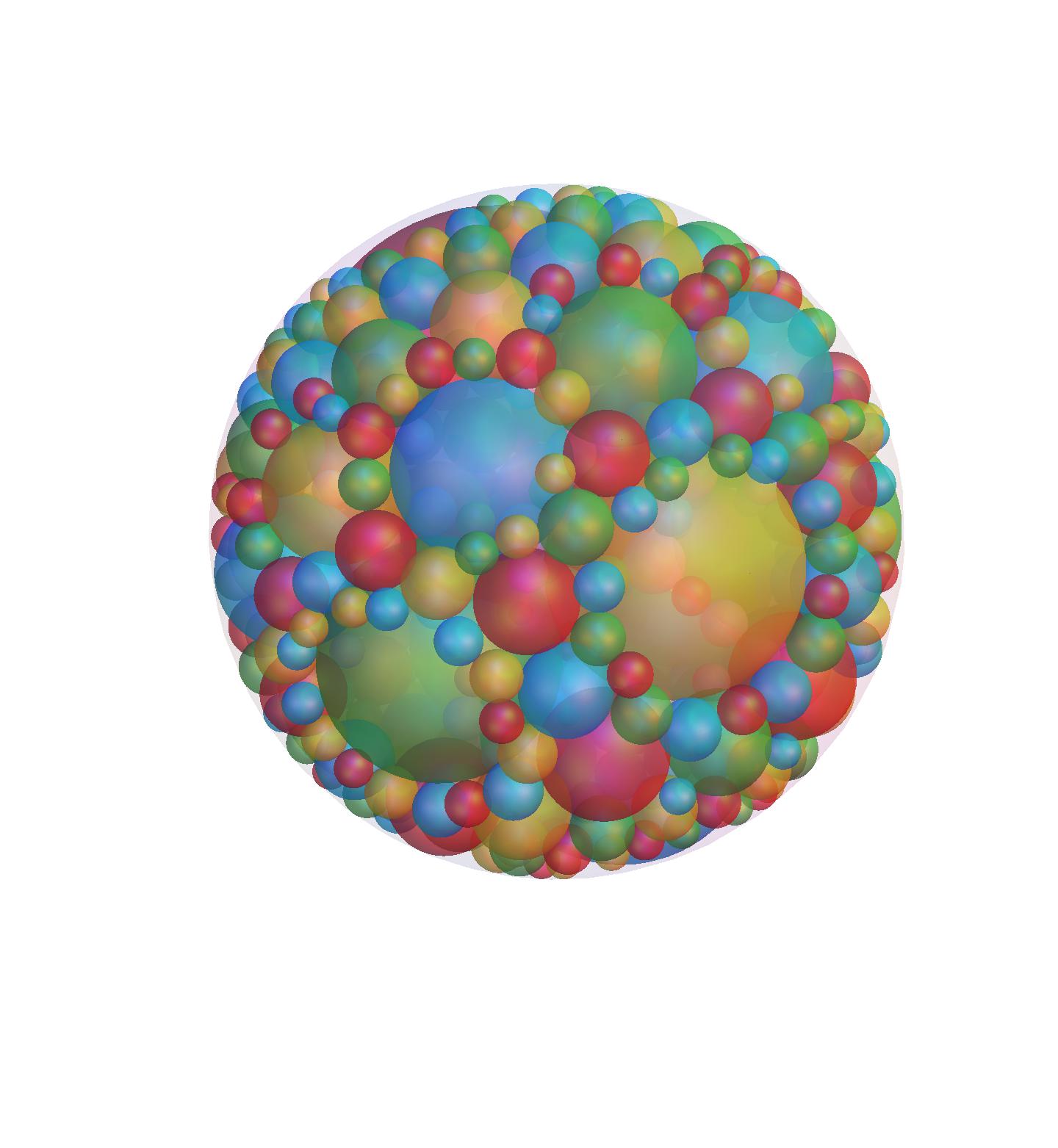}
\hspace{6mm}
\includegraphics[trim = 70px 91px 51px 62px, clip, width=50mm]{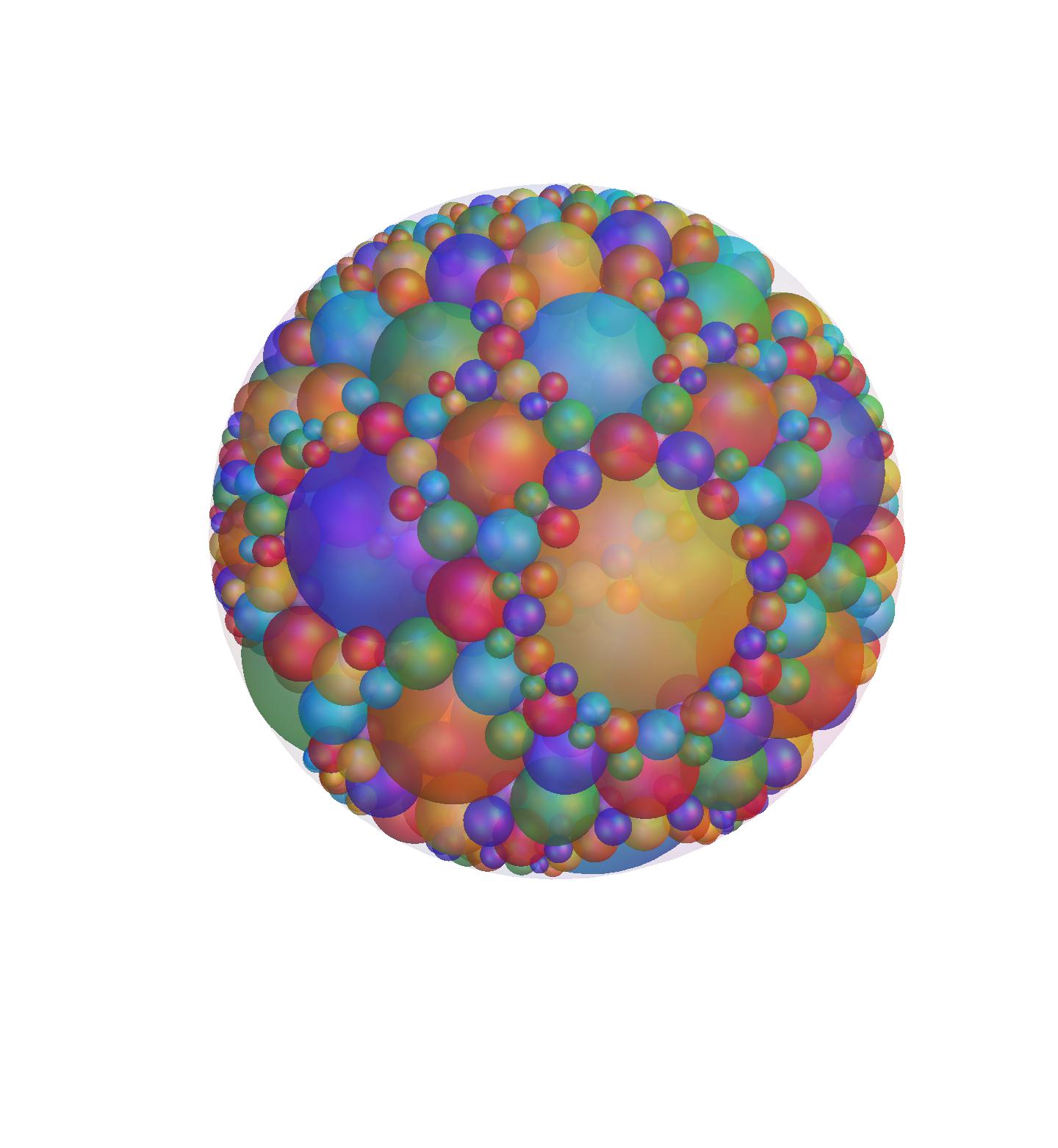}
\end{center}
\caption{Integral Apollonian packings, obtained from a simplicial configuration (left) and an octahedral configuration (right)}
\label{fig:IntegralSOASP}
\vspace{-0.5mm}
\end{figure}

In \hyperref[sec:Platonic]{\S\ref*{sec:Platonic}} and \hyperref[sec:Apollonian]{\S\ref*{sec:Apollonian}}, we establish the existence of integral orthoplicial Apollonian packings, generalizing \cite{GM}. It is a consequence of \hyperref[coro:Antipodal]{Corollary~\ref*{coro:Antipodal}} and \hyperref[thm:DGM]{Theorem~\ref*{thm:DGM}} which imply that the bends $b_1,\cdots,b_8$ of an orthoplicial configuration of eight spheres, labeled so that $b_k$ and $b_{k+4}$ are the bends of disjoint spheres, must satisfy
\begin{align*}
\begin{matrix}
b_1+b_5=b_2+b_6=b_3+b_7=b_4+b_8=:2b_\mu, \;\; \text{and}\\
2b_\mu^2-2(b_1+b_2+b_3+b_4)b_\mu+(b_1^2+b_2^2+b_3^2+b_4^2)=0.
\end{matrix}
\end{align*}

\subsection{Local to Global}

Recently, there have been remarkable advances in understanding the diophantine properties of the set  or the multi-set of integers occurring as bends in integral Apollonian circle/sphere packings. In the foundational work \cite{GLMWY:NT1}, Graham, Lagarias, Mallows, Wilks, and Yan posed several fundamental questions on tetrahedral Apollonian circle packings in this direction. Many of them are now resolved, fully or partially; see \cite{S:Letter}, \cite{BF:Positive}, \cite{B:Prime}, \cite{BK:Strong} on the set of integral bends, and \cite{KO:Count}, \cite{LO:Count} on the multi-set of integral bends. Subsequently, analogous questions on the set of integral bends were studied for simplicial Apolonian sphere packings in \cite{K:Soddy} and for octahedral Apollonian circle packings in \cite{Z:Octahedral}. In this article, we address the diophantine properties of the set of integers occurring as bends in integral orthoplicial Apollonian packings. 

Given an integral orthoplicial Apollonian packings, let us write $\scrB(\scrP)$ for the set of integers appearing as bends in $\scrP$. Let us assume that $\scrP$ is primitive, i.e. $\gcd\scrB(\scrP)=1$. To give a heuristic idea on which integers may arise in $\scrB(\scrP)$, let us present an example. For the integral orthoplicial Apollonian packing shown in \hyperref[fig:IntegralSOASP]{Figure~\ref*{fig:IntegralSOASP}}, explicit computations on small integers in $\scrB(\scrP)$ yield
\begin{align*}
\scrB(\scrP)&=
\left\{\begin{matrix}
-7, 12, 17, 20, 22, 24, 25, 29, 30, 33, 34, 37, 38, 40, 41, 44, 46, \\
48, 49, 50, 52, 53, 54, 56, 58, 60, 61, 62, 64, 65, 66, 68, 69, \cdots, \\
200, 201, 202, 204, 205, 206, 208, 209, 210, 212, 213, 214, 216, \\
217, 218, 220, 221, 222, 224, 225, 226, 228, 229, 230, 232, 233, \\
234, 236, 237, 238, 240, 241, 242, 244, 245, 246, 248, 249, \cdots\;\;
\end{matrix}\right\}.
\end{align*}
Looking at these numbers, we immediately observe that $\scrB(\scrP)$ seems to contain all sufficiently large integers $n \equiv 0,1,2 \pmod{4}$ but no integers $n \equiv 3 \pmod{4}$. Generally, for any primitive orthoplicial Apollonian packing $\scrP$, it appears that $\scrB(\scrP)$ contains all sufficiently large integers in $\scrA(\scrP):=\{ n\in \bbZ \mid n \not\equiv -\varepsilon(\scrP) \mod{4} \}$ but no integers outside $\scrA(\scrP)$, where $\varepsilon(\scrP)\in\{\pm1\}$ depends only on $\scrP$. These observations suggests the following statements, phrased in analogy with Hilbert's 11th problem on representations of integers by quadratic forms: 
\begin{itemize}
\item[(a)] there is a local obstruction modulo 4 as above,
\item[(b)] this obstruction modulo 4 is the only local obstruction, and
\item[(c)] sufficiently large locally represented integer is globally represented.
\end{itemize}

In \hyperref[sec:LG]{\S\ref*{sec:LG}}, we establish these statements. Our approach is similar to \cite{K:Soddy}; namely, adapting the ideas of Sarnak \cite{S:Letter}, we use a large arithmetic group to relate the integers represented in $\scrB(\scrP)$ with integers represented by a certain quaternary quadratic form. This quadratic form turns out to be positive-definite and \emph{isotropic at every prime}, allowing us to employ the classical result by Kloosterman on quaternary quadratic forms and prove our main result:

\begin{thm-LG}
Every primitive orthoplicial Apollonian sphere packing $\scrP$ satisfy the asymptotic local-global principle: there is an effectively and explicitly computable bound $N(\scrP)$ so that, if $n>N(\scrP)$ and $n \in \scrA(\scrP)$, then $n \in \scrB(\scrP)$.
\end{thm-LG}

\subsection*{Acknowledgement}

The author would like to thank Elena Fuchs for her enlightening talk on her results on classical Apollonian packings which introduced the author to the subject, and Igor Rivin for insightful discussions on various aspects of circle packings and sphere packings. Some of the results presented here were obtained while the author was at Temple University.

\section{Preliminaries}

\subsection{Inversive Spheres}

We write $\Euc^3$ for the euclidean 3-space; we choose a frame and a point as the origin to coordinatize $\Euc^3$ as $\bbR^3$. We work with the coordinatized \emph{M\"obius 3-space} $\hat\bbR^3=\bbR^3 \cup\{\infty\}$ and the Euclidean subspace $\bbR^3 \subset \hat\bbR^3$.

An \emph{inversive sphere} $S$ in the Euclidean 3-space $\Euc^3$ is either a sphere or a plane in $\Euc^3$; we say $S$ is \emph{honest} if $S$ is a sphere, and $S$ is \emph{planar} if $S$ is a plane. As usual, planes are regarded as spheres through the point at infinity, and parallel planes are considered to be tangent at infinity. An \emph{orientation} of an inversive sphere is a choice of unit normal field $\bsn$ on it, or equivalently a choice of one region $B \subset \Euc^3$ with $\partial B=S$; by convention, the \emph{orienting normal} $\bsn$ points into the \emph{orienting region} $B$. The orienting region may be a ball or a ball-complement (if $S$ is honest), or a half-space (if $S$ is planar). For every inversive sphere $S$, its \emph{bend} $b=b(S)$ is defined as follows. If $S$ is an honest sphere, we set $b:=1/r$ where $r$ is the \emph{oriented radius}, defined to be a non-zero real number such that (i) $|r|$ is the radius of the sphere, (ii) $r>0$ if the orienting region is a ball, and (iii) $r<0$ if the orienting region is a ball-complement. If $S$ is planar, we set $b:=0$. The bend is often called the \emph{oriented/signed curvature}; we will use the term ``bend'' in order to avoid the double meaning in the phrase ``negative curvature''. For the sake of brevity, in the rest of the article, a \emph{sphere} always means an \emph{oriented inversive sphere} unless stated otherwise. For the most of the article, we work mainly with honest spheres with positive bends, and little confusion should arise.

An oriented inversive sphere $S$ is specified uniquely and unambiguously by its \emph{inversive coordinates} \cite[\S9]{Wilker:Inversive}; see also \cite{LMW}, \cite{GLMWY:GG3}. By convention, we will always regard inversive coordinate vectors as row vectors, and we will usually denote them by $\bsv(S)=(a,b,\hat x, \hat y, \hat z)$. If an oriented inversive sphere $S$ is an honest sphere with the center $\bsc=(c_x,c_y,c_z)$ and the oriented radius $r$, the inversive coordinate vector of $S$ is defined to be the vector
\[
\bsv(S)=(a,b,\hat x, \hat y, \hat z):=(a,b,bc_x,bc_y,bc_z)
\]
where $b=1/r$ is the bend of $S$ and $a$ is the \emph{augmented bend} of $S$, which is defined to be the bend of the sphere obtained by inverting $S$ about the unit sphere centered at the origin. The augmented bend is given explicitly by $a=b|\bsc|-1/b$. If an oriented inversive sphere $S$ is planar, take the linear equation $n_xx+n_yy+n_zz=h$ for the plane, where $\bsn=(n_x,n_y,n_z)$ is the orientation unit normal vector to the plane. Then, the inversive coordinate vector of $S$ is defined to be
\[
\bsv(S)=(a,b,\hat x, \hat y, \hat z):=(2h,0,n_x,n_y,n_z).
\]
This coordinate vector can also be obtained as the limit of the honest sphere case.

\subsection{Inversive Product}

The \emph{inversive product} is one of the most essential notions in inversive geometry. Let us first define two matrices.
\[
\bsQSigma:=\medpmatrix{
0&-\frac{1}{2}&0&0&0\\
-\frac{1}{2}&0&0&0&0\\
0&0&1&0&0\\
0&0&0&1&0\\
0&0&0&0&1
},
\qquad
\bsQW:=2\bsQSigma^{-1}=\medpmatrix{
0&-4&0&0&0\\
-4&0&0&0&0\\
0&0&2&0&0\\
0&0&0&2&0\\
0&0&0&0&2
}.
\]
The matrix $\bsQW$ is the \emph{Wilker matrix} in \cite{LMW}, \cite{GLMWY:GG3}, and it is instrumental in studying sphere packings. The matrix $\bsQSigma$ defines the \emph{inversive product}.

\begin{defn}
The \emph{inversive product} is an indefinite symmetric bilinear form $\varSigma$ on $\bbR^5$, and in particular on inversive coordinate vectors, given by
\[
\varSigma(\bsv_1,\bsv_2):=\bsv_1 \bsQSigma \bsv_2.
\]
\end{defn}

\begin{rem}
In \cite{Wilker:Inversive}, this bilinear form is derived from the standard indefinite inner product on $\bbR^{4,1}$ and denoted as a ``product'' $\bsv_1 * \bsv_2$. For honest spheres $S_1,S_2$, the quantity $\varSigma(\bsv(S_1),\bsv(S_2))$ appeared earlier with the definition directly in terms of radii and the centers of $S_1,S_2$; it is called the \emph{separation} and denoted by $\varDelta(S_1,S_2)$ in \cite{Boyd:Osculatory}, and its negative is called the \emph{inclination} and denoted by $\gamma$ in \cite{Mauldon}. Closely related concepts can be traced back to \cite{Clifford}, \cite{Darboux}, \cite{Lachlan}.
\end{rem}

Identifying the M\"obius 3-space $\hat\bbR^3$ with the boundary $\partial\Hyp^4$ of the upper half-space model of the hyperbolic 4-space $\Hyp^4$, each oriented inversive sphere $S$ can be regarded as the boundary of an oriented 3-dimensional hyperbolic hyperplane $H$ which cuts out a 4-dimensional hyperbolic half-space whose limit at infinity is the orienting region $B$ bounded by $S$. Given two oriented inversive spheres $S_1,S_2$ with the inversive coordinate vectors $\bsv_1,\bsv_2$, the inversive product $\varSigma(\bsv_1,\bsv_2)$ encodes the quantitative data of their relative positions \cite{Wilker:Inversive} as follows.

Inversive spheres $S_1,S_2$ intersects if and only if the corresponding hyperplanes $H_1,H_2$ intersects in $\Hyp^4$. 
The relative position of $S_1,S_2$ is captured by the angle $\theta$ between them, measured in the symmetric difference $B_1 \triangle B_2$ of the orienting regions; this angle coincides with the dihedral angle between oriented hyperbolic hyperplanes $H_1,H_2$, measured in the symmetric difference of the corresponding hyperbolic halfspaces. When $S_1,S_2$ intersects, we have
\[
\varSigma(\bsv_1,\bsv_2)=\cos\theta.
\]

Inversive spheres $S_1,S_2$ are tangent if and only if the corresponding hyperplanes $H_1,H_2$ are tangent at $\partial\Hyp^4$. 
$S_1,S_2$ are said to be \emph{nested} or \emph{internally tangent} if the orienting regions $B_1,B_2$ are nested, and said to be \emph{not nested} or \emph{externally tangent} if $B_1,B_2$ are not nested. When $S_1,S_2$ are tangent, we have
\[
\varSigma(\bsv_1,\bsv_2)=\begin{cases}
+1&\text{if $S_1,S_2$ are nested,}\\
-1&\text{if $S_1,S_2$ are not nested.}\\
\end{cases}
\]

Inversive spheres $S_1,S_2$ are disjoint if and only if the corresponding hyperplanes $H_1,H_2$ are disjoint in $\Hyp^4\cup\partial\Hyp^4$. 
The relative position of $S_1,S_2$ is captured by the hyperbolic distance $\delta$ between $H_1,H_2$. $S_1,S_2$ are said to be \emph{nested} or \emph{internally disjoint} if the orienting regions $B_1,B_2$ are nested, and said to be \emph{not nested} or \emph{externally disjoint} if $B_1,B_2$ are not nested. When $S_1,S_2$ are disjoint, we have
\[
\varSigma(\bsv_1,\bsv_2)=\begin{cases}
+\cosh \delta &\text{if $S_1,S_2$ are nested,}\\
-\cosh \delta &\text{if $S_1,S_2$ are not nested.}\\
\end{cases}
\]

\subsection{M\"obius Group Action}

The \emph{M\"obius group} $\Mob_3^\pm=\Mob^\pm(\hat\bbR^3)$ is defined to be the group of conformal/anti-conformal transformation on the M\"obius 3-space $\hat\bbR^3$. We write $\Mob_3^+=\Mob^+(\hat\bbR^3)$ for the subgroup of conformal transformations. By the classical theorem of Liouville, it is well-known that $\Mob_3^\pm$ is generated by inversions along spheres, and $\Mob_3^+$ is generated by rescaling, translations, and rotations. All sphere inversions are conjugates of the \emph{standard inversion} along the standard unit sphere by some element of $\Mob_3^+$.

The M\"obius group acts naturally on inversive coordinate vectors via a representation as a $5\times5$ matrix group, acting on the \emph{right} of inversive coordinate vectors by matrix multiplication. The matrices for the standard inversion, rescaling, translations, and rotations are written down in \cite{GLMWY:GG3}; we recall these matrices below.

\begin{itemize}
\item The standard inversion is represented by
\[
\medpmatrix{
0&1&0&0&0\\
1&0&0&0&0\\
0&0&1&0&0\\
0&0&0&1&0\\
0&0&0&0&1
}.
\]
\item The rescaling by the factor $t$ is represented by
\[
\medpmatrix{
t&0&0&0&0\\
0&1/t&0&0&0\\
0&0&1&0&0\\
0&0&0&1&0\\
0&0&0&0&1
}.
\]
\item The translation by $\bsw=(x,y,z)$ is represented by
\[
\medpmatrix{
1&0&0&0&0\\
x^2+y^2+z^2&1&x&y&z\\
2x&0&1&0&0\\
2y&0&0&1&0\\
2z&0&0&0&1
}.
\]
\item The rotation about the unit vector $(x,y,z)$ by the angle $\theta$ is represented by
\[
\hspace{10mm}\bsR=\medpmatrix{
1&0&0&0&0\\
0&1&0&0&0\\
0&0&x^2(1-\cos\theta)+\cos\theta&xy(1-\cos\theta)+z\cos\theta&xz(1-\cos\theta)-y\cos\theta\\
0&0&yx(1-\cos\theta)-z\cos\theta&y^2(1-\cos\theta)+\cos\theta&yz(1-\cos\theta)+x\cos\theta\\
0&0&zx(1-\cos\theta)+y\cos\theta&zy(1-\cos\theta)-x\cos\theta&z^2(1-\cos\theta)+\cos\theta
}.
\]
i.e.\;rotation matrices $\bsR \in \SO_3(\bbR)$ (acting on the right of the row vectors) are embedded in $5\times5$ matrix as the lower right $3\times3$ minors.
\end{itemize}

Since the standard inversion, rescaling, translations, and rotations generate the M\"obius group $\Mob_3^\pm$, these matrices specify the representation. From now on, we identify $\Mob_3^\pm$ with the $5\times5$ matrix group generated by these matrices, which is precisely the image of $\Mob_3^\pm$ under this representation.

One of the most important features of the inversive product $\varSigma$ is the invariance under the action of the M\"obius group $\Mob_3^\pm$. Identifying the M\"obius 3-space $\hat\bbR^3$ with the boundary of the hyperbolic 4-space $\Hyp^4$, the conformal/anti-conformal action of $\Mob_3^\pm$ on $\hat\bbR^3$ extends to the isometric action of $\Mob_3^\pm$ on $\Hyp^4$. Hence, with the concrete interpretation of the inversive product in terms of the angle and the hyperbolic distances, the invariance of $\varSigma$ under $\Mob_3^\pm$-action is intuitively obvious.

\begin{lem} \label{lem:MobiusInvariance}
If a $5\times5$ matrix $\bsM$ represents a M\"obius transformation via the representation above, we have
\[
\bsM\bsQSigma\bsM^\trans=\bsQSigma, \quad \bsM^\trans\bsQW\bsM=\bsQW. 
\]
In particular, the inversive product is invariant under the M\"obius group action; namely, if $\bsv_1$ and $\bsv_2$  are inversive coordinates of inversive spheres and $\bsM$ represents a M\"obius transformation, then $\varSigma(\bsv_1\bsM,\bsv_2\bsM)=\varSigma(\bsv_1,\bsv_2)$.
\end{lem}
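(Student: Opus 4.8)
The plan is to verify the matrix identity $\bsM\bsQSigma\bsM^\trans=\bsQSigma$ on the generators of $\Mob_3^\pm$ and then propagate it to the whole group by a short functorial argument; the second identity $\bsM^\trans\bsQW\bsM=\bsQW$ is then automatic since $\bsQW=2\bsQSigma^{-1}$. First I would observe that the set $G_0:=\{\bsM\in GL_5(\bbR)\mid \bsM\bsQSigma\bsM^\trans=\bsQSigma\}$ is a group: it clearly contains the identity, it is closed under inverses (conjugate the defining relation by $\bsM^{-1}$ on the left and $(\bsM^\trans)^{-1}=(\bsM^{-1})^\trans$ on the right), and it is closed under products (if $\bsM_1,\bsM_2\in G_0$ then $\bsM_1\bsM_2\bsQSigma(\bsM_1\bsM_2)^\trans=\bsM_1(\bsM_2\bsQSigma\bsM_2^\trans)\bsM_1^\trans=\bsM_1\bsQSigma\bsM_1^\trans=\bsQSigma$). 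Since, by Liouville's theorem as recalled above, $\Mob_3^\pm$ is generated (in this $5\times5$ representation) by the standard inversion, the rescalings, the translations, and the rotations, it suffices to check that each of these four families of matrices lies in $G_0$.

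Each of these checks is a direct computation with the explicit matrices listed in the excerpt, using the block structure of $\bsQSigma=\begin{pmatrix}\bsJ&\bszero\\\bszero&\bsI_3\end{pmatrix}$ with $\bsJ=\left(\begin{smallmatrix}0&-\tfrac12\\-\tfrac12&0\end{smallmatrix}\right)$. For the standard inversion $\bsM$, conjugating $\bsQSigma$ by the permutation that swaps the first two coordinates fixes $\bsJ$ and fixes $\bsI_3$, so $\bsM\bsQSigma\bsM^\trans=\bsQSigma$. For the rescaling by $t$, the upper-left block becomes $\left(\begin{smallmatrix}t&0\\0&1/t\end{smallmatrix}\right)\bsJ\left(\begin{smallmatrix}t&0\\0&1/t\end{smallmatrix}\right)=\bsJ$ since the off-diagonal entry scales by $t\cdot(1/t)=1$, and the lower-right block is untouched. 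For a rotation, the matrix is block-diagonal with an identity $2\times2$ block and an $\SO_3(\bbR)$ block $\bsR$ in the lower right, so $\bsM\bsQSigma\bsM^\trans$ has upper-left block $\bsJ$ and lower-right block $\bsR\bsI_3\bsR^\trans=\bsI_3$; here one uses that these are genuine orthogonal matrices. The translation by $\bsw=(x,y,z)$ is the only case requiring more than a glance: writing the translation matrix in block form and multiplying out $\bsM\bsQSigma\bsM^\trans$ produces a $5\times5$ matrix whose entries one matches against $\bsQSigma$; the key cancellations are that the cross terms involving $|\bsw|^2$ and the linear terms $2x,2y,2z$ conspire — using the $-\tfrac12$ entries of $\bsJ$ — to annihilate everything off the expected pattern. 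I expect this translation computation to be the only nontrivial step, and even it is a routine finite verification.

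Finally, once $\bsM\bsQSigma\bsM^\trans=\bsQSigma$ is established for all $\bsM\in\Mob_3^\pm$, the companion identity follows formally: from $\bsM\bsQSigma\bsM^\trans=\bsQSigma$ one has $\bsQSigma^{-1}=\bsM^\trans\tts{}^{-1}\bsQSigma^{-1}\bsM^{-1}$, hence $\bsM^\trans\bsQSigma^{-1}\bsM=\bsQSigma^{-1}$ (apply the previous relation with $\bsM^{-1}\in\Mob_3^\pm$, or simply transpose and rearrange), and multiplying by $2$ gives $\bsM^\trans\bsQW\bsM=\bsQW$. The last assertion of the lemma is then immediate: for inversive coordinate vectors $\bsv_1,\bsv_2$ (row vectors), $\varSigma(\bsv_1\bsM,\bsv_2\bsM)=(\bsv_1\bsM)\bsQSigma(\bsv_2\bsM)^\trans=\bsv_1(\bsM\bsQSigma\bsM^\trans)\bsv_2^\trans=\bsv_1\bsQSigma\bsv_2^\trans=\varSigma(\bsv_1,\bsv_2)$, which is the claimed invariance of the inversive product under the M\"obius group action.
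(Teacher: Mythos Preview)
Your proof is correct and follows exactly the approach of the paper's own proof: verify the identity $\bsM\bsQSigma\bsM^\trans=\bsQSigma$ directly on the four families of generating matrices (standard inversion, rescalings, translations, rotations) and use that these generate $\Mob_3^\pm$. Your version is considerably more detailed than the paper's one-sentence proof, but the strategy and the logical structure are identical.
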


\begin{proof}
By direct calculation, we can check the invariance under the standard inversions, rescaling, translations and rotations using the matrices above; these transformations generate the M\"obius group $\Mob_3^\pm$.
\end{proof}

\section{Orthoplicial Platonic Configurations and Platonic Group} \label{sec:Platonic}

\subsection{Platonic Configurations} \label{ssec:PlatonicConfig}

We define the \emph{standard orthoplicial Platonic configuration} $\scrV_0$ in the M\"obius 3-space $\hat \bbR^3$ to be an ordred collection of eight spheres in the following table, which lists the inversive coordinates $(a, b, \hat x,\hat y,\hat z)$ of each constituent sphere $S_k$, as well as its oriented radius and its center if $S_k$ is not planar.

\vspace{3mm}
{\small
\begin{center}
\begin{tabular}{|l|rrrrr|rrrr|}
\hline
$k$ & $a$ & $b$ & $\hat x$ & $\hat y$ & $\hat z$ & $r$ & $c_x$ & $c_y$ & $c_z$\\
\hline
$1$&$2$&$0$&$0$&$0$&$1$&-&-&-&-\\
$2$&$2$&$0$&$0$&$0$&$-1$&-&-&-&-\\
$3$&$1$&$1$&$\sqrt{2}$&$0$&$0$&$1$&$\sqrt{2}$&$0$&$0$\\
$4$&$1$&$1$&$0$&$\sqrt{2}$&$0$&$1$&$0$&$\sqrt{2}$&$0$\\
$5$&$0$&$2$&$0$&$0$&$-1$&$1/2$&$0$&$0$&$-1/2$\\
$6$&$0$&$2$&$0$&$0$&$1$&$1/2$&$0$&$0$&$1/2$\\
$7$&$1$&$1$&$-\sqrt{2}$&$0$&$0$&$1$&$0$&$0$&$0$\\
$8$&$1$&$1$&$0$&$-\sqrt{2}$&$0$&$1$&$0$&$0$&$0$\\
\hline
\end{tabular}
\end{center}
}
\begin{figure}[h]
\begin{center}
\includegraphics[height=43mm, width=55mm]{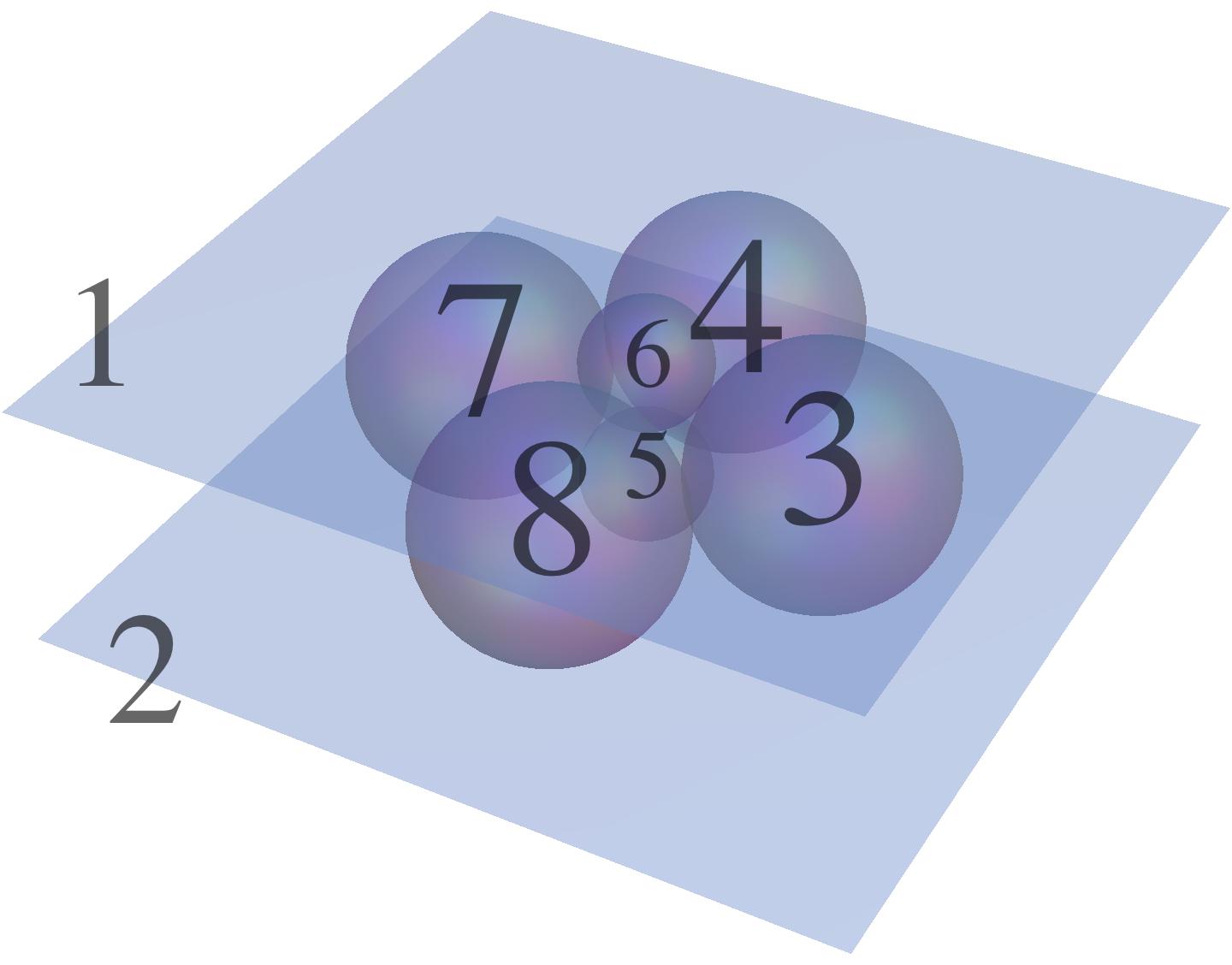}
\hspace{6mm}
\includegraphics[height=43mm, width=55mm]{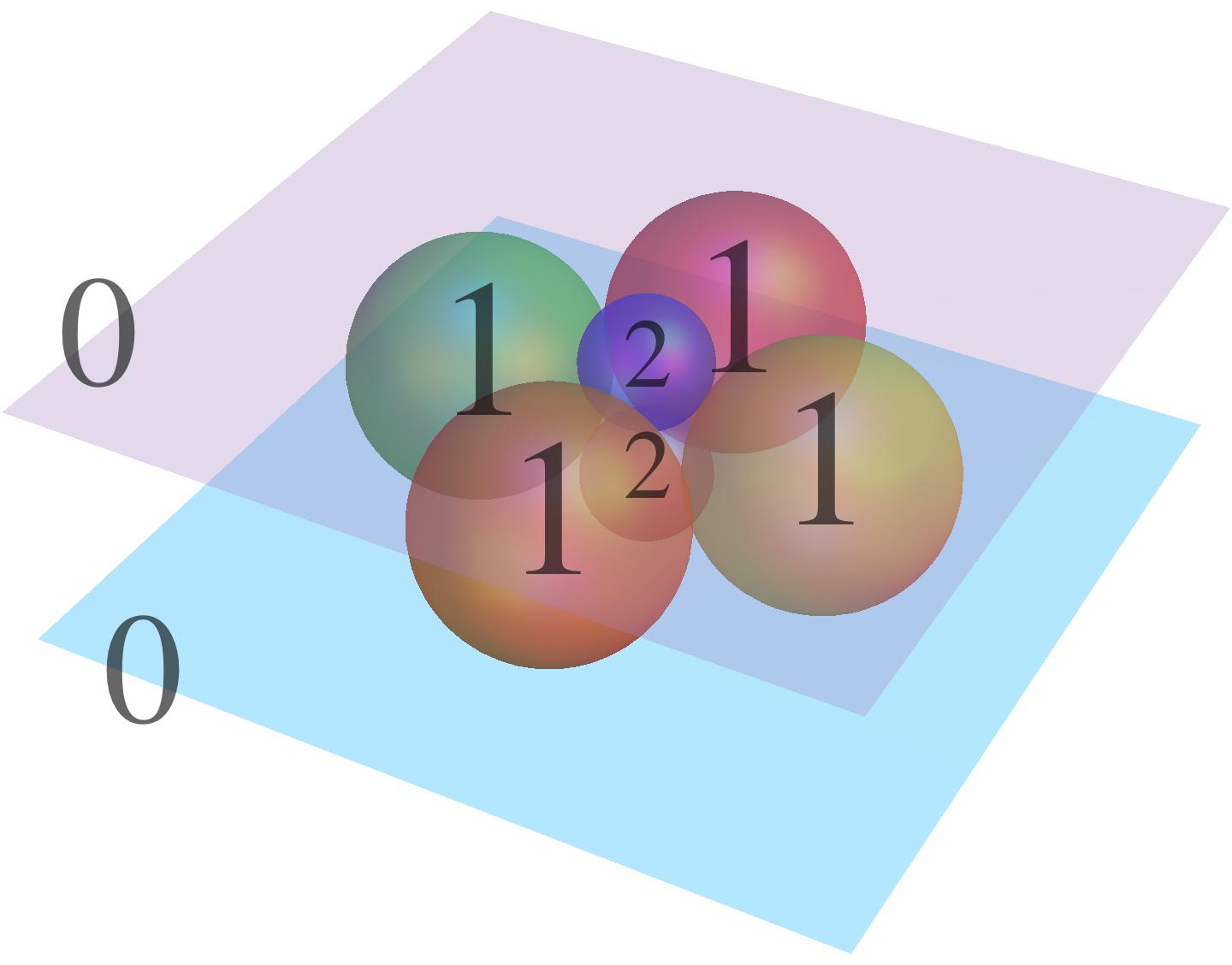} 
\end{center}
\caption{The standard configuration $\scrV_0$}
\label{fig:V0}
\end{figure}

\noindent
The configuration $\scrV_0$ is shown in \hyperref[fig:V0]{Figure~\ref*{fig:V0}}, with each sphere $S_k$ labeled by $k$ (left) and by its bend $b_k=b(S_k)$ (right); we note that $S_1$ and $S_2$ are planes in $\bbR^3$, i.e.\;spheres through $\infty$, while $S_3, \cdots, S_8$ are honest spheres in $\bbR^3$. By inspection, one can verify that distinct spheres $S_i$ and $S_j$ are tangent if $i-j \not\equiv 0 \pmod 4$, and are disjoint if $i-j \equiv 0 \pmod 4$. It follows that the tangency graph for this configuration of spheres is the 4-orthoplicial graph, i.e.\;isomorphic to the 1-skeleton of the 4-orthoplex, also known as the \emph{16-cell} or the \emph{4-dimensional cross-polytope}.

\begin{defn}
An \emph{orthoplicial Platonic configuration} $\scrV$ in the M\"obius 3-space $\hat \bbR^3$ is defined to be a collection of eight spheres, which is conformally or anti-conformally equivalent to the standard configuration $\scrV_0$.
\end{defn}
Any orthoplicial configuration consists of eight spheres, bounding their respective orienting regions with disjoint interiors, such that its tangency graph is the 4-orthoplicial graph. For brevity, we may simply call an orthoplicial Platonic configuration as an orthoplicial configuration or a Platonic configuration, when no confusion should arise in a given context.

We shall label the constituent spheres as $S_1, S_2, \cdots, S_8$, so that distinct spheres $S_i,S_j \in \scrV$ are tangent if $i-j \not\equiv 0 \pmod 4$, and are disjoint if $i-j \equiv 0 \pmod 4$; such an ordering is said to be \emph{admissible}. The standard configuraton $\scrV_0$ is equipped with an admissible ordering. Choosing an admissible ordering is equivalent to choosing an ordered quadruple $\scrF=\{S_1,S_2,S_3,S_4\} \subset \scrV$ of pairwise tangent spheres; the remaining spheres are then unambiguously ordered. There are 16 unordered quadruples corresponding to the 16 facets of the 4-orthoplex, each with 24 ways to order them; hence, every orthoplicial configuration has 384 distinct admissible ordering. For any admissibly ordered configuration $\scrV$ and another configuration $\scrV'$, if we choose a M\"obius transformation that takes $\scrV$ to $\scrV'$, then $\scrV'$ inherits an admissible ordering from $\scrV$; choosing a M\"obius transformation that permutes the constituent spheres of $\scrV$, we obtain a new admissible ordering of $\scrV$.

An admissibly ordered orthoplicial configuration $\scrV$ can be specified directly by an ordered list of the inversive coordinate vectors of the constituent spheres.

\begin{defn}[$V$-matrix]
Given an admissibly ordered orthoplicial configuration $\scrV$, its \emph{$V$-matrix} is an $8 \times 5$ matrix $\bsV=\bsV(\scrV)$ whose $k$-th row is the inversive coordinate vector $\bsv_k=\bsv(S_k)$ of the $k$-th constituent sphere $S_k$ in $\scrV$.
\end{defn}

More efficiently, we can encode such a configuration $\scrV$ by a $5 \times 5$ matrix; the analogous $4 \times 4$ matrix was introduced in \cite{GM} to encode an octahedral configuration of six circles in the M\"obius plane $\hat \bbR^2$.

\begin{defn}[$F$-matrix]
Given an admissibly ordered orthoplicial configuration, its \emph{$F$-matrix} is a $5 \times 5$ matrix $\bsF=\bsF(\scrV)$ whose $k$-th row is $\bsv_k=\bsv(S_k)$ for $k=1, \cdots, 4$, and whose $5$-th row is the \emph{antipodal vector} $\bsv_\mu=\bsv_\mu(\scrV)$, defined by
\[
\bsv_\mu:=\frac{1}{\,2\,}(\bsv_1+\bsv_5).
\]
\end{defn}

Note that, for each unordered orthoplicial configuration $\scrV$, choosing a particular $F$-matrix is equivalent to choosing an ordered quadruple $\scrF \subset \scrV$ of pairwise tangent spheres, and hence equivalent to choosing one of 384 admissible orderings; the spheres in $\scrF$ are precisely the ones whose inversive coordinate vectors $\bsv_k$ appear in the first four rows of the $F$-matrix.

Although the use of the antipodal vector in the definition may seem a bit artificial at first, the $F$-matrices is a natural and useful tool to encode orthoplicial configurations. The following observation is crucial for the utility of $F$-matrices.

\begin{lem} \label{lem:Antipodal}
Let $\scrV$ and $\scrV'$ be admissibly ordered orthoplicial configurations, with the $V$-matrices $\bsV$ and $\bsV'$, the antipodal vectors $\bsv_\mu$ and $\bsv'_\mu$, and the $F$-matrices $\bsF$ and $\bsF'$, respectively. If $\bsM$ represents the M\"obius transformation taking $\scrV$ to $\scrV'$, i.e.\;$\bsV'=\bsV \bsM$, then we have
\[
\bsv'_\mu=\bsv_\mu \bsM
\]
and hence
\[
\bsF'=\bsF \bsM.
\]
\end{lem}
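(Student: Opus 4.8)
The plan is short, because the statement is a formal consequence of the linearity of the M\"obius action on inversive coordinate vectors. First I would unwind the hypothesis $\bsV'=\bsV\bsM$: since the $k$-th row of $\bsV$ is $\bsv_k$, the $k$-th row of $\bsV'$ is $\bsv'_k$, and right multiplication by the $5\times5$ matrix $\bsM$ acts row-by-row, the matrix identity $\bsV'=\bsV\bsM$ is equivalent to the eight vector identities $\bsv'_k=\bsv_k\bsM$ for $k=1,\dots,8$; in particular $\bsv'_1=\bsv_1\bsM$ and $\bsv'_5=\bsv_5\bsM$.

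Next I would feed this into the definition of the antipodal vector. Although $\bsv_\mu=\frac{1}{2}(\bsv_1+\bsv_5)$ is in general not itself the inversive coordinate vector of a sphere, recall that $\Mob_3^\pm$ was identified with the $5\times5$ matrix group generated by the standard inversion, rescalings, translations and rotations, acting linearly on all of $\bbR^5$; hence applying $\bsM$ to $\bsv_\mu$ is legitimate, and linearity of this action gives
\[
\bsv'_\mu=\frac{1}{2}(\bsv'_1+\bsv'_5)=\frac{1}{2}(\bsv_1\bsM+\bsv_5\bsM)=\Big(\frac{1}{2}(\bsv_1+\bsv_5)\Big)\bsM=\bsv_\mu\bsM,
\]
which is the first asserted identity. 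For the second, note that $\bsF$ (resp.\ $\bsF'$) is the $5\times5$ matrix with rows $\bsv_1,\dots,\bsv_4,\bsv_\mu$ (resp.\ $\bsv'_1,\dots,\bsv'_4,\bsv'_\mu$); by the two displays above, the $k$-th row of $\bsF'$ equals the $k$-th row of $\bsF$ times $\bsM$ for each $k=1,\dots,5$, which is exactly $\bsF'=\bsF\bsM$.

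I do not expect a genuine obstacle here. The one point worth a sentence of care is the legitimacy of applying $\bsM$ to the non-geometric vector $\bsv_\mu$, which I would dispatch by the remark above: any linear combination of inversive coordinate vectors is transformed correctly by the (linear) M\"obius matrix action, and $\bsv_\mu$ is such a combination. Note that this argument does not require knowing that $\frac{1}{2}(\bsv_1+\bsv_5)$ is independent of the chosen antipodal pair (a fact that will follow from \hyperref[coro:Antipodal]{Corollary~\ref*{coro:Antipodal}}); it only uses the defining formula for $\bsv_\mu$ attached to the given admissible orderings of $\scrV$ and $\scrV'$.
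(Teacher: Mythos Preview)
Your proof is correct and follows exactly the same approach as the paper: unwind $\bsV'=\bsV\bsM$ row-by-row to get $\bsv'_k=\bsv_k\bsM$, then use linearity to conclude $\bsv'_\mu=\frac{1}{2}(\bsv'_1+\bsv'_5)=\frac{1}{2}(\bsv_1+\bsv_5)\bsM=\bsv_\mu\bsM$. Your added remark about the legitimacy of applying $\bsM$ to the non-geometric vector $\bsv_\mu$ is a helpful clarification the paper leaves implicit.
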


\begin{proof}
Writing $\bsv_k$ and $\bsv'_k=\bsv_k \bsM$ for the $k$-th row vector of $\bsV$ and $\bsV'$ respectively, we have $2\bsv'_\mu=\bsv'_1+\bsv'_5=\bsv_1\bsM+\bsv_5\bsM=(\bsv_1+\bsv_5)\bsM=2\bsv_\mu \bsM$
by linearity, and hence $\bsv'_\mu=\bsv_\mu \bsM$.
\end{proof}

Unlike the first four rows of $F$-matrices, the antipodal vector in the last row of $F$-matrices is independent of the choice of admissible orderings.

\begin{coro} \label{coro:Antipodal}
For any admissibly ordered orthoplicial configuration $\scrV$,
\begin{align} \label{eqn:Antipodal}
\bsv_\mu=\frac{1}{\,2\,}(\bsv_1+\bsv_5)=\frac{1}{\,2\,}(\bsv_2+\bsv_6)=\frac{1}{\,2\,}(\bsv_3+\bsv_7)=\frac{1}{\,2\,}(\bsv_4+\bsv_8).
\end{align}
Hence, the $V$-matrix $\bsV=\bsV(\scrV)$ and the $F$-matrix $\bsF=\bsF(\scrV)$ satisfy $\bsV=\bsD \bsF
$, where the \emph{decompression matrix} $\bsD$ is given by
\[
\bsD:=\medpmatrix{
1&0&0&0&0\\
0&1&0&0&0\\
0&0&1&0&0\\
0&0&0&1&0\\
-1&0&0&0&2\\
0&-1&0&0&2\\
0&0&-1&0&2\\
0&0&0&-1&2
}.
\]
\end{coro}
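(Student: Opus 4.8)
The plan is to deduce the statement from the single standard configuration $\scrV_0$ by M\"obius equivariance, and to note at the outset that the matrix identity $\bsV = \bsD\bsF$ is a purely formal consequence of~\eqref{eqn:Antipodal}. Indeed, given~\eqref{eqn:Antipodal}, the first four rows of $\bsD\bsF$ are $\bsv_1,\dots,\bsv_4$ by the shape of $\bsD$, while its $(4+k)$-th row is $-\bsv_k + 2\bsv_\mu = -\bsv_k + (\bsv_k + \bsv_{k+4}) = \bsv_{k+4}$ for $k=1,2,3,4$, so $\bsD\bsF = \bsV$. Hence only the chain of equalities~\eqref{eqn:Antipodal} needs proof.

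First I would verify~\eqref{eqn:Antipodal} for $\scrV_0$ with its standard admissible ordering by reading the inversive coordinate vectors off the table defining $\scrV_0$:
\[
\tfrac12(\bsv_1+\bsv_5)=\tfrac12(\bsv_2+\bsv_6)=\tfrac12(\bsv_3+\bsv_7)=\tfrac12(\bsv_4+\bsv_8)=(1,1,0,0,0),
\]
so the identity holds for $\scrV_0$ and $\bsv_\mu(\scrV_0)=(1,1,0,0,0)$. Then, for an arbitrary admissibly ordered orthoplicial configuration $\scrV$, I would exhibit a M\"obius transformation $\bsM$ carrying the standard-ordered $\scrV_0$ onto $\scrV$, i.e.\ with $\bsV(\scrV)=\bsV(\scrV_0)\,\bsM$. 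Granting such an $\bsM$, Lemma~\ref{lem:Antipodal} gives $\bsv_\mu(\scrV)=\bsv_\mu(\scrV_0)\,\bsM$, and for each $k$, using $\bsv_k(\scrV)=\bsv_k(\scrV_0)\bsM$ and linearity,
\[
\tfrac12\bigl(\bsv_k(\scrV)+\bsv_{k+4}(\scrV)\bigr)=\tfrac12\bigl(\bsv_k(\scrV_0)+\bsv_{k+4}(\scrV_0)\bigr)\bsM=(1,1,0,0,0)\,\bsM,
\]
which is independent of $k$; this is exactly~\eqref{eqn:Antipodal} for $\scrV$, and the identity $\bsV=\bsD\bsF$ then follows as above.

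The one substantive point is producing such an $\bsM$ for \emph{every} admissible ordering. By definition $\scrV$ is conformally or anti-conformally equivalent to $\scrV_0$ as an unordered configuration, so what remains is to match up the orderings; concretely, one needs the stabilizer of $\scrV_0$ in $\Mob_3^\pm$ to realize every relabelling of the eight spheres that respects the $4$-orthoplicial tangency graph, i.e.\ to surject onto the automorphism group of that graph, the hyperoctahedral group of order $384=2^4\cdot 4!$. This stabilizer injects into the automorphism group (a M\"obius self-map fixing the ordering fixes all eight inversive coordinate vectors, which span $\bbR^5$, hence is trivial), so it suffices to write down enough M\"obius symmetries of $\scrV_0$: the coordinate reflections $x\mapsto-x$, $y\mapsto-y$, $z\mapsto-z$ induce the permutations $(S_3\,S_7)$, $(S_4\,S_8)$ and $(S_1\,S_2)(S_5\,S_6)$ while fixing the remaining spheres; the quarter-turn about the $z$-axis induces the $4$-cycle $(S_3\,S_4\,S_7\,S_8)$ while fixing $S_1,S_2,S_5,S_6$; and a suitable inversion interchanges the pair of planar spheres with a pair of honest spheres, moving the antipodal pair $\{S_1,S_5\}$ into the orbit of $\{S_3,S_7\}$ — an order count then shows these generate the full group. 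I expect this identification of the ``Platonic group'' of $\scrV_0$ with the full hyperoctahedral group (equivalently, transitivity of $\Mob_3^\pm$ on admissibly ordered configurations) to be the main obstacle; once it is in hand — or if it is established independently in \S\ref{sec:Platonic} and can simply be cited — the corollary reduces to the single table computation and the formal matrix manipulation above.
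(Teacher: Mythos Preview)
Your argument is correct, and close in spirit to the paper's, but organized differently. The paper does not reduce to $\scrV_0$: it works directly with the given $\scrV$, asserting that for each $j=1,2,3$ there is a M\"obius transformation fixing $\scrV$ setwise and sending the disjoint pair $\{S_1,S_5\}$ to $\{S_{1+j},S_{5+j}\}$, and then invokes Lemma~\ref{lem:Antipodal}. Your route---verify~\eqref{eqn:Antipodal} for $\scrV_0$ by table lookup, then push forward by a M\"obius map $\bsM$ with $\bsV(\scrV)=\bsV(\scrV_0)\bsM$---is cleaner in one respect: it makes the linearity step completely transparent and avoids having to know that the self-symmetry of $\scrV$ also fixes the midpoint $\bsv_\mu$ (a point the paper's one-line proof leaves implicit).

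Where you work harder than necessary is in demanding \emph{order-preserving} M\"obius equivalence, i.e.\ full transitivity on admissibly ordered configurations. You do not need the whole hyperoctahedral stabilizer for this corollary. By definition there is \emph{some} $\bsM\in\Mob_3^\pm$ with $\scrV=\scrV_0\bsM$ as unordered collections; this $\bsM$ endows $\scrV$ with \emph{some} admissible ordering, for which your computation already gives all four midpoints equal to $(1,1,0,0,0)\bsM$. But the four unordered antipodal pairs $\{S_k,S_{k+4}\}$ are intrinsic to the tangency graph (they are exactly the non-tangent pairs), hence independent of which admissible ordering one chooses. So the set $\{\tfrac12(\bsv_k+\bsv_{k+4}):k=1,\dots,4\}$ is the same for every admissible ordering, and since it is a singleton for one ordering it is a singleton for all. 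This sidesteps the ``main obstacle'' you flagged and makes the proof a two-line affair on top of the $\scrV_0$ table check; the full identification of the Platonic group with $BC_4$, while true and used later in \S\ref{sec:Platonic}, is not needed here.
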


\begin{proof}
For each of $j=1,2,3$, there is a M\"obius transformation that takes the spheres in $\scrV$ to themselves, taking the disjoint pair $S_1$, $S_5$ to another disjoint pair $S_{1+j}$, $S_{5+j}$; so, the equalities \hyperref[eqn:Antipodal]{(\ref*{eqn:Antipodal})} follow from \hyperref[lem:Antipodal]{Lemma~\ref*{lem:Antipodal}}. The equality $\bsV=\bsD \bsF$ then follows immediately.
\end{proof}

\begin{exa} \label{exa:V0}
The $V$-matrix and the $F$-matrix of the standard configuration $\scrV_0$ are
\[
\bsV_0:=\medpmatrix{
2&0&0&0&1\\
2&0&0&0&-1\\
1&1&\sqrt{2}&0&0\\
1&1&0&\sqrt{2}&0\\
0&2&0&0&-1\\
0&2&0&0&1\\
1&1&-\sqrt{2}&0&0\\
1&1&0&-\sqrt{2}&0
},
\qquad
\bsF_0:=\medpmatrix{
2&0&0&0&1\\
2&0&0&0&-1\\
1&1&\sqrt{2}&0&0\\
1&1&0&\sqrt{2}&0\\
1&1&0&0&0
}.
\]
\end{exa}

The standard configuration $\scrV_0$ is quite special with two of its constituent spheres being planes with bend zero, i.e.\;spheres through $\infty$; indeed, it can be shown that such a configuration is unique up to Euclidean similarity. In this article, we will work mostly with configurations in which one constituent sphere has a negative bend and bounds a ball, as the complement of its orienting region, that contains the remaining seven constituent spheres.

\begin{exa} \label{exa:V1}
Inverting the standard configuration $\scrV_0$ along the 4th sphere $S_4$, we obtain another orthoplicial Platonoic configuration which we denote by $\scrV_1$; inverting along $S_3, S_7, S_8$ yield configurations that are equivalent to $\scrV_1$ up to Euclidean isometry. The configuration $\scrV_1$ is depicted in \hyperref[fig:V1+V7d]{Figure~\ref*{fig:V1+V7d}} (left), with each constituent sphere $S_k$ labeled by its bend $b_k=b(S_k)$. The $V$-matrix and the $F$-matrix of the configuration $\scrV_1$ are
\[
\bsV_1:=\medpmatrix{
4&2&0&2\sqrt{2}&1\\
4&2&0&2\sqrt{2}&-1\\
3&3&\sqrt{2}&2\sqrt{2}&0\\
-1&-1&0&-\sqrt{2}&0\\
2&4&0&2\sqrt{2}&-1\\
2&4&0&2\sqrt{2}&1\\
3&3&-\sqrt{2}&2\sqrt{2}&0\\
7&7&0&5\sqrt{2}&0
},
\qquad
\bsF_1:=\medpmatrix{
4&2&0&2\sqrt{2}&1\\
4&2&0&2\sqrt{2}&-1\\
3&3&\sqrt{2}&2\sqrt{2}&0\\
-1&-1&0&-\sqrt{2}&0\\
3&3&0&2\sqrt{2}&0
}.
\]
\end{exa}

\vspace{0mm}
\begin{exa} \label{exa:V7d}
The orthoplicial Platonic configuration in \hyperref[fig:OASP]{Figure~\ref*{fig:OASP}} is another configuration in which one constituent sphere has a negative bend; let us denote this configuration by $\scrV_{7\rmd}$. The configuration $\scrV_{7\rmd}$ is depicted again in \hyperref[fig:V1+V7d]{Figure~\ref*{fig:V1+V7d}} (right), with each constituent sphere $S_k$ labeled by its bend $b_k=b(S_k)$. The $V$-matrix and the $F$-matrix of the configuration $\scrV_{7\rmd}$ are
\[
\bsV_{7\rmd}:=\medpmatrix{
34&20&18\sqrt{2}&2\sqrt{2}&-5\\
18&12&10\sqrt{2}&2\sqrt{2}&-3\\
29&17&15\sqrt{2}&2\sqrt{2}&-6\\
-11&-7&-6\sqrt{2}&-\sqrt{2}&2\\
32&22&18\sqrt{2}&2\sqrt{2}&-7\\
48&30&26\sqrt{2}&2\sqrt{2}&-9\\
37&25&21\sqrt{2}&2\sqrt{2}&-6\\
77&49&42\sqrt{2}&5\sqrt{2}&-14
},
\qquad
\bsF_{7\rmd}:=\medpmatrix{
34&20&18\sqrt{2}&2\sqrt{2}&-5\\
18&12&10\sqrt{2}&2\sqrt{2}&-3\\
29&17&15\sqrt{2}&2\sqrt{2}&-6\\
-11&-7&-6\sqrt{2}&-\sqrt{2}&2\\
33&21&18\sqrt{2}&2\sqrt{2}&-6
}.
\]
\end{exa}

\begin{figure}[h]
\begin{center}
\includegraphics[trim = 70px 91px 51px 62px, clip, width=55mm]{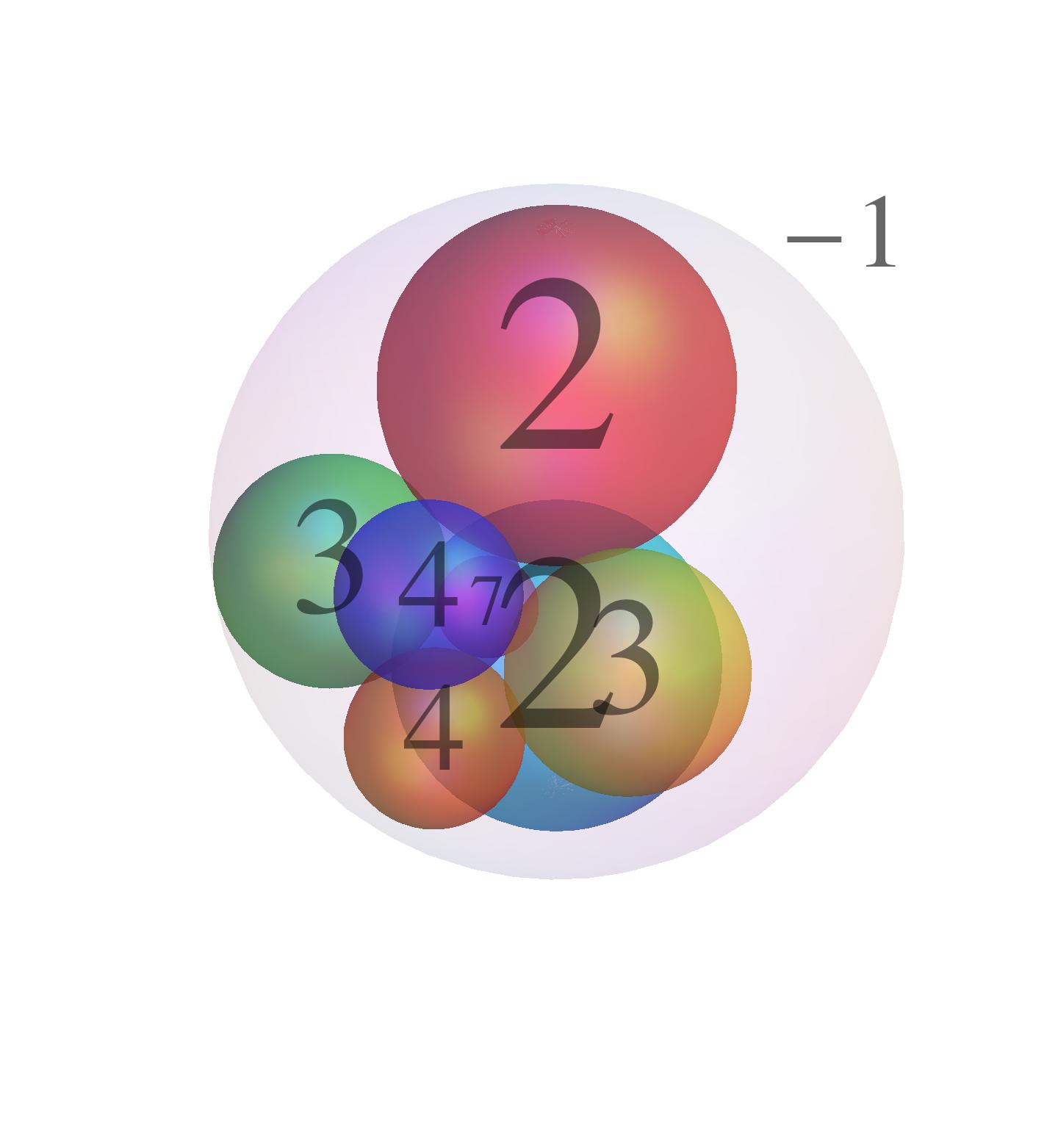}
\hspace{6mm}
\includegraphics[trim = 70px 91px 51px 62px, clip, width=55mm]{B4figF7dAg0ob.jpg}
\end{center}
\caption{The orthoplicial configurations $\scrV_1$ (left) and $\scrV_{7\rmd}$ (right)}
\label{fig:V1+V7d}
\end{figure}

\subsection{Descartes-Guettler-Mallows Theorem}

Guettler and Mallows obtained a certain analogue \cite[Thm.\,1]{GM} of Descartes' Theorem in the context of octahedral configurations of six circles. We shall now discuss an analogous theorem for orthoplicial configurations of eight spheres. Let us first define two matrices:
\[
\bsGSigmaF:=\medpmatrix{
1&-1&-1&-1&-1\\
-1&1&-1&-1&-1\\
-1&-1&1&-1&-1\\
-1&-1&-1&1&-1\\
-1&-1&-1&-1&-1
},
\qquad
\bsQF:=2 \bsGSigmaF^{-1}=\medpmatrix{
1&0&0&0&-1\\
0&1&0&0&-1\\
0&0&1&0&-1\\
0&0&0&1&-1\\
-1&-1&-1&-1&2
}.
\]

\begin{defn}
The \emph{orthoplicial Descartes form} is defined to be the quinternary quadratic form $F$ with indefinite signature $(4,1)$, associated to the symmetric matrix $\bsQF$; namely, the form $F$ on a quintuple $\bszeta=(\zeta_1,\zeta_2,\zeta_3,\zeta_4,\zeta_\mu)^\trans$ is defined by
\begin{align} \label{eqn:Fzeta}
F(\bszeta):=\bszeta^\trans \bsQF \bszeta = 2\zeta_\mu^2-2\zeta_\mu(\zeta_1+\zeta_2+\zeta_3+\zeta_4)+(\zeta_1^2+\zeta_2^2+\zeta_3^2+\zeta_4^2).
\end{align}
We denote the orthogonal and special orthogonal group of $F$ by $\OF$ and $\SOF$.
\end{defn}

The matrix $\bsGSigmaF$ can be regarded as the \emph{Gramian} of $F$-matrices with respect to the inversive product $\varSigma$, which is reflected in the choice of our notation; more precisely, its significance can be stated as follows.

\begin{lem} \label{lem:Gramian}
For any admissibly ordered orthoplicial configuration $\scrV$, its $F$-matrix $\bsF=\bsF(\scrV)$ is non-singular and satisfies
\begin{align} \label{eqn:Gramian}
\bsF \bsQSigma \bsF^\trans=\bsGSigmaF.
\end{align}
\end{lem}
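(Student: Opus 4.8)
The plan is to read off $\bsF\bsQSigma\bsF^\trans$ entry by entry, using that its $(i,j)$-entry is the inversive product $\varSigma(\bsv_i,\bsv_j)$ of the corresponding rows of $\bsF$. For $1\le i,j\le 4$ the relevant rows are the inversive coordinate vectors of the pairwise tangent spheres $S_1,\dots,S_4$; since the orienting regions of an orthoplicial configuration have disjoint interiors, these spheres are externally tangent, i.e.\ not nested, so $\varSigma(\bsv_i,\bsv_j)=-1$ for $i\ne j$, while $\varSigma(\bsv_i,\bsv_i)=1$ holds for every inversive coordinate vector by a direct computation from the definition of inversive coordinates. Thus the upper-left $4\times4$ block of $\bsF\bsQSigma\bsF^\trans$ already agrees with that of $\bsGSigmaF$.

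For the last row and column I would invoke Corollary~\ref{coro:Antipodal}, which gives $\bsv_\mu=\tfrac12(\bsv_i+\bsv_{i+4})$ for each $i\in\{1,2,3,4\}$. Setting $t_i:=\varSigma(\bsv_i,\bsv_{i+4})$, bilinearity yields $\varSigma(\bsv_i,\bsv_\mu)=\tfrac12(1+t_i)$ and $\varSigma(\bsv_\mu,\bsv_\mu)=\tfrac14(2+2t_i)=\tfrac12(1+t_i)$; comparing, the $t_i$ all equal a common value $t$, and every entry in the last row and column of $\bsF\bsQSigma\bsF^\trans$ equals $\tfrac12(1+t)$. Hence the identity $\bsF\bsQSigma\bsF^\trans=\bsGSigmaF$ reduces to the single assertion $t=-3$, equivalently $\varSigma(\bsv_\mu,\bsv_\mu)=-1$.

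To prove the latter I would pass to the standard configuration. By definition some $\bsM\in\Mob_3^\pm$ carries $\scrV$ to $\scrV_0$; because Möbius transformations preserve tangency and disjointness, the ordering induced on $\scrV_0$ is admissible, so Corollary~\ref{coro:Antipodal} applies to it, and combined with Lemma~\ref{lem:Antipodal} (equivariance of the antipodal vector) gives $\bsv_\mu(\scrV)\bsM=\bsv_\mu(\scrV_0)=(1,1,0,0,0)$, the last equality being read off $\bsF_0$ in Example~\ref{exa:V0}. Lemma~\ref{lem:MobiusInvariance} then gives $\varSigma(\bsv_\mu,\bsv_\mu)=\varSigma\bigl((1,1,0,0,0),(1,1,0,0,0)\bigr)=-1$, so $t=-3$ and $\bsF\bsQSigma\bsF^\trans=\bsGSigmaF$. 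Non-singularity of $\bsF$ then follows by taking determinants: $(\det\bsF)^2\det\bsQSigma=\det\bsGSigmaF$, and $\bsQSigma$, $\bsGSigmaF$ are both invertible (indeed $\bsQW=2\bsQSigma^{-1}$ and $\bsQF=2\bsGSigmaF^{-1}$), forcing $\det\bsF\ne 0$.

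The step that needs real care, as opposed to routine bookkeeping, is the identity $\varSigma(\bsv_\mu,\bsv_\mu)=-1$. The inversive products coming from external tangency are forced by the combinatorial type of the configuration, but the inversive product of a disjoint antipodal pair is not, so one genuinely has to transport its value from the explicit model $\scrV_0$; this transport is legitimate precisely because $\varSigma$ is Möbius invariant (Lemma~\ref{lem:MobiusInvariance}) and $\bsv_\mu$ transforms equivariantly (Lemma~\ref{lem:Antipodal}). The only subtlety is ensuring the ordering induced on $\scrV_0$ is admissible so that Corollary~\ref{coro:Antipodal} is available for it, which is immediate since the defining tangency/disjointness pattern is preserved by $\bsM$.
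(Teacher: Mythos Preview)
Your proof is correct. The paper takes a more compressed route: it simply verifies $\bsF_0\bsQSigma\bsF_0^\trans=\bsGSigmaF$ by direct matrix calculation for the standard configuration, then writes $\bsF=\bsF_0\bsM$ for the M\"obius transformation $\bsM$ taking $\scrV_0$ to $\scrV$ (via Lemma~\ref{lem:Antipodal}) and invokes the invariance $\bsM\bsQSigma\bsM^\trans=\bsQSigma$ to transport the entire identity at once.

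Your argument differs in that you compute the upper $4\times4$ block of $\bsF\bsQSigma\bsF^\trans$ directly from the external-tangency pattern (valid for any $\scrV$, no model needed), and then reduce the last row and column to the single scalar $\varSigma(\bsv_\mu,\bsv_\mu)$, transporting only that number from $\scrV_0$. Both proofs ultimately rest on the same M\"obius invariance, but yours separates cleanly what is forced by the orthoplicial combinatorics alone from what genuinely depends on the explicit model---namely, the hyperbolic distance between an antipodal pair. The paper's version is shorter; yours explains more of the structure of $\bsGSigmaF$.
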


\begin{proof}
The non-singularity of $\bsF$ is implicit in the equation \hyperref[eqn:Gramian]{(\ref*{eqn:Gramian})}, since $\bsG_F$ is invertible. For the $F$-matrix $\bsF_0$ of the standard configuration, the direct calculation yields $\bsF_0 \bsQSigma \bsF_0^\trans=\bsGSigmaF$ as desired. For the general case, let $\bsM$ be a matrix representing a M\"obius transformation that takes the standard configuration $\scrV_0$ to the configuration $\scrV$ so that $\bsF = \bsF_0 \bsM$ by \hyperref[lem:Antipodal]{Lemma~\ref*{lem:Antipodal}}. Then, we have
\[
\bsF \bsQSigma \bsF^\trans=(\bsF_0 \bsM) \bsQSigma (\bsF_0 \bsM)^\trans=\bsF_0 (\bsM \bsQSigma \bsM^\trans) \bsF_0^\trans=\bsF_0 \bsQSigma \bsF_0^\trans
\]
by \hyperref[lem:MobiusInvariance]{Lemma~\ref*{lem:MobiusInvariance}}, i.e.\;the $\Mob_3^\pm$-invariance of the inversive product $\varSigma$.
\end{proof}

The orthoplicial Descartes form $F$ can be regarded as the analogue of the so-called Descartes quadratic form for tetrahedral configuration of four pairwise tangent circles in the M\"obius plane $\hat \bbR^2$; we now establish the analogue of Descartes' Theorem, stated in the matrix form as follows.

\begin{thm}[Orthoplicial Descartes-Guettler-Mallows Theorem] \label{thm:DGM}
For any admissibly ordered orthoplicial configuration $\scrV$, its $F$-matrix $\bsF=\bsF(\scrV)$ satisfies
\begin{align} \label{eqn:DGM}
\bsF^\trans \bsQF \bsF=\bsQW.
\end{align}
In particular, writing $\bsa$, $\bsb$, $\hat\bsx$, $\hat\bsy$, $\hat\bsz$ for the 1st, 2nd, 3rd, 4th, 5th column vectors of the $F$-matrix, we have quadratic equations
\begin{align} \label{eqn:DGM-diag}
F(\bsa)=0, \quad
F(\bsb)=0, \quad
F(\hat\bsx)=2, \quad
F(\hat\bsy)=2, \quad
F(\hat\bsz)=2.
\end{align}
\end{thm}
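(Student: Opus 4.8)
The plan is to reduce the identity \eqref{eqn:DGM} to the single case of the standard configuration $\scrV_0$ by exploiting M\"obius invariance, exactly as in the proof of Lemma~\ref{lem:Gramian}. The key algebraic facts are already available: by Lemma~\ref{lem:MobiusInvariance} every matrix $\bsM$ representing a M\"obius transformation satisfies $\bsM^\trans \bsQW \bsM = \bsQW$, and by Lemma~\ref{lem:Antipodal} passing from $\scrV_0$ to an arbitrary configuration $\scrV$ multiplies the $F$-matrix on the right by such an $\bsM$, i.e.\;$\bsF = \bsF_0 \bsM$. Granting \eqref{eqn:DGM} for $\bsF_0$, we get
\[
\bsF^\trans \bsQF \bsF = \bsM^\trans \bsF_0^\trans \bsQF \bsF_0 \bsM = \bsM^\trans \bsQW \bsM = \bsQW,
\]
which is the desired identity in general. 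So the whole theorem comes down to verifying $\bsF_0^\trans \bsQF \bsF_0 = \bsQW$ for the explicit matrices $\bsF_0$ (from Example~\ref{exa:V0}) and $\bsQF$, $\bsQW$ given above. This is a finite $5\times 5$ matrix computation, which I would simply carry out.

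The cleanest way to organize that verification, and the one I expect to present, is to note the relation $\bsQF = 2\,\bsGSigmaF^{-1}$ together with the Gramian identity $\bsF_0 \bsQSigma \bsF_0^\trans = \bsGSigmaF$ from Lemma~\ref{lem:Gramian}, and the relation $\bsQW = 2\,\bsQSigma^{-1}$ from its definition. Since $\bsF_0$ is non-singular (Lemma~\ref{lem:Gramian}), the Gramian identity can be inverted: $(\bsF_0^\trans)^{-1} \bsQSigma^{-1} \bsF_0^{-1} = \bsGSigmaF^{-1}$, i.e.\;$(\bsF_0^\trans)^{-1} \bsQW \bsF_0^{-1} = \bsQF$ after multiplying by $2$. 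Rearranging gives $\bsF_0^\trans \bsQF \bsF_0 = \bsQW$ immediately. In fact this argument shows that \eqref{eqn:DGM} is \emph{equivalent} to \eqref{eqn:Gramian} for any non-singular $\bsF$, so Theorem~\ref{thm:DGM} follows from Lemma~\ref{lem:Gramian} with no new computation at all; I would present it this way rather than recomputing with $\bsF_0$.

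For the ``in particular'' clause, I would read off the diagonal entries of the matrix identity $\bsF^\trans \bsQF \bsF = \bsQW$. The $(j,j)$ entry of the left side is $\bsc_j^\trans \bsQF \bsc_j = F(\bsc_j)$, where $\bsc_j$ is the $j$-th column of $\bsF$; the $(j,j)$ entries of $\bsQW$ are $0,0,2,2,2$ for $j=1,\dots,5$. Matching the first five columns $\bsa, \bsb, \hat\bsx, \hat\bsy, \hat\bsz$ gives \eqref{eqn:DGM-diag} directly. (One could also extract the off-diagonal relations, e.g.\;$\varSigma$-type orthogonality between $\bsa$ and $\hat\bsx$, but these are not needed for the stated conclusion.) The only mild subtlety — hardly an obstacle — is being careful about which matrix acts on which side: $\bsM$ acts on the right of the $V$- and $F$-matrices, so it appears as $\bsM^\trans \cdots \bsM$ when sandwiching $\bsQW$ and as $\bsM \cdots \bsM^\trans$ when sandwiching $\bsQSigma$; keeping these straight is what makes the two invariance identities in Lemma~\ref{lem:MobiusInvariance} dovetail correctly. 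Everything else is bookkeeping.
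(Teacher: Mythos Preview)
Your proposal is correct and, in the form you ultimately settle on (inverting the Gramian identity $\bsF\bsQSigma\bsF^\trans=\bsGSigmaF$ and scaling by $2$ to get $\bsQF=(\bsF^\trans)^{-1}\bsQW\bsF^{-1}$, then rearranging), it is exactly the paper's proof. The paper likewise reads off \eqref{eqn:DGM-diag} as the diagonal entries of \eqref{eqn:DGM}, so there is nothing to add.
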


\begin{proof}
Inverting both sides of the equation $\bsF \bsQSigma \bsF^\trans=\bsGSigmaF$ from \hyperref[lem:Gramian]{Lemma~\ref*{lem:Gramian}} and scaling by the factor 2, we have
\[
\bsQF=2\bsGSigmaF^{-1}=2(\bsF \bsQSigma \bsF^\trans)^{-1}=(\bsF^\trans)^{-1} (2\bsQSigma^{-1}) \bsF^{-1}
=(\bsF^\trans)^{-1} \bsQW \bsF^{-1}.
\]
Multiplying both sides of the equality $\bsQF=(\bsF^\trans)^{-1} \bsQW \bsF^{-1}$ on the left by $\bsF^\trans$ and on the right by $\bsF$, we obtain the matrix equation \hyperref[eqn:DGM]{(\ref*{eqn:DGM})}, whose diagonal entries are precisely the quadratic equations \hyperref[eqn:DGM-diag]{(\ref*{eqn:DGM-diag})}.
\end{proof}

\subsection{Platonic Group}

As we have seen, an orthoplicial configuration $\scrV$ admits 384 admissible ordering; it is easy to see that they correspond bijectively to 384 elements of the full symmetry group of the 4-orthoplex. The action of the orthoplicial symmetries on $V$-matrices is given simply by the permutation representation, i.e.\;via an $8 \times 8$ matrix group acting on the left of $V$-matrices and permuting their row vectors. We shall work with the $F$-matrices instead; the corresponding action of the orthoplicial symmetries on $F$-matrices is given via a $5 \times 5$ matrix group acting on the left of $F$-matrices.

\begin{defn}
The \emph{orthoplicial Platonic group} $\Platonic$ is defined to be the $5 \times 5$ matrix group generated by $\calR:=\{\bsR_1, \bsR_2, \bsR_3, \bsR_4\}$, consisting of the following 4 matrices:
\begin{align*}
\bsR_1:=\medpmatrix{
0&1&0&0&0\\
1&0&0&0&0\\
0&0&1&0&0\\
0&0&0&1&0\\
0&0&0&0&1
},
&\quad
\bsR_2:=\medpmatrix{
1&0&0&0&0\\
0&0&1&0&0\\
0&1&0&0&0\\
0&0&0&1&0\\
0&0&0&0&1
},\\
\bsR_3:=\medpmatrix{
1&0&0&0&0\\
0&1&0&0&0\\
0&0&0&1&0\\
0&0&1&0&0\\
0&0&0&0&1
},
&\quad
\bsR_4:=\medpmatrix{
1&0&0&0&0\\
0&1&0&0&0\\
0&0&1&0&0\\
0&0&0&-1&2\\
0&0&0&0&1
}.
\end{align*}
\end{defn}

Although the $F$-matrices only contains four inversive coordinate vectors explicitly, it is easy to read off the effect of $\bsR_1,\bsR_2,\bsR_3,\bsR_4$ on all eight coordinate vectors. $\bsR_1$ interchanges $\bsv_1$ and $\bsv_2$, and hence $\bsv_5$ and $\bsv_6$, while fixing $\bsv_3,\bsv_4$ and $\bsv_7,\bsv_8$. $\bsR_2$ interchanges $\bsv_2$ and $\bsv_3$, and hence $\bsv_6$ and $\bsv_7$, while fixing $\bsv_1,\bsv_4$ and $\bsv_5,\bsv_8$. $\bsR_3$ interchanges $\bsv_3$ and $\bsv_4$, and hence $\bsv_7$ and $\bsv_8$, while fixing $\bsv_1,\bsv_2$ and $\bsv_5,\bsv_6$. $\bsR_4$ interchanges $\bsv_4$ and $\bsv_8$, while fixing $\bsv_1,\bsv_2,\bsv_3$ and hence $\bsv_5,\bsv_6,\bsv_7$; this follows from $2\bsv_\mu=\bsv_4+\bsv_8$, appeared as \hyperref[eqn:Antipodal]{(\ref*{eqn:Antipodal})} in \hyperref[coro:Antipodal]{Corollary~\ref*{coro:Antipodal}}. 

Since the Platonic group $\Platonic$ is just a faithful representation of the full orthopliclal symmetry group, it admits a presentation as the Coxeter-Weyl group $BC_4$. Our choice of generators are indeed aligned to this presentation: a complete set of relations for the group $\Platonic$ with respect to $\calR$ is given by
\begin{align*}
\begin{matrix}
\bsR_1^2=\bsR_2^2=\bsR_3^2=\bsR_4^2=\bsI,\\
(\bsR_1\bsR_2)^3=(\bsR_2\bsR_3)^3=(\bsR_3\bsR_4)^4
=(\bsR_1\bsR_3)^2=(\bsR_1\bsR_4)^2=(\bsR_2\bsR_4)^2
=\bsI.
\end{matrix}
\end{align*}

\begin{rem}
Once we fix an orthoplicial configuration, orthoplicial symmetries can be realized by M\"obius transformations, acting on the right of $F$-matrices. Such realizations of orthoplicial symmetries depend on configurations; taking different configurations results in conjugation by a M\"obius transformation that takes one configuration to another. On the other hand, the Platonic group $\Platonic$ is independent of a choice of an orthoplicial configuration and acts on the left of $F$-matrices.
\end{rem}

\begin{lem}
The orthoplicial Platonic group $\Platonic$ is a subgroup of $\OF(\bbZ)$; namely, for any matrix $\bsP \in \Platonic$, we have $\det \bsP=\pm1$ and
\[
\bsP^\trans \bsQF \bsP=\bsQF.
\]
\end{lem}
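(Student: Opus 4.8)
The plan is to reduce the statement to a finite verification on the four generators $\bsR_1,\bsR_2,\bsR_3,\bsR_4$, since both conditions defining membership in $\OF(\bbZ)$ are preserved under products and inverses. First I would observe that each generator is manifestly an integer matrix, and the set of integer matrices is closed under multiplication; moreover, if $\bsP^\trans\bsQF\bsP=\bsQF$ then $\det\bsP=\pm1$ automatically (since $\det\bsQF\neq0$), and the set of $\bsP$ satisfying $\bsP^\trans\bsQF\bsP=\bsQF$ is visibly a group. Hence it suffices to check $\bsR_i^\trans\bsQF\bsR_i=\bsQF$ for $i=1,2,3,4$, and to note that the inverse of an integer matrix with determinant $\pm1$ is again an integer matrix, so $\Platonic\subseteq\OF(\bbZ)$ will follow.

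The bulk of the argument is then the direct computation $\bsR_i^\trans\bsQF\bsR_i=\bsQF$ for each $i$. For $\bsR_1,\bsR_2,\bsR_3$ this is essentially free: each is a permutation matrix swapping two of the first three coordinate directions while fixing the fourth and the $\mu$-coordinate, and $\bsQF$ is symmetric under permuting its first four indices among $\{1,2,3,4\}$ (the $(i,i)$ entries are all $1$ for $i\le4$, the off-diagonal entries among the first four are all $0$, and each of the first four couples to the $\mu$-coordinate with the same entry $-1$); since $\bsR_1,\bsR_2,\bsR_3$ only permute indices within $\{1,2,3\}\subset\{1,2,3,4\}$, conjugating $\bsQF$ by them does nothing. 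The one genuinely non-trivial check is $\bsR_4$, which is not a permutation matrix: it sends $\bsv_4\mapsto -\bsv_4+2\bsv_\mu$ while fixing the others, reflecting the fact that $S_4$ and $S_8$ are swapped and $2\bsv_\mu=\bsv_4+\bsv_8$. Here I would just multiply out $\bsR_4^\trans\bsQF\bsR_4$ explicitly and verify it equals $\bsQF$; this is a small $5\times5$ calculation.

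The step I expect to require the most care — though it is still routine — is the $\bsR_4$ computation, precisely because $\bsR_4$ mixes the $\zeta_4$ and $\zeta_\mu$ coordinates. A cleaner way to organize it is to use the Gramian relation: by Lemma~\ref{lem:Gramian} and Lemma~\ref{lem:Antipodal}, if $\bsP\in\Platonic$ arises from a M\"obius transformation $\bsM$ of some fixed configuration via $\bsP\bsF=\bsF\bsM$, then $\bsP\bsQF^{-1}\bsP^\trans=\tfrac12\bsP\bsGSigmaF\bsP^\trans=\tfrac12\bsP\bsF\bsQSigma\bsF^\trans\bsP^\trans=\tfrac12\bsF\bsM\bsQSigma\bsM^\trans\bsF^\trans=\tfrac12\bsF\bsQSigma\bsF^\trans=\bsQF^{-1}$, using Lemma~\ref{lem:MobiusInvariance}; inverting gives $\bsP^\trans\bsQF\bsP=\bsQF$. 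To make this rigorous one needs that each $\bsR_i$ genuinely records an orthoplicial symmetry realized by a M\"obius transformation (which the discussion preceding the lemma already establishes: $\bsR_1,\bsR_2,\bsR_3$ swap pairs of spheres, and $\bsR_4$ swaps $S_4,S_8$), so the conceptual route avoids any matrix multiplication at all. I would present the direct verification for $\bsR_1,\bsR_2,\bsR_3$ as immediate and either compute $\bsR_4^\trans\bsQF\bsR_4$ by hand or invoke the Gramian argument uniformly; the integrality and the determinant bound then come for free as noted.
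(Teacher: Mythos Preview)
Your proposal is correct and matches the paper's own proof, which simply states that $\det\bsR_i=-1$ and $\bsR_i^\trans\bsQF\bsR_i=\bsQF$ can be checked by direct computation for each generator $\bsR_i\in\calR$. One small slip: $\bsR_3$ swaps coordinates $3$ and $4$, not two of the first three, so $\bsR_1,\bsR_2,\bsR_3$ permute within $\{1,2,3,4\}$ rather than $\{1,2,3\}$; since you already observed that $\bsQF$ is symmetric under all permutations of the first four indices, your argument is unaffected.
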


\begin{proof}
For each generator $\bsR=\bsR_i \in \calR$, $\det \bsR_i=-1$ and $\bsR^\trans \bsQF \bsR=\bsQF$ can be checked by direct computation.
\end{proof}

\begin{defn}
The \emph{oriented orthoplicial Platonic group} $\Platonic^+<\Platonic$ is the subgroup consisting of matrices with determinant $+1$, i.e.\;$\Platonic^+:=\Platonic \cap \SOF^+(\bbZ)$.
\end{defn}

The oriented Platonic group $\Platonic^+$ corresponds to the oriented symmetry group of the 4-orthoplex, and it is the index 2 kernel of the determinant on  the Platonic group $\Platonic$. Since every generator $\bsR_i \in \calR$ of $\Platonic$ has determinant $-1$, it follows that $\Platonic^+$ consists of elements that can be, and can only be, written as even-length words in the generators $\calR=\{\bsR_i\}$ of $\Platonic$; hence $\Platonic^+$ is generated by $\{\bsR_i\bsR_j \mid \bsR_i, \bsR_j \in \calR\}$, which can easily be reduced to
\begin{align*}
\calR^+:=\{\bsR_1\bsR_i \mid \bsR_1 \neq \bsR_i \in \calR\}.
\end{align*}
using the relations $\bsR_i^2=\bsI$ for all $\bsR_i \in \calR$.

\subsection{Integral Configurations}

An orthoplicial Platonic configuration $\scrV$ is said to be \emph{integral} if the bends of all constituent spheres are integers. We write $\scrB(\scrV)$ for the set $\{b(S) \mid S \in \scrV\} \subset \bbZ$ of all integers appearing as bends of constituent spheres in $\scrV$, and write $\scrB^+(\scrV):=\scrB(\scrV) \cap \bbN \subset \bbN$. An integral Platonic configuration $\scrV$ is said to be \emph{primitive} if $\gcd \scrB(\scrV)=1$. The standard configuration $\scrV_0$ in \hyperref[exa:V0]{Example~\ref*{exa:V0}}, the configuration $\scrV_1$ in \hyperref[exa:V1]{Example~\ref*{exa:V1}}, and the configuration $\scrV_{7\rmd}$ in \hyperref[exa:V7d]{Example~\ref*{exa:V7d}} are examples of primitive orthoplicial Platonic configurations.

Given an admissibly ordered orthoplicial Platonic configuration $\scrV$, the second column vector $\bsb=\bsb(\scrV):=(b_1,b_2,b_3,b_4,b_\mu)^\trans$ of its $F$-matrix $\bsF$ will be referred to as the \emph{bend vector} of $\scrV$. The following lemma gives a somewhat subtle characterization of integral/primitive configurations in terms of the bend vector.

\begin{prop} \label{prop:IntegralPlatonic}
Let $\scrV$ be an orthoplicial Platonic configuration $\scrV$ with its bend vector $\bsb=\bsb(\scrV)=(b_1, b_2, b_3, b_4, b_\mu)$. Then, $\scrV$ is integral if and only if $\bsb$ is integral; moreover, $\scrV$ is primitive if and only if $\bsb$ is primitive.
\end{prop}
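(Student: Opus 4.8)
The plan is to reduce both equivalences to the single quadratic identity $F(\bsb)=0$ furnished by \hyperref[thm:DGM]{Theorem~\ref*{thm:DGM}}, together with the decompression relation $\bsV=\bsD\bsF$ of \hyperref[coro:Antipodal]{Corollary~\ref*{coro:Antipodal}}. Reading off the second column of $\bsV=\bsD\bsF$, the bends of the eight constituent spheres of $\scrV$ are $b_1,b_2,b_3,b_4$ together with $b_{4+i}=2b_\mu-b_i$ for $i=1,2,3,4$, so that $\scrB(\scrV)=\{b_1,b_2,b_3,b_4,\,2b_\mu-b_1,\,2b_\mu-b_2,\,2b_\mu-b_3,\,2b_\mu-b_4\}$. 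One half of the integrality claim is then immediate: if $\bsb$ is integral, every element of $\scrB(\scrV)$ is a $\bbZ$-linear combination of $b_1,\dots,b_4,b_\mu$, hence $\scrV$ is integral.

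For the converse I would show that integrality of $\scrV$ forces $b_\mu\in\bbZ$, which is the one point that is not purely formal. If $\scrV$ is integral then $b_1,\dots,b_4\in\bbZ$ and $m:=b_1+b_5=2b_\mu\in\bbZ$, so it remains to see that $m$ is even. Here I invoke the relation $F(\bsb)=0$ from \eqref{eqn:DGM-diag}, which by \eqref{eqn:Fzeta} reads $2b_\mu^2-2b_\mu(b_1+b_2+b_3+b_4)+(b_1^2+b_2^2+b_3^2+b_4^2)=0$; multiplying through by $2$ and substituting $m=2b_\mu$ turns this into $m^2=2m(b_1+b_2+b_3+b_4)-2(b_1^2+b_2^2+b_3^2+b_4^2)$, whose right-hand side is even. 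Hence $m^2$ is even, and therefore so is $m$, so $b_\mu=m/2\in\bbZ$; this establishes ``$\scrV$ integral $\iff$ $\bsb$ integral''.

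For the primitivity statement I may assume $\scrV$, equivalently $\bsb$, to be integral. If a prime $p$ divides $\gcd(b_1,b_2,b_3,b_4,b_\mu)$, then $p$ divides each $b_{4+i}=2b_\mu-b_i$, so $p\mid\gcd\scrB(\scrV)$; this gives ``$\bsb$ primitive $\Rightarrow$ $\scrV$ primitive''. Conversely, suppose a prime $p$ divides every bend $b_1,\dots,b_8$. If $p$ is odd, then $p\mid b_1+b_5=2b_\mu$ forces $p\mid b_\mu$, so $p$ divides $\gcd(b_1,\dots,b_4,b_\mu)$. If $p=2$, I would use the relation $m^2=2m(b_1+b_2+b_3+b_4)-2(b_1^2+b_2^2+b_3^2+b_4^2)$ once more: with $b_1,\dots,b_4$ even its right-hand side is divisible by $4$, and a short $2$-adic descent ($4\mid m^2$ gives $2\mid m$; resubstituting $m=2m'$ then shows $m'$ even) yields $4\mid m$, so $b_\mu=m/2$ is even and again $2$ divides $\gcd(b_1,\dots,b_4,b_\mu)$. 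In either case $\bsb$ is imprimitive, which gives ``$\scrV$ primitive $\Rightarrow$ $\bsb$ primitive''.

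The only genuine subtlety is the prime-$2$ analysis just indicated: a priori $b_\mu=\tfrac12(b_1+b_5)$ could fail to be an integer, or could fail to be even when all eight bends are even, and it is precisely the Descartes--Guettler--Mallows relation $F(\bsb)=0$ that rules out both possibilities. Everything else is routine bookkeeping with the decompression matrix $\bsD$.
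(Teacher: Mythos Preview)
Your proof is correct and follows essentially the same route as the paper: both use the decompression relation $b_{4+i}=2b_\mu-b_i$ from \hyperref[coro:Antipodal]{Corollary~\ref*{coro:Antipodal}} together with the Descartes--Guettler--Mallows relation $F(\bsb)=0$ from \hyperref[thm:DGM]{Theorem~\ref*{thm:DGM}} to control the $2$-adic behaviour of $b_\mu$, and both iterate the argument once (with $b_i=2q_i$) for the $p=2$ case of primitivity. Your parity extraction is in fact a bit cleaner than the paper's: you multiply $F(\bsb)=0$ by $2$ to get $m^2=2m(b_1+\cdots+b_4)-2(b_1^2+\cdots+b_4^2)$ and read off that $m=2b_\mu$ is even directly, whereas the paper solves the quadratic for $2b_\mu$ via the quadratic formula and argues that the integer square root of the discriminant has the same parity as $b_1+b_2+b_3+b_4$.
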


\begin{proof}
Let us first prove the statement on the integrality.
If the bend vector $\bsb$ is integral, the first four bends $b_1,b_2,b_3,b_4$, as well as the remaining complimentary bends $b_5=2b_\mu-b_1,b_6=2b_\mu-b_2,b_7=2b_\mu-b_3,b_8=2b_\mu-b_4$ by \hyperref[coro:Antipodal]{Corollary~\ref*{coro:Antipodal}}, are all integral. Conversely, suppose that $\scrV$ is integral, i.e.\;all bends $b_1, \cdots, b_8$ are integers. It follows immediately that $2b_\mu=b_1+b_5$ is an integer; we need to show that $2b_\mu$ is an even integer so that $b_\mu$ is an integer. Solving the quadratic equation
\[
F(\bsb)=2b_\mu^2-2b_\mu(b_1+b_2+b_3+b_4)+(b_1^2+b_2^2+b_3^2+b_4^2)=0.
\]
from \hyperref[eqn:DGM-diag]{(\ref*{eqn:DGM-diag})} in \hyperref[thm:DGM]{Theorem~\ref*{thm:DGM}} explicitly for $b_\mu$, we find
\begin{align} \label{eqn:Roots}
2b_\mu=b_1+b_2+b_3+b_4 \pm \sqrt{(b_1+b_2+b_3+b_4)^2-2(b_1^2+b_2^2+b_3^2+b_4^2)}.
\end{align}
Since $2b_\mu$ and $b_1+b_2+b_3+b_4$ are integers, it follows that
\[
\sqrt{(b_1+b_2+b_3+b_4)^2-2(b_1^2+b_2^2+b_3^2+b_4^2)}
\]
is an integer. Checking the parity, we have
\begin{align}\begin{split} \label{eqn:Parity}
\hspace{14mm}&\sqrt{(b_1+b_2+b_3+b_4)^2-2(b_1^2+b_2^2+b_3^2+b_4^2)}\\
&\equiv (b_1+b_2+b_3+b_4)^2-2(b_1^2+b_2^2+b_3^2+b_4^2)\\
&\equiv (b_1+b_2+b_3+b_4)^2 \equiv b_1+b_2+b_3+b_4 \pmod{2}.
\end{split}\end{align}
Returning to the equation \hyperref[eqn:Roots]{(\ref*{eqn:Roots})}, we now see that $2b_\mu$ is an even integer. Hence, $b_\mu$ is indeed an integer, and $\bsb=(b_1,b_2,b_3,b_4,b_\mu)^\trans$ is an integral vector as desired.

Let us now assume the integrality and prove the primitivity statement. If the bend vector $\bsb$ is not primitive, i.e.\;$d:=\gcd(b_1,b_2,b_3,b_4,b_\mu)\neq1$, then $d$ divides the first four bends $b_1,b_2,b_3,b_4,$ as well as the remaining complimentary bends $b_5=2b_\mu-b_1,b_6=2b_\mu-b_2,b_7=2b_\mu-b_3,b_8=2b_\mu-b_4$. Conversely, suppose that $\scrV$ is not primitive, i.e.\;$d:=\gcd(b_1,\cdots,b_8)\neq1$. It follows immediately that $d$ divides $2b_\mu=b_1+b_5$. If $d$ is odd, then $d$ must also divide $b_\mu$, and hence $\bsb=(b_1,b_2,b_3,b_4,b_\mu)$ is not primitive. So, let us now assume that $d$ is even. Then, all bends are even, say $b_k=2q_k$, $k=1,\cdots,8$. Together with \hyperref[eqn:Roots]{(\ref*{eqn:Roots})}, we obtain
\begin{align} \label{eqn:RootsAgain}
b_\mu=q_1+q_2+q_3+q_4 \pm \sqrt{(q_1+q_2+q_3+q_4)^2-2(q_1^2+q_2^2+q_3^2+q_4^2)}.
\end{align}
Since $2b_\mu, q_1,q_2,q_3,q_4$ are all integers, we deduce that
\[
\hspace{14mm}\sqrt{(q_1+q_2+q_3+q_4)^2-2(q_1^2+q_2^2+q_3^2+q_4^2)} \hspace{14mm}
\]
is also an integer, which must have, cf.\;\hyperref[eqn:Parity]{(\ref*{eqn:Parity})}, the same parity as $q_1+q_2+q_3+q_4$. Returning to the equation \hyperref[eqn:RootsAgain]{(\ref*{eqn:RootsAgain})}, we now see that $b_\mu$ is an even integer; components of $\bsb=(b_1,b_2,b_3,b_4,b_\mu)$ are all even, and $\bsb$ is not primitive.
\end{proof}

\begin{rem}
In the hindsight, the primitivity statement in \hyperref[prop:IntegralPlatonic]{Proposition~\ref*{prop:IntegralPlatonic}} further justifies our choice of the antipodal vector in the definition of $F$-matrix.
\end{rem}

\section{Orthoplicial Apollonian Packings and Apollonian Group} \label{sec:Apollonian}

\subsection{Apollonian Packings}

If $\scrF$ is a quadruple of pairwise tangent spheres, and $\scrV$ and $\scrV'$ are two orthoplicial configurations such that $\scrF=\scrV \cap \scrV'$, we say that the configurations $\scrV$ and $\scrV'$ are \emph{adjacent} along $\scrF$.

\begin{exa} \label{exa:Adjacent}
Let $\scrV'_0$ be an orthoplicial configuration given in the following table, which lists the inversive coordinates of each constituent sphere $S_k$, as well as its oriented radius and its oriented center if $S_k$ is not planar. The configuration $\scrV'_0$ shares the first four spheres $\scrF_0=\{S_1,S_2,S_3,S_4\}$ with the standard configuration $\scrV_0$ defined in \hyperref[ssec:PlatonicConfig]{\S\ref*{ssec:PlatonicConfig}}; indeed, $\scrV_0$ and $\scrV'_0$ are adjacent along $\scrF_0$ since $\scrV_0 \cap \scrV'_0=\scrF_0$.

\vspace{3mm}
{\small
\begin{center}
\begin{tabular}{|l|rrrrr|rrrr|}
\hline
$k$ & $a$ & $b$ & $\hat x$ & $\hat y$ & $\hat z$ & $r$ & $c_x$ & $c_y$ & $c_z$\\
\hline
$1$&$2$&$0$&$0$&$0$&$1$&-&-&-&-\\
$2$&$2$&$0$&$0$&$0$&$-1$&-&-&-&-\\
$3$&$1$&$1$&$\sqrt{2}$&$0$&$0$&$1$&$\sqrt{2}$&$0$&$0$\\
$4$&$1$&$1$&$0$&$\sqrt{2}$&$0$&$1$&$0$&$\sqrt{2}$&$0$\\
$5$&$8$&$2$&$2\sqrt{2}$&$2\sqrt{2}$&$-1$&$1/2$&$\sqrt{2}$&$\sqrt{2}$&$-1/2$\\
$6$&$8$&$2$&$2\sqrt{2}$&$2\sqrt{2}$&$1$&$1/2$&$\sqrt{2}$&$\sqrt{2}$&$1/2$\\
$7$&$9$&$1$&$\sqrt{2}$&$2\sqrt{2}$&$0$&$1$&$\sqrt{2}$&$2\sqrt{2}$&$0$\\
$8$&$9$&$1$&$2\sqrt{2}$&$\sqrt{2}$&$0$&$1$&$2\sqrt{2}$&$\sqrt{2}$&$0$\\
\hline
\end{tabular}
\end{center}
}

\begin{figure}[h]
\begin{center}
\includegraphics[height=66mm, width=86.4mm]{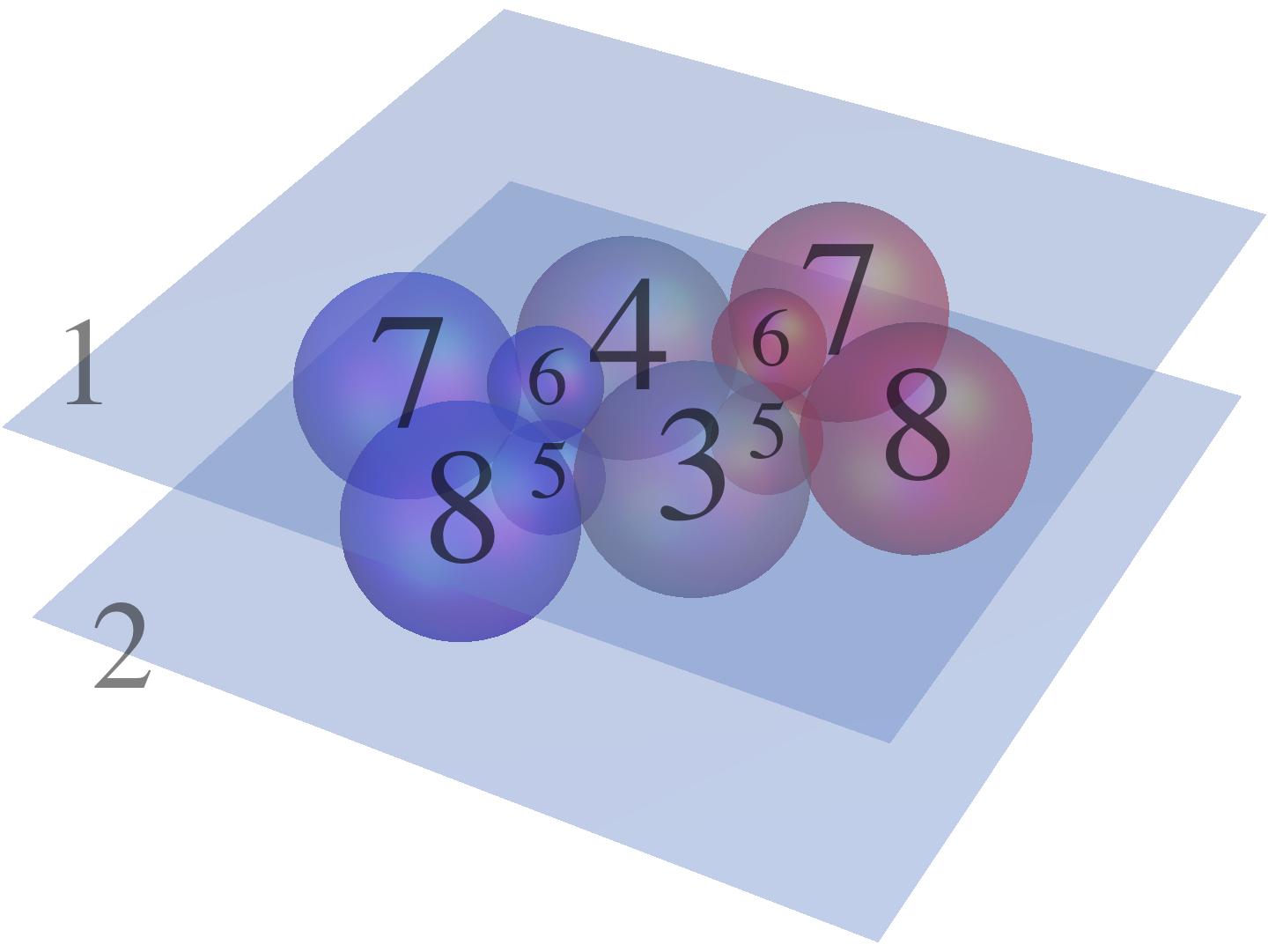}
\end{center}
\caption{The standard configuration $\scrV_0$ (gray and blue) and the configuration $\scrV$ (gray and red), adjacent to $\scrV_0$ along the common quadruple (gray), with each constituent sphere $S_k$ labeled by $k$}
\label{fig:Adjacent}
\end{figure}
\end{exa}

\begin{lem} \label{lem:Adjacent}
For any ordered quadruple $\scrF=\{S_1,S_2,S_3,S_4\}$ of pairwise tangent spheres, there exist exactly two admissibly ordered orthoplicial configurations $\scrV,\scrV'$ containing $\scrF$ as the first four spheres; they are adjacent to each other along $\scrF$, and are mapped from one to the other by the inversion about the dual sphere $S=S(\scrF)$ orthogonal to each sphere in the quadruple $\scrF$. 
\end{lem}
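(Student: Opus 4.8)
The plan is to reduce everything to a single model computation by exploiting the transitivity of the Möbius group on ordered quadruples of pairwise tangent spheres, and then to verify the claim for one convenient quadruple. First I would recall that $\Mob_3^\pm$ acts transitively on ordered quadruples of pairwise tangent spheres: given any such $\scrF$, pick a Möbius transformation $\bsM$ carrying $\{S_1,S_2,S_3,S_4\}$ to the first four spheres $\{S_1,S_2,S_3,S_4\}$ of the standard configuration $\scrV_0$ (this is standard inversive geometry — four mutually tangent spheres have, up to the Möbius group, a unique configuration, essentially because one can send the common tangent point of $S_1,S_2$ to $\infty$, normalize, etc.). Because the statement to be proved — existence of exactly two orthoplicial configurations completing $\scrF$, their adjacency, and the fact that the inversion in the dual sphere $S(\scrF)$ swaps them — is entirely Möbius-invariant (the dual sphere $S(\scrF)$ is characterized by orthogonality to $S_1,\dots,S_4$, a condition preserved by $\Mob_3^\pm$, and ``orthoplicial configuration'' is defined up to Möbius equivalence), it suffices to prove the lemma for $\scrF=\scrF_0=\{S_1,S_2,S_3,S_4\}\subset\scrV_0$.

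Next I would handle the model case $\scrF_0$ directly. From Example~\ref{exa:V0} and Example~\ref{exa:Adjacent} we already have two admissibly ordered orthoplicial configurations, $\scrV_0$ and $\scrV'_0$, whose first four spheres are exactly $\scrF_0$, and they are adjacent along $\scrF_0$. It remains to show (i) these are the only two, and (ii) the inversion in $S(\scrF_0)$ interchanges them. For (ii): one computes the dual sphere $S=S(\scrF_0)$ orthogonal to $S_1,S_2,S_3,S_4$ — using the inversive-product characterization $\varSigma(\bsv(S),\bsv(S_k))=0$ for $k=1,2,3,4$, this is four linear equations on the inversive coordinate vector of $S$, with a one-dimensional solution space, pinned down (up to orientation) by the normalization $\varSigma(\bsv(S),\bsv(S))=1$. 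One then writes the $5\times5$ matrix $\bsM_S$ of the inversion about $S$ (a conjugate of the standard-inversion matrix), checks $\bsv(S_k)\bsM_S=\bsv(S_k)$ for $k=1,2,3,4$ since $S$ is orthogonal to each $S_k$, and verifies $\bsv_j(\scrV_0)\bsM_S=\bsv_j(\scrV'_0)$ for $j=5,6,7,8$ by direct multiplication against the coordinates tabulated in Examples~\ref{exa:V0} and~\ref{exa:Adjacent}. For (i): given any orthoplicial configuration $\scrV''$ containing $\scrF_0$ as its first four spheres, each of $S_5'',\dots,S_8''$ is determined by its tangency/disjointness pattern with $S_1,\dots,S_4$ together with the global constraint; concretely, $S_5$ must be the unique sphere disjoint from $S_1$ and tangent to $S_2,S_3,S_4$ lying on a prescribed side, and the orthoplicial incidence structure (the $1$-skeleton of the $16$-cell) forces the rest. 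The cleanest way to phrase this: by Lemma~\ref{lem:Gramian} the $F$-matrix $\bsF(\scrV'')$ has prescribed Gramian $\bsGSigmaF$ and prescribed first four rows (those of $\scrF_0$), and the inversive product with the already-fixed rows together with $F(\bsv_\mu\text{-data})$ leaves exactly two solutions for the fifth row $\bsv_\mu$ — the two roots of the quadratic $F(\bsb)=0$ type relation — corresponding to $\scrV_0$ and $\scrV_0'$; alternatively one argues geometrically that completing a tangent quadruple to an orthoplicial configuration amounts to choosing one of the two orientations of a ``second'' sphere, the two choices being swapped by the inversion in $S(\scrF_0)$.

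The main obstacle I anticipate is step (i), the uniqueness-up-to-two count. One must be careful that the orthoplicial tangency graph genuinely rigidifies the completion: a priori there could be extra freedom in placing $S_5,\dots,S_8$. I would resolve this by working at the level of $F$-matrices and the Gramian equation~\eqref{eqn:Gramian}: the first four rows are fixed, and the fifth row $\bsv_\mu$ is constrained by the four linear equations $\varSigma(\bsv_\mu,\bsv_k)=-1$ for $k=1,2,3,4$ (reading off the last row of $\bsGSigmaF$) together with the single quadratic normalization $\varSigma(\bsv_\mu,\bsv_\mu)=-1$; the four linear equations cut the solution set down to an affine line (since $\bsF_0$ is nonsingular its first four rows are independent, so the linear system has rank $4$ in $\bbR^5$), and intersecting an affine line with the quadric $\varSigma(\bsv_\mu,\bsv_\mu)=-1$ yields at most two points — and we already exhibited two, namely the antipodal vectors of $\scrV_0$ and $\scrV'_0$. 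Since the whole $V$-matrix is recovered from the $F$-matrix via $\bsV=\bsD\bsF$ (Corollary~\ref{coro:Antipodal}), this pins down $\scrV''\in\{\scrV_0,\scrV'_0\}$, completing the count. Everything else is bookkeeping with the explicit $5\times5$ matrices already in hand.
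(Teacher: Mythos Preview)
Your proposal is correct and follows the same overall architecture as the paper's proof: reduce to the standard quadruple $\scrF_0$ by M\"obius transitivity, verify there the ``exactly two'' count and the inversion swap, then transport back. The one substantive difference is in how you establish the count. The paper invokes Theorem~\ref{thm:DGM} ($\bsF^\trans\bsQF\bsF=\bsQW$): with the first four rows of $\bsF$ fixed, each diagonal entry of this equation is a one-variable quadratic in a single coordinate of $\bsv_\mu$, and then the off-diagonal entries pick out exactly two compatible sign combinations, yielding $\bsF_0$ and $\bsF'_0$ explicitly. You instead use the Gramian equation of Lemma~\ref{lem:Gramian} ($\bsF\bsQSigma\bsF^\trans=\bsGSigmaF$): the last row of $\bsGSigmaF$ gives four linear constraints $\varSigma(\bsv_\mu,\bsv_k)=-1$ cutting out an affine line, and the $(5,5)$ entry $\varSigma(\bsv_\mu,\bsv_\mu)=-1$ is a quadric meeting that line in at most two points, which are then identified with the two already-exhibited antipodal vectors. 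Your version is a bit more conceptual and avoids the sign-combination inspection; the paper's version has the advantage of producing the explicit entries of $\bsF'_0$ as a byproduct. Both routes rest on the same pair of dual matrix identities, so the difference is organizational rather than mathematical.
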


\begin{proof}
We shall first verify the claim directly for the quadruple $\scrF_0$ shared by the standard configuration $\scrV_0$ and the configuration $\scrV'_0$ in \hyperref[exa:Adjacent]{Example~\ref*{exa:Adjacent}}. For any admissibly orthoplicial configuration containing $\scrF_0$ as the first four spheres, its $F$-matrix is given by a matrix of the form
\[
\bsF=\medpmatrix{
2&0&0&0&1\\
2&0&0&0&-1\\
1&1&\sqrt{2}&0&0\\
1&1&0&\sqrt{2}&0\\
a_\mu&b_\mu&\hat x_\mu&\hat y_\mu&\hat z_\mu
}.
\]
By \hyperref[thm:DGM]{Theorem~\ref*{thm:DGM}}, this matrix must satisfy the equation \hyperref[eqn:DGM]{(\ref*{eqn:DGM})}, i.e.\;$\bsF^\trans \bsQF \bsF=\bsQW$; in particular, the equations \hyperref[eqn:DGM-diag]{(\ref*{eqn:DGM-diag})} are quadratic in one variable with solutions
\[
a_\mu=3\pm2, \quad
b_\mu=1, \quad
\hat x_\mu=\frac{1}{\,2\,}(\sqrt{2}\pm\sqrt{2}), \quad
\hat y_\mu=\frac{1}{\,2\,}(\sqrt{2}\pm\sqrt{2}), \quad
\hat z_\mu=0.
\]
By inspecting the possible sign combinations, we find that there are exactly two $F$-matrices of the above form, satisfying the full matrix equation \hyperref[eqn:DGM]{(\ref*{eqn:DGM})}:
\[
\bsF_0=\medpmatrix{
2&0&0&0&1\\
2&0&0&0&-1\\
1&1&\sqrt{2}&0&0\\
1&1&0&\sqrt{2}&0\\
1&1&0&0&0
},
\qquad
\bsF'_0:=\medpmatrix{
2&0&0&0&1\\
2&0&0&0&-1\\
1&1&\sqrt{2}&0&0\\
1&1&0&\sqrt{2}&0\\
5&1&\sqrt{2}&\sqrt{2}&0
}.
\]
The first matrix $\bsF_0$ is the $F$-matrix of the standard configuration $\scrV_0$, and the second matrix $\bsF'_0$ is the $F$-matrix of the configuration $\scrV'_0$ in \hyperref[exa:Adjacent]{Example~\ref*{exa:Adjacent}}. Hence, these configurations are indeed the only admissibly ordered orthoplicial configurations containing $\scrF_0$ as the first four spheres. They are adjacent to each other along $\scrF_0$. One can also check that they are mapped from one to the other by the inversion along the dual sphere $S(\scrF_0)$, given explicitly as a plane $x+y=\sqrt{2}$.

For the general case, let $\scrF$ be an ordered quadruple of pairwise tangent spheres. Choose a M\"obius transformation that takes $\scrF_0$ to $\scrF$ in the order-preserving fashion. The images $\scrV,\scrV'$ of the configurations $\scrV_0,\scrV'_0$ under this transformation are the only configurations containing $\scrF$ as the first four spheres, and they are mapped from one to the other by the reflection about $S(\scrF)$ which is the image of $S(\scrF_0)$.
\end{proof}

Given a quadruple $\scrF \subset \scrV$ of pairwise tangent spheres in an orthoplicial Platonic configuration $\scrV$ of eight spheres, inverting the configuration $\scrV$ along this dual sphere $S=S(\scrF)$ yields a new orthoplicial configuration $\scrV'$ adjacent to $\scrV$ along $\scrF$. Each orthoplicial configuration contains 16 quadruples of pairwise tangent spheres, and the 16 corresponding inversions yield 16 adjacent configurations. Successively applying these inversions, we obtain an infinite family of orthoplicial configurations. We refer to the union of all spheres appearing in this family of orthoplicial configurations as an \emph{orthoplicial Apollonian packing}.

It follows from the definition that \emph{all} orthoplicial Apollonian packings in the M\"obius space is equivalent under the action of M\"obius transformations. We will distinguish orthoplicial Apollonian packings in our coordinatization $\hat \bbR^3$ of the M\"obius space. An orthoplicial Apollonian packing is said to be \emph{bounded} or \emph{ball type} if the bends of all spheres are positive except for a unique exceptional sphere whose bend is strictly negative; the exceptional sphere is the largest sphere in the packing, and it encloses all other spheres in packing. See \hyperref[fig:P1]{Figure~\ref*{fig:P1}} for an example of a bounded orthoplicial Apollonian packing.

\begin{figure}[h]
\begin{center}
\includegraphics[trim = 70px 91px 51px 62px, clip, width=35mm]{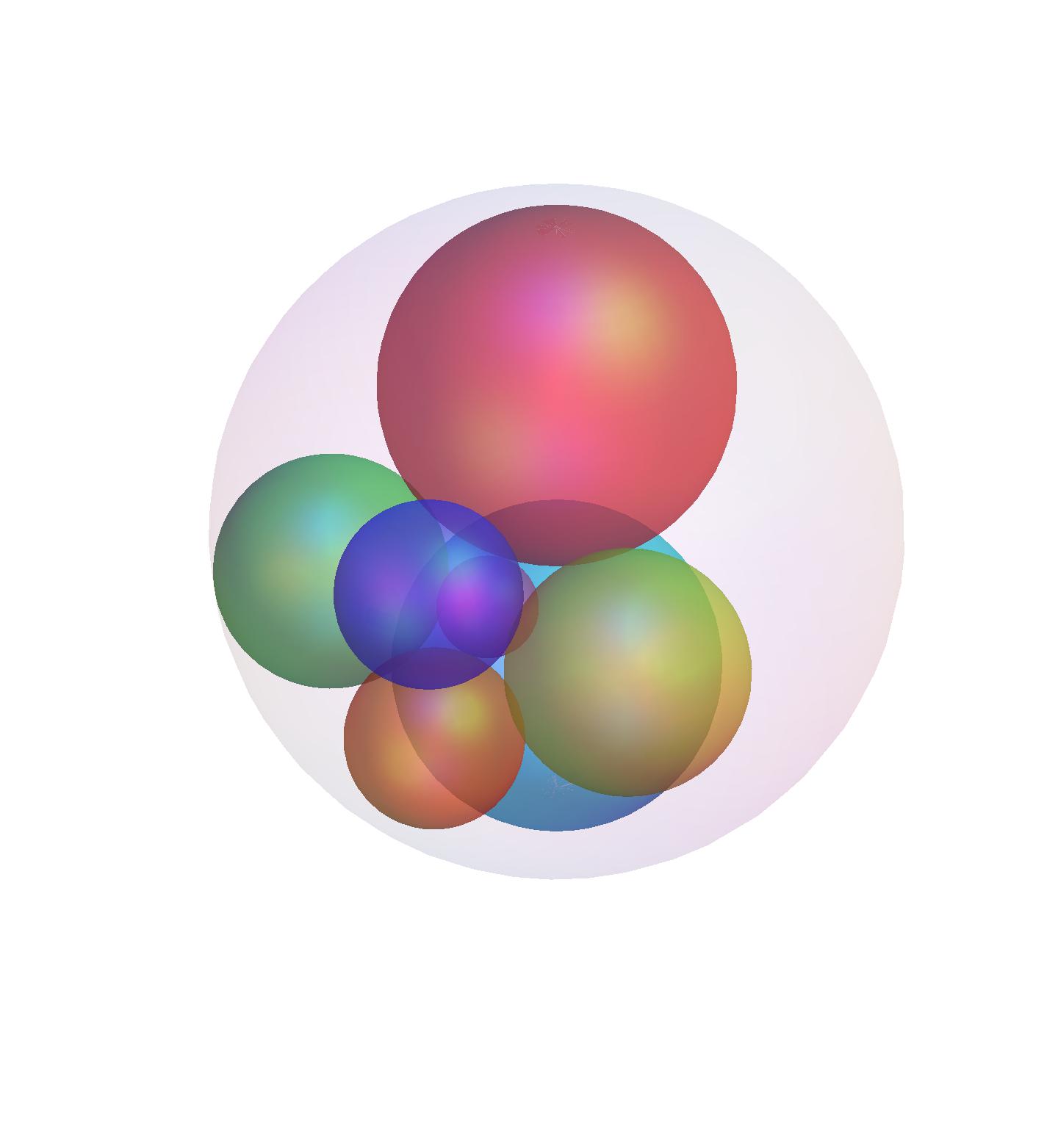}
\hspace{3mm}
\includegraphics[trim = 70px 91px 51px 62px, clip, width=35mm]{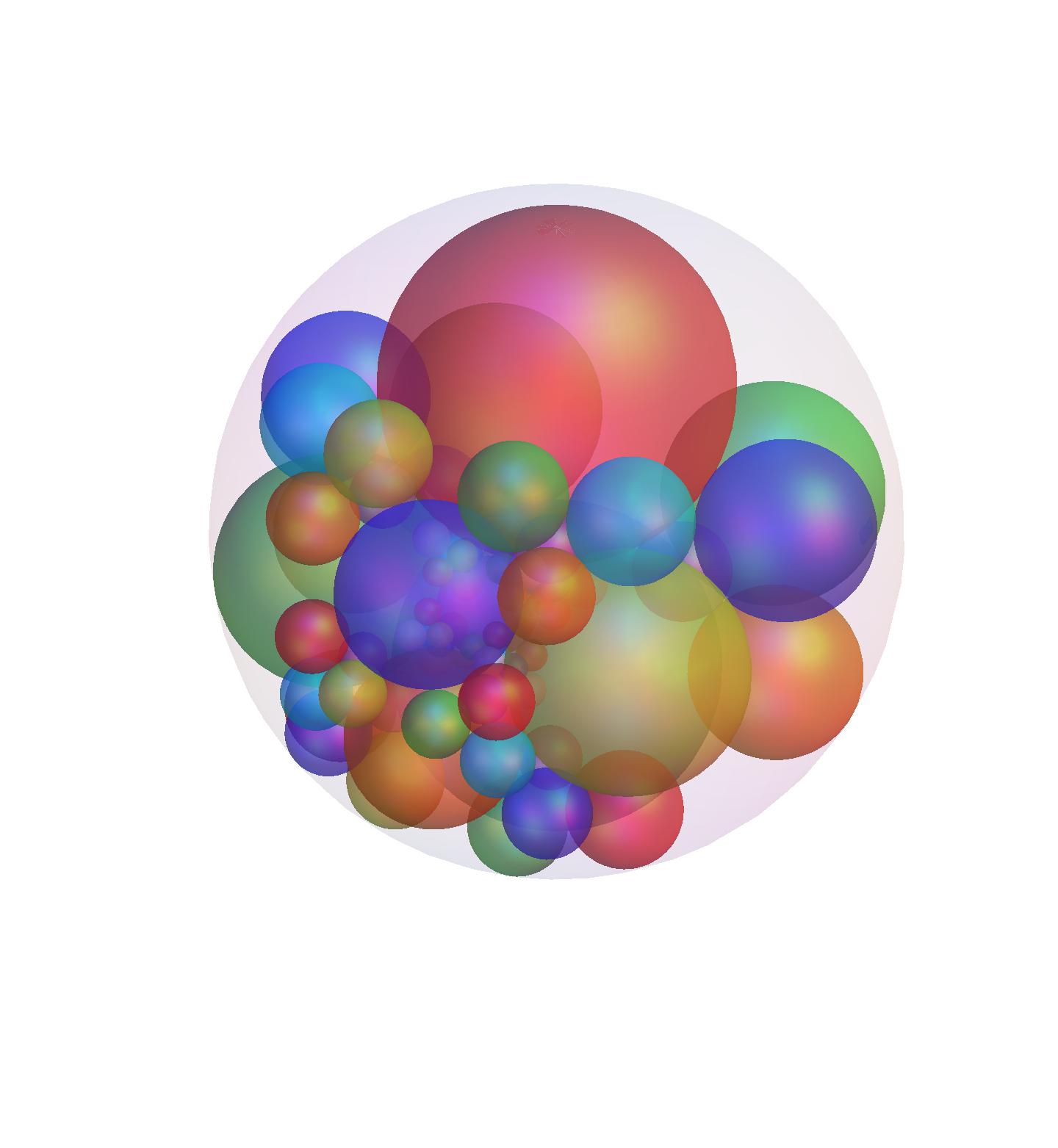}
\hspace{3mm}
\includegraphics[trim = 70px 91px 51px 62px, clip, width=35mm]{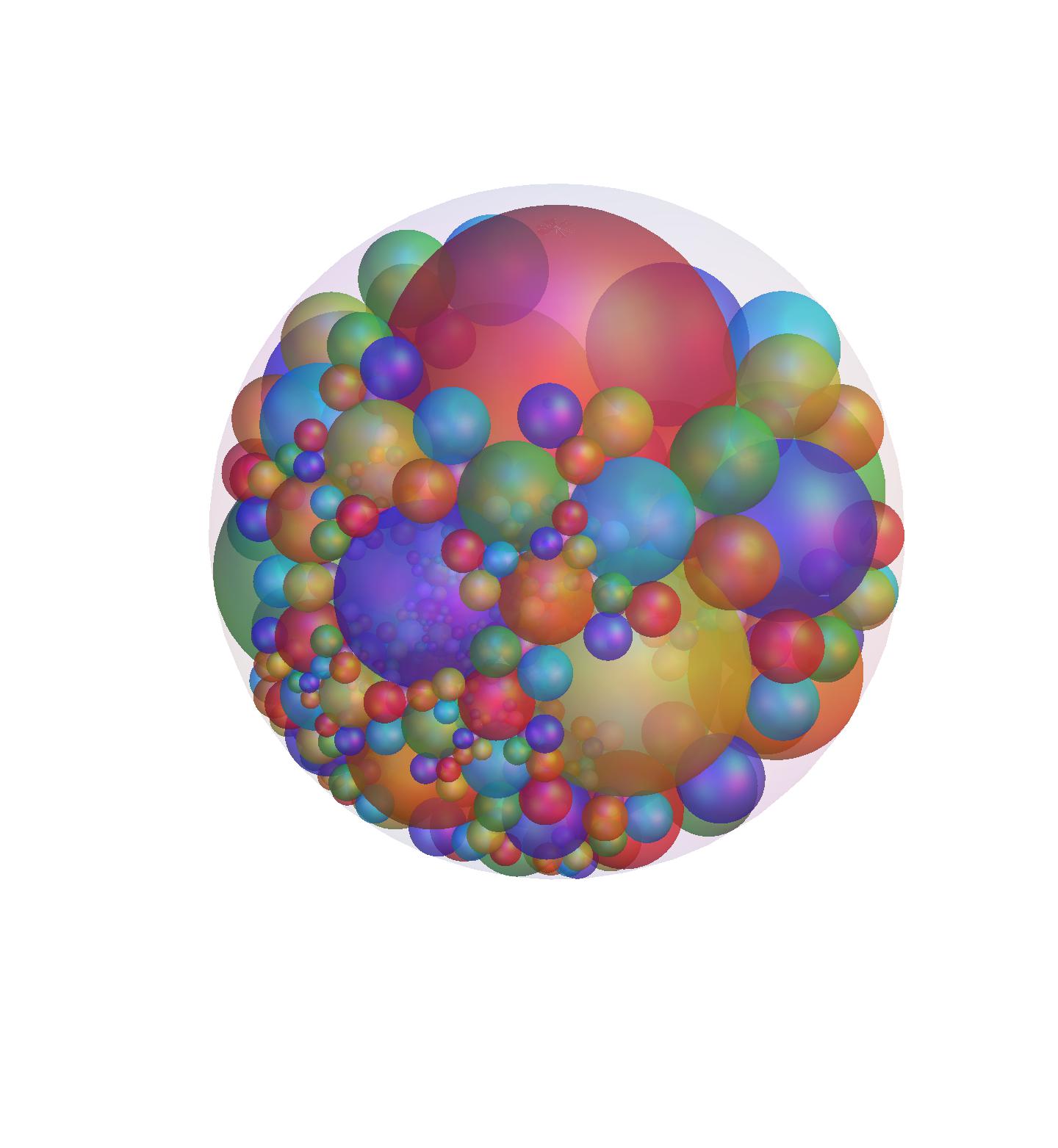}
\par \vspace{6mm}
\includegraphics[trim = 70px 91px 51px 62px, clip, width=78mm]{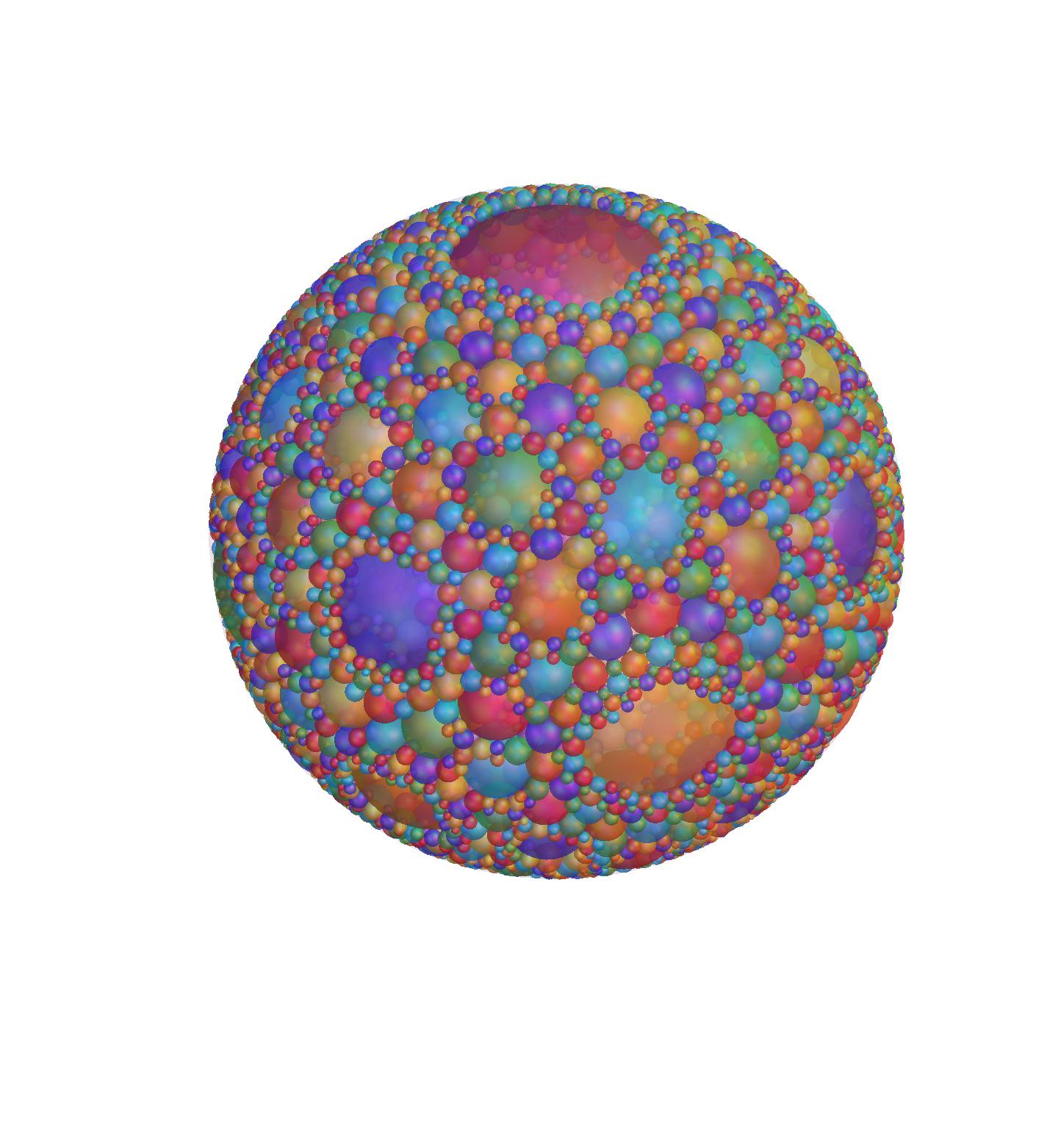}
\end{center}
\caption{The bounded orthoplicial Apollonian packing $\scrP_1$ generated form the orthoplicial Platonic configuration $\scrV_1$}
\label{fig:P1}
\end{figure}

\begin{rem}
Some of the basic properties of these packings will not be discussed in full detail in this article, and they will be treated elsewhere. In particular, we \emph{do not} establish that the orthoplicial packing is indeed a \emph{sphere packing}, in a sense that the union of spheres have disjoint orienting regions, and the spheres can only have pairwise tangency; this is true, but we decided it is better not to include the cumbersome proof of this fact in the present article. Once we establish this fact, the general classification of sphere packings applies, namely sphere packings can be classified into four types based on the number and the sign of bends of exceptional spheres in the sphere packing: (i) \emph{bounded} or \emph{ball type}, in which the bends of all spheres are positive except for a unique exceptional sphere whose bend is negative, (ii) \emph{planar} or \emph{slab type}, in which the bends of all spheres are positive except for two exceptional spheres through $\infty$ with bend zero, (iii) \emph{half-space type}, in which the bends of all spheres are positive except for a unique exceptional sphere through $\infty$ with bend zero, and (iv) \emph{full-space type}, in which the bends of all spheres are positive with no exceptional spheres. These four cases correspond to four different position of $\infty$ relative to the packing.
\end{rem}

The following lemma describes the mechanism of the inversions interchanging adjacent configurations in terms of coordinate vectors and the antipodal vectors, and will be instrumental in order to study orthoplicial Apollonian packings.

\begin{lem} \label{lem:Inversion}
Let $\scrF$ be an ordered quadruple of pairwise tangent spheres with inversive coordinate vectors $\bsv_1, \bsv_2, \bsv_3, \bsv_4$, and let $\scrV,\scrV'$ be admissibly ordered configurations that are adjacent to each other along the quadruple $\scrF$. If $\bsv_\mu, \bsv'_\mu$ are the antipodal vectors of the configurations $\scrV,\scrV'$ respectively, then we have
\begin{align} \label{eqn:Inversion}
\bsv_\mu+\bsv'_\mu=\bsv_1+\bsv_2+\bsv_3+\bsv_4.
\end{align}
\end{lem}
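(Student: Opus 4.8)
The plan is to establish \eqref{eqn:Inversion} first for the standard adjacent pair and then transport it to an arbitrary quadruple by a M\"obius transformation, mirroring the structure of the proofs of Lemma~\ref{lem:Gramian} and Lemma~\ref{lem:Adjacent}. For the standard pair, recall from the proof of Lemma~\ref{lem:Adjacent} that the only two admissibly ordered orthoplicial configurations having $\scrF_0=\{S_1,S_2,S_3,S_4\}$ as their first four spheres are $\scrV_0$ and $\scrV'_0$, with $F$-matrices $\bsF_0$ and $\bsF'_0$ whose fifth rows are their antipodal vectors $\bsv_\mu(\scrV_0)=(1,1,0,0,0)$ and $\bsv_\mu(\scrV'_0)=(5,1,\sqrt{2},\sqrt{2},0)$. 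Summing the first four rows of $\bsF_0$ gives $\bsv_1+\bsv_2+\bsv_3+\bsv_4=(6,2,\sqrt{2},\sqrt{2},0)$, and one reads off directly that $\bsv_\mu(\scrV_0)+\bsv_\mu(\scrV'_0)=\bsv_1+\bsv_2+\bsv_3+\bsv_4$, which is \eqref{eqn:Inversion} for the pair $\scrV_0,\scrV'_0$.

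For an arbitrary ordered quadruple $\scrF$ of pairwise tangent spheres, I would choose a matrix $\bsM$ representing a M\"obius transformation with $\scrF=\scrF_0\bsM$ in the order-preserving sense; such an $\bsM$ exists because any two quadruples of pairwise tangent spheres are M\"obius-equivalent, and this is exactly the input used at the end of the proof of Lemma~\ref{lem:Adjacent}, which moreover identifies $\scrV_0\bsM$ and $\scrV'_0\bsM$ as the only two admissibly ordered configurations containing $\scrF$ as their first four spheres. By Corollary~\ref{coro:Antipodal} the antipodal vector of a configuration is intrinsic (independent of the admissible ordering), and both sides of \eqref{eqn:Inversion} are symmetric in $\scrV$ and $\scrV'$, so there is no loss in assuming $\scrV=\scrV_0\bsM$ and $\scrV'=\scrV'_0\bsM$. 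Then Lemma~\ref{lem:Antipodal} gives $\bsv_\mu=\bsv_\mu(\scrV_0)\bsM$ and $\bsv'_\mu=\bsv_\mu(\scrV'_0)\bsM$, while $\bsv_k=\bsv_k(\scrF_0)\bsM$ for $k=1,2,3,4$; multiplying the standard-case identity on the right by $\bsM$ and using linearity of $\bsv\mapsto\bsv\bsM$ yields \eqref{eqn:Inversion} in general.

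The only point requiring care, rather than a genuine obstacle, is the bookkeeping with admissible orderings: ``adjacent along $\scrF$'' must be read with the ordered quadruple $\scrF$ occupying the first four slots of both $\scrV$ and $\scrV'$, which can always be arranged by composing with a permutation of $\{S_1,\dots,S_4\}$ coming from the Platonic group $\Platonic$, an operation that alters neither the antipodal vectors nor $\bsv_1+\bsv_2+\bsv_3+\bsv_4$. As an alternative viewpoint worth noting, \eqref{eqn:Inversion} is, entry by entry of the fifth row, Vieta's relation for the quadratic equations \eqref{eqn:DGM-diag}: with the first four rows of the $F$-matrix fixed, each entry of the fifth row satisfies a quadratic of the form $2\zeta_\mu^2-2(\zeta_1+\zeta_2+\zeta_3+\zeta_4)\zeta_\mu+c=0$ in that column, whose two roots sum to the corresponding entry of $\bsv_1+\bsv_2+\bsv_3+\bsv_4$; turning this into a complete proof still needs Lemma~\ref{lem:Adjacent} to know that exactly two fifth rows are admissible and that they realize opposite roots in every column where the roots are distinct, so the transport argument above is the cleaner route.
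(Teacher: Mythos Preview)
Your proof is correct and follows essentially the same approach as the paper: verify \eqref{eqn:Inversion} directly for the standard adjacent pair $\scrV_0,\scrV'_0$ using the explicit $F$-matrices from the proof of Lemma~\ref{lem:Adjacent}, then transport to an arbitrary quadruple via a M\"obius transformation and Lemma~\ref{lem:Antipodal}. Your additional remarks on ordering bookkeeping and the Vieta interpretation are accurate and add clarity, but the core argument matches the paper's.
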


\begin{proof}
The equation \hyperref[eqn:Inversion]{(\ref*{eqn:Inversion})} can be verified directly for the quadruple $\scrF_0$ shared by the standard configuration and the configuration $\scrV'_0$ in \hyperref[exa:Adjacent]{Example~\ref*{exa:Adjacent}}; in this case, from the $F$-matrices $\bsF_0,\bsF'_0$ in the proof of \hyperref[lem:Adjacent]{Lemma~\ref*{lem:Adjacent}}, we have
\[
\bsv_\mu+\bsv'_\mu=(6,2,\sqrt{2},\sqrt{2},0)=\bsv_1+\bsv_2+\bsv_3+\bsv_4.
\]
For the general case, let $\bsF,\bsF'$ be the $F$-matrices of the configurations $\scrV,\scrV'$. There is a M\"obius transformation that takes $\scrF_0$ to $\scrF$, $\scrV_0$ to $\scrV$, and $\scrV'_0$ to $\scrV'$. Let $\bsM$ be a matrix representing this transformation so that $\bsF=\bsF_0 \bsM$ and $\bsF'=\bsF'_0 \bsM$. Then, by \hyperref[lem:Antipodal]{Lemma~\ref*{lem:Antipodal}}, the equation \hyperref[eqn:Inversion]{(\ref*{eqn:Inversion})} for the configurations $\scrV,\scrV'$ follows from the equation \hyperref[eqn:Inversion]{(\ref*{eqn:Inversion})} for the configurations $\scrV_0,\scrV'_0$.
\end{proof}

As an immediate corollary, we observe that the inversion interchanging adjacent configurations can be captured by multiplying by a matrix on the \emph{left} of $F$-matrices.

\begin{coro}
Let $\bsF,\bsF'$ be the $F$-matrices of admissibly ordered orthoplicial configurations $\scrV,\scrV'$ that are adjacent to each other along the quadruple $\scrF$ of pairwise tangent spheres, appearing as the first four spheres in both $\scrV,\scrV'$. Then, left multiplication by the matrix
\[
\bsR_\rmf:=\medpmatrix{
1&0&0&0&0\\
0&1&0&0&0\\
0&0&1&0&0\\
0&0&0&1&0\\
1&1&1&1&-1},
\]
interchanges the $F$-matrices $\bsF$ and $\bsF'$, i.e.\;$\bsR_\rmf \bsF=\bsF'$ and $\bsR_\rmf \bsF'=\bsF$.
\end{coro}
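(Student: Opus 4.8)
The plan is to deduce this corollary directly from \hyperref[lem:Inversion]{Lemma~\ref*{lem:Inversion}} together with the structure of the decompression matrix $\bsD$. First I would observe that the first four rows of $\bsF$ and $\bsF'$ agree: both equal the inversive coordinate vectors $\bsv_1,\bsv_2,\bsv_3,\bsv_4$ of the common quadruple $\scrF$, and left multiplication by $\bsR_\rmf$ visibly fixes the first four rows since the top-left $4\times 4$ block of $\bsR_\rmf$ is the identity and its first four entries in the last column are zero. So the only content is in the fifth row.

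Next I would read off what $\bsR_\rmf$ does to the fifth row of $\bsF$. If the fifth row of $\bsF$ is the antipodal vector $\bsv_\mu$, then the fifth row of $\bsR_\rmf\bsF$ is $\bsv_1+\bsv_2+\bsv_3+\bsv_4-\bsv_\mu$, which by \hyperref[eqn:Inversion]{(\ref*{eqn:Inversion})} in \hyperref[lem:Inversion]{Lemma~\ref*{lem:Inversion}} equals precisely $\bsv'_\mu$, the antipodal vector of $\scrV'$. Hence $\bsR_\rmf\bsF$ has the same first four rows as $\bsF'$ and the same fifth row as $\bsF'$, so $\bsR_\rmf\bsF=\bsF'$. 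Swapping the roles of $\scrV$ and $\scrV'$ (the adjacency relation and \hyperref[lem:Inversion]{Lemma~\ref*{lem:Inversion}} are symmetric in the two configurations) gives $\bsR_\rmf\bsF'=\bsF$ by the identical argument; alternatively, one checks $\bsR_\rmf^2=\bsI$ by direct computation, which combined with $\bsR_\rmf\bsF=\bsF'$ immediately yields $\bsR_\rmf\bsF'=\bsF$.

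There is essentially no obstacle here: the statement is an immediate repackaging of \hyperref[lem:Inversion]{Lemma~\ref*{lem:Inversion}} into matrix form, and the only thing to be careful about is bookkeeping — confirming that the rows indexed $1$ through $4$ are genuinely shared (which is part of the hypothesis that $\scrF$ appears as the first four spheres of both configurations) and that the arithmetic of the last row of $\bsR_\rmf\bsF$ matches the left-hand side of \hyperref[eqn:Inversion]{(\ref*{eqn:Inversion})}. The mild subtlety worth a sentence is the involutivity: one should note either that $\bsR_\rmf^2=\bsI$ or that the adjacency setup is symmetric, so that the single relation $\bsR_\rmf\bsF=\bsF'$ automatically provides its converse.
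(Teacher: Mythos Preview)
Your proposal is correct and follows essentially the same approach as the paper's proof: both observe that $\bsR_\rmf$ fixes the first four rows and that \hyperref[lem:Inversion]{Lemma~\ref*{lem:Inversion}} identifies the transformed fifth row $\bsv_1+\bsv_2+\bsv_3+\bsv_4-\bsv_\mu$ with $\bsv'_\mu$. Your explicit remark on involutivity (via $\bsR_\rmf^2=\bsI$ or symmetry) is a small elaboration of what the paper states in a single clause.
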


\begin{proof}
Left multiplication by $\bsR_\rmf$ interchanges the antipodal vector $\bsv_\mu$ of $\scrV$ and $\bsv'_\mu$ of $\scrV'$ by \hyperref[lem:Inversion]{Lemma~\ref*{lem:Inversion}}, while fixing the first four common rows $\bsv_1,\bsv_2,\bsv_3,\bsv_4$ representing the spheres in $\scrF$.
\end{proof}

\subsection{Apollonian Group}

We now define the orthoplicial analogue of the $4 \times 4$ matrix group introduced in \cite[\S4]{GM} as the octahedral analogue of the classical \emph{tetrahedral} Apollonian group from \cite{Hirst}. Given an orthplicial Platonic configuration, there are 16 quadruples $\scrF_{ijk\ell}=\{S_i,S_j,S_k,S_\ell\}$ of pairwise tangent spheres, where $i\equiv1$, $j\equiv2$, $k\equiv3$, $\ell\equiv4$ modulo 4. These 16 quadruples define 16 inversions that yield 16 adjacent Platonic configurations. Each inversion about the sphere $S_{ijk\ell}=S(\scrF_{ijk\ell})$ dual to the quadruple $\scrF_{ijk\ell}$ is given by a conjugate of $\bsR_\rmf=\bsS_{1234}$ by a suitable element of the Platonic group $\Platonic$. We consider the group generated by these matrices.

\begin{defn}
The \emph{orthoplicial Apollonian group} $\Apollonian$ is defined to be the $5 \times 5$ matrix group generated by $\calS:=\{\bsS_{ijk\ell}\}$, consisting of the following 16 matrices:
\[
\bsS_{1234}=\medpmatrix{
1&0&0&0&0\\
0&1&0&0&0\\
0&0&1&0&0\\
0&0&0&1&0\\
1&1&1&1&-1},
\]\[
\bsS_{5234}=\medpmatrix{
-1&2&2&2&0\\
0&1&0&0&0\\
0&0&1&0&0\\
0&0&0&1&0\\
-1&1&1&1&1},
\quad
\bsS_{1634}=\medpmatrix{
1&0&0&0&0\\
2&-1&2&2&0\\
0&0&1&0&0\\
0&0&0&1&0\\
1&-1&1&1&1},
\]\[
\bsS_{1274}=\medpmatrix{
1&0&0&0&0\\
0&1&0&0&0\\
2&2&-1&2&0\\
0&0&0&1&0\\
1&1&-1&1&1},
\quad
\bsS_{1238}=\medpmatrix{
1&0&0&0&0\\
0&1&0&0&0\\
0&0&1&0&0\\
2&2&2&-1&0\\
1&1&1&-1&1},
\]\[
\bsS_{5634}=\medpmatrix{
-1&-2&2&2&4\\
-2&-1&2&2&4\\
0&0&1&0&0\\
0&0&0&1&0\\
-1&-1&1&1&3},
\quad
\bsS_{5274}=\medpmatrix{
-1&2&-2&2&4\\
0&1&0&0&0\\
-2&2&-1&2&4\\
0&0&0&1&0\\
-1&1&-1&1&3},
\quad
\bsS_{5238}=\medpmatrix{
-1&2&2&-2&4\\
0&1&0&0&0\\
0&0&1&0&0\\
-2&2&2&-1&4\\
-1&1&1&-1&3},
\]\[
\bsS_{1674}=\medpmatrix{
1&0&0&0&0\\
2&-1&-2&2&4\\
2&-2&-1&2&4\\
0&0&0&1&0\\
1&-1&-1&1&3},
\quad
\bsS_{1638}=\medpmatrix{
1&0&0&0&0\\
2&-1&2&-2&4\\
0&0&1&0&0\\
2&-2&2&-1&4\\
1&-1&1&-1&3},
\quad
\bsS_{1278}=\medpmatrix{
1&0&0&0&0\\
0&1&0&0&0\\
2&2&-1&-2&4\\
2&2&-2&-1&4\\
1&1&-1&-1&3},
\]\[
\bsS_{5674}=\medpmatrix{
-1&-2&-2&2&8\\
-2&-1&-2&2&8\\
-2&-2&-1&2&8\\
0&0&0&1&0\\
-1&-1&-1&1&5},
\quad
\bsS_{5638}=\medpmatrix{
-1&-2&2&-2&8\\
-2&-1&2&-2&8\\
0&0&1&0&0\\
-2&-2&2&-1&8\\
-1&-1&1&-1&5},
\]\[
\bsS_{5278}=\medpmatrix{
-1&2&-2&-2&8\\
0&1&0&0&0\\
-2&2&-1&-2&8\\
-2&2&-2&-1&8\\
-1&1&-1&-1&5},
\quad
\bsS_{1678}=\medpmatrix{
1&0&0&0&0\\
2&-1&-2&-2&8\\
2&-2&-1&-2&8\\
2&-2&-2&-1&8\\
1&-1&-1&-1&5},
\]\[
\bsS_{5678}=\medpmatrix{
-1&-2&-2&-2&12\\
-2&-1&-2&-2&12\\
-2&-2&-1&-2&12\\
-2&-2&-2&-1&12\\
-1&-1&-1&-1&7}.
\]
\end{defn}

We have $\bsS_{ijk\ell}^2=\bsI$ for all of the generators above, since they are conjugates of $\bsR_\rmf=\bsS_{1234}$. We also have relations of the form
\begin{align*}
(\bsS_{ijk\ell}\bsS_{i'\!jk\ell})^2=
(\bsS_{ijk\ell}\bsS_{ij'\!k\ell})^2=
(\bsS_{ijk\ell}\bsS_{ijk'\!\ell})^2=
(\bsS_{ijk\ell}\bsS_{ijk\ell'\!})^2=I.
\end{align*}
Namely, for any pair of generators sharing three out of four labels, their product have order 2; there are 32 such pairs, and we can check these relations directly. The 16 generators and $16+32=48$ relations above appear to give a complete presentation for this group abstractly, but we will not verify this fact here.

\begin{lem}
The orthoplicial Apollonian group $\Apollonian$ is a subgroup of $\OF(\bbZ)$; namely, for any matrix $\bsA \in \Apollonian$, we have $\det \bsA=\pm1$ and
\[
\bsA^\trans \bsQF \bsA=\bsQF.
\]
\end{lem}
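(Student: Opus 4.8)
The plan is to reduce the general statement to a finite check on the generators, just as was done for the Platonic group $\Platonic$. First I would observe that it suffices to prove the two assertions, $\det\bsA=\pm1$ and $\bsA^\trans\bsQF\bsA=\bsQF$, for each of the $16$ generators $\bsS_{ijk\ell}\in\calS$: both conditions define the group $\OF(\bbR)$ (together with the condition of being integral, which cuts out $\OF(\bbZ)$), and $\OF(\bbZ)$ is closed under products and inverses. Since $\Apollonian$ is by definition generated by $\calS$, once every generator lies in $\OF(\bbZ)$ the whole group does. The entries of every $\bsS_{ijk\ell}$ are manifestly integers, so integrality is immediate and only the orthogonality relation and the determinant need verifying.

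For the determinant, I would exploit the fact, already noted in the excerpt, that each $\bsS_{ijk\ell}$ is a conjugate of $\bsR_\rmf=\bsS_{1234}$ by an element of the Platonic group $\Platonic$; since conjugation preserves the determinant, it suffices to compute $\det\bsR_\rmf$. Expanding along the obvious block structure (the top-left $4\times4$ block is the identity, and the last row is $(1,1,1,1,-1)$), one gets $\det\bsR_\rmf=-1$, hence $\det\bsS_{ijk\ell}=-1$ for every generator and so $\det\bsA=\pm1$ for all $\bsA\in\Apollonian$.

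For the quadratic-form relation, the cleanest route is again via conjugacy: if $\bsP\in\Platonic$ satisfies $\bsP^\trans\bsQF\bsP=\bsQF$ — which was established in the lemma on $\Platonic\le\OF(\bbZ)$ — and if $\bsR_\rmf^\trans\bsQF\bsR_\rmf=\bsQF$, then for $\bsS_{ijk\ell}=\bsP^{-1}\bsR_\rmf\bsP$ we get $\bsS_{ijk\ell}^\trans\bsQF\bsS_{ijk\ell}=\bsP^\trans\bsR_\rmf^\trans(\bsP^{-\trans}\bsQF\bsP^{-1})\bsR_\rmf\bsP=\bsP^\trans\bsR_\rmf^\trans\bsQF\bsR_\rmf\bsP=\bsP^\trans\bsQF\bsP=\bsQF$, using $\bsP^{-\trans}\bsQF\bsP^{-1}=\bsQF$ (equivalent to the Platonic relation). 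Thus everything comes down to the single identity $\bsR_\rmf^\trans\bsQF\bsR_\rmf=\bsQF$, which is a routine $5\times5$ matrix multiplication; alternatively, since $\bsR_\rmf$ is exactly the matrix $\bsR_\rmf$ from the corollary realizing the inversion interchanging adjacent configurations, one can instead verify the relation directly for all $16$ generators by direct computation, as the surrounding text already does for the order-$2$ and length-$2$ relations.

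The main obstacle is essentially bookkeeping rather than conceptual: one must either (a) produce, for each $\bsS_{ijk\ell}$, an explicit $\bsP\in\Platonic$ with $\bsS_{ijk\ell}=\bsP^{-1}\bsR_\rmf\bsP$ and invoke the conjugation argument, or (b) carry out the $16$ matrix verifications $\bsS_{ijk\ell}^\trans\bsQF\bsS_{ijk\ell}=\bsQF$ by hand. Route (a) is more elegant and shorter to write but requires identifying the conjugating permutations; route (b) is longer but entirely mechanical. In either case there is no genuine difficulty, and I would present route (a), reducing to the single computation $\det\bsR_\rmf=-1$ and $\bsR_\rmf^\trans\bsQF\bsR_\rmf=\bsQF$, then citing the already-noted conjugacy of the generators to $\bsR_\rmf$ together with the inclusion $\Platonic\le\OF(\bbZ)$.
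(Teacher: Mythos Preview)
Your proposal is correct. The paper's own proof is the brute-force route you label (b): it simply asserts that for each generator $\bsS=\bsS_{ijk\ell}\in\calS$ one checks $\det\bsS=-1$ and $\bsS^\trans\bsQF\bsS=\bsQF$ by direct computation, and that is the entire argument. Your route (a), reducing via the conjugacy $\bsS_{ijk\ell}=\bsP^{-1}\bsR_\rmf\bsP$ with $\bsP\in\Platonic$ to the single verification $\det\bsR_\rmf=-1$ and $\bsR_\rmf^\trans\bsQF\bsR_\rmf=\bsQF$, is a valid and slightly more economical variant; the conjugacy fact you invoke is indeed stated in the paper immediately before the definition of $\Apollonian$, and the inclusion $\Platonic\le\OF(\bbZ)$ is the earlier lemma, so nothing extra is needed.
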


\begin{proof}
For each generator $\bsS=\bsS_{ijk\ell} \in \calS$, $\det \bsS=-1$ and $\bsS^\trans \bsQF \bsS=\bsQF$ can be checked by direct computation.
\end{proof}

\begin{defn}
The \emph{oriented orthoplicial Apollonian group} $\Apollonian^+<\Apollonian$ is the subgroup consisting of matrices with determinant $+1$, i.e.\;$\Apollonian^+:=\Apollonian \cap \SOF(\bbZ)$.
\end{defn}

The oriented Apollonian group $\Apollonian^+$ is the index 2 kernel of the determinant on the Apollonian group $\Apollonian$. Since every generator $\bsS_{ijk\ell} \in \calS$ has determinant $-1$, it follows that $\Apollonian^+$ consists of elements that can be, and can only be, written as even-length words in the generators $\calS=\{\bsS_{ijk\ell}\}$ of $\Apollonian$; hence, $\Apollonian^+$ is generated by $\{\bsS_{ijk\ell}\bsS_{i'\!j'\!k'\!\ell'\!} \mid \bsS_{ijk\ell}, \bsS_{i'\!j'\!k'\!\ell'\!} \in \calS\}$, which can easily be reduced to
\begin{align} \label{eqn:S+}
\calS^+:=\{\bsS_{1234}\bsS_{ijk\ell} \mid \bsS_{1234} \neq \bsS_{ijk\ell} \in \calS\}.
\end{align}
using the relations $\bsS_{ijk\ell}^2=\bsI$ for all $\bsS_{ijk\ell} \in \calS$.

For our purposes, the most important features of the Apollonian group $\Apollonian$ and the oriented Apollonian group $\Apollonian^+$ are the actions on $F$-matrices, summarized below.

\begin{lem} \label{lem:AonF}
If $\scrP$ is an orthoplicial Apollonian packing containing orthoplicial Platonic configurations $\scrV, \scrV'$, with their $F$-matrices $\bsF,\bsF'$, then, $\bsF' \in \Apollonian \bsF$.
\end{lem}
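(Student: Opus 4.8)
The plan is to induct on the combinatorial distance between the configurations $\scrV$ and $\scrV'$ in the graph whose vertices are the orthoplicial Platonic configurations occurring in $\scrP$ and whose edges join configurations that are adjacent along a common quadruple of pairwise tangent spheres. By the definition of an orthoplicial Apollonian packing, the configurations appearing in $\scrP$ are obtained from any one of them by successively applying the 16 inversions dual to the quadruples of pairwise tangent spheres; hence this adjacency graph is connected, and any two configurations $\scrV,\scrV'$ in $\scrP$ are joined by a finite path
\[
\scrV=\scrV^{(0)}, \scrV^{(1)}, \ldots, \scrV^{(m)}=\scrV',
\]
where consecutive configurations $\scrV^{(t-1)}, \scrV^{(t)}$ are adjacent along some quadruple $\scrF^{(t)}$ of pairwise tangent spheres.

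The key step is the single-edge case: if $\scrV^{(t-1)}$ and $\scrV^{(t)}$ are adjacent along a quadruple $\scrF^{(t)}$ of pairwise tangent spheres, then their $F$-matrices satisfy $\bsF^{(t)} \in \Apollonian \bsF^{(t-1)}$. To see this, first choose an admissible ordering on $\scrV^{(t-1)}$ in which the four spheres of $\scrF^{(t)}$ occupy positions $S_i,S_j,S_k,S_\ell$ with $i\equiv1$, $j\equiv2$, $k\equiv3$, $\ell\equiv4$ modulo $4$; such an ordering exists because the 16 quadruples of pairwise tangent spheres correspond precisely to the 16 facets of the 4-orthoplex. The inversion about the dual sphere $S(\scrF^{(t)})$ then carries $\scrV^{(t-1)}$ to the configuration $\scrV^{(t)}$ adjacent along $\scrF^{(t)}$, by \hyperref[lem:Adjacent]{Lemma~\ref*{lem:Adjacent}}, and by construction this inversion is realized on $F$-matrices by left multiplication by the generator $\bsS_{ijk\ell}\in\calS$ (for $\scrF^{(t)}=\scrF_{1234}$ this is the content of the corollary to \hyperref[lem:Inversion]{Lemma~\ref*{lem:Inversion}} identifying $\bsR_\rmf=\bsS_{1234}$; the other 15 generators are the conjugates of $\bsR_\rmf$ by elements of the Platonic group $\Platonic$ that permute the admissible ordering so as to move $\scrF_{1234}$ to $\scrF_{ijk\ell}$, as noted just before the definition of $\Apollonian$). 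Thus $\bsF^{(t)}=\bsS_{ijk\ell}\,\bsF^{(t-1)}\in\Apollonian\,\bsF^{(t-1)}$. Since $\Platonic\subset\OF(\bbZ)$ and $\bsS_{ijk\ell}\in\calS\subset\Apollonian$, changing the admissible ordering we used to set things up replaces $\bsF^{(t-1)}$ by $\bsP\bsF^{(t-1)}$ for some $\bsP\in\Platonic$ and does not affect the coset $\Apollonian\bsF^{(t-1)}$; more precisely, the statement $\bsF'\in\Apollonian\bsF$ is about fixed $F$-matrices $\bsF,\bsF'$, and once these are fixed the composite $\bsF'(\bsF)^{-1}$ is a single matrix which, by the above, is a product of generators in $\calS$.

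Composing along the path, $\bsF'=\bsF^{(m)}=\bsS^{(m)}\cdots\bsS^{(1)}\,\bsF^{(0)}=\bsS^{(m)}\cdots\bsS^{(1)}\,\bsF$ with each $\bsS^{(t)}\in\calS$, so $\bsF'\in\Apollonian\bsF$. The main obstacle I anticipate is the bookkeeping in the single-edge case: one must be careful that ``adjacent along $\scrF$'' combined with a choice of admissible ordering does pin down the left-multiplier uniquely as one of the 16 generators, rather than merely up to a Platonic symmetry, and that the antipodal-vector identity \hyperref[eqn:Inversion]{(\ref*{eqn:Inversion})} is exactly what forces the fifth row to transform by the stated affine rule while the first four rows (representing $\scrF$) are fixed. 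This is precisely where \hyperref[lem:Inversion]{Lemma~\ref*{lem:Inversion}} and its corollary are used, together with the remark that each $\bsS_{ijk\ell}$ is the conjugate of $\bsR_\rmf$ by the Platonic-group element realizing the relabeling that moves $\scrF_{1234}$ to $\scrF_{ijk\ell}$; the verification that these conjugates are exactly the 16 matrices listed in the definition of $\Apollonian$ is the one routine computation the argument rests on.
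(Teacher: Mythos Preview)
Your approach is essentially identical to the paper's: both arguments note that each inversion in the construction of $\scrP$ corresponds to left multiplication by one of the sixteen generators $\bsS_{ijk\ell}$ of $\Apollonian$, and then compose along a chain of adjacent configurations. One caveat: your aside claiming that a Platonic re-ordering ``does not affect the coset $\Apollonian\bsF^{(t-1)}$'' is unjustified (the paper never asserts $\Platonic\subset\Apollonian$) and is in any case unnecessary---with the fixed ordering carried by $\bsF^{(t-1)}$, the quadruple $\scrF^{(t)}$ already sits in positions $i,j,k,\ell$, and the paper (like your main line of argument) simply propagates the \emph{induced} admissible ordering at each step.
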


\begin{proof}
At each step in the construction of an orthoplicial Apollonian packing $\scrP$, the $F$-matrix of a Platonic configuration is linearly transformed to the $F$-matrix of the adjacent Platonic configuration by the generators $\bsS_{ijk\ell}$ of $\Apollonian$ corresponding to the sphere inversions. Hence, it follows that the orbit $\Apollonian \bsF$ of the $F$-matrix $\bsF$ of the initial Platonic configuration consists of $F$-matrices $\bsF'$ of \emph{all} Platonic configurations in $\scrP$ with respect to the induced admissible ordering.
\end{proof}

It follows that, if $S$ is a sphere in an Apollonian packing $\scrP$ generated from the initial configuration $\scrV$ with its $F$-matrix $\bsF$, there exists a Platonic configuration $\scrV'$ with its $F$-matrix $\bsF' \in \Apollonian \bsF$ such that the inversive coordinate vector $\bsv(S)$ of the given sphere $S$ is captured by $\bsF'$, explicitly as one of the row vectors $\bsv'_k$ or implicitly as one of the complimentary vectors $2\bsv'_\mu-\bsv'_k$, $k=1,2,3,4$. An important observation here is that it suffices to consider the orbit of $\bsF$ under \emph{oriented} Apollonian group $\Apollonian^+$ to capture the vector $\bsv(S)$.

\begin{lem} \label{lem:A+onF}
If $\scrP$ is an orthoplicial Apollonian packing containing an orthoplicial Platonic configuration $\scrV$ with the $F$-matrix $\bsF$ and $S$ is a sphere in $\scrP$, then there exists an $F$-matrix $\bsF' \in \Apollonian^+ \bsF$ such that the inversive coordinate vector $\bsv(S)$ of the given spehre $S$ is captured by $\bsF'$, explicitly as one of the row vectors $\bsv'_k$ or implicitly as one of the complimentary vectors $2\bsv'_\mu-\bsv'_k$, $k=1,2,3,4$.
\end{lem}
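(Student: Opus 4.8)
The plan is to bootstrap from Lemma~\ref{lem:AonF} and then correct a parity (determinant) issue with at most one extra inversion. By Lemma~\ref{lem:AonF}, I can find an orthoplicial Platonic configuration $\scrV''\subseteq\scrP$ containing $S$ whose $F$-matrix $\bsF''$ lies in $\Apollonian\bsF$ and captures $\bsv(S)$ explicitly or implicitly; write $\bsF''=\bsA\bsF$ with $\bsA\in\Apollonian$. If $\det\bsA=+1$ then $\bsA\in\Apollonian^+$ and $\bsF'=\bsF''$ already does the job, so the only case that needs work is $\det\bsA=-1$.

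In that case I would perform one more inversion, chosen so that $S$ survives. Since $S$ is one of the eight constituent spheres of $\scrV''$, say $S=S''_m$ in the admissible ordering induced by $\bsF''$, and since every vertex of the $4$-orthoplex lies on at least one of its facets, there is a quadruple $\scrF=\scrF_{ijk\ell}\subseteq\scrV''$ of pairwise tangent spheres with $m\in\{i,j,k,\ell\}$, hence $S\in\scrF$. Let $\scrV'''\subseteq\scrP$ be the configuration adjacent to $\scrV''$ along $\scrF$, obtained by inverting $\scrV''$ about the dual sphere $S(\scrF)$; as recalled just before the definition of $\Apollonian$ and as used in the proof of Lemma~\ref{lem:AonF}, this inversion acts on $F$-matrices by left multiplication by the generator $\bsS_{ijk\ell}\in\calS$, so $\scrV'''$ has $F$-matrix $\bsF''':=\bsS_{ijk\ell}\bsF''=\bsS_{ijk\ell}\bsA\,\bsF$ for the induced admissible ordering.

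Then $\bsS_{ijk\ell}\bsA\in\Apollonian$ with $\det(\bsS_{ijk\ell}\bsA)=(-1)(-1)=+1$, so $\bsS_{ijk\ell}\bsA\in\Apollonian^+$ and $\bsF'''\in\Apollonian^+\bsF$. Moreover $S\in\scrF\subseteq\scrV'''$, so $S$ is one of the eight spheres captured by $\bsF'''$: writing the rows of $\bsF'''$ as $\bsv'''_1,\dots,\bsv'''_4,\bsv'''_\mu$ and using $\bsv'''_{k+4}=2\bsv'''_\mu-\bsv'''_k$ from Corollary~\ref{coro:Antipodal}, the vector $\bsv(S)$ appears either explicitly as some row $\bsv'''_k$ with $1\le k\le4$ or implicitly as some complimentary vector $2\bsv'''_\mu-\bsv'''_k$. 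Taking $\bsF'=\bsF'''$ then finishes the proof.

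The argument is short and I do not expect a real obstacle; the two points that need care are the determinant bookkeeping (every generator of $\Apollonian$, in particular every $\bsS_{ijk\ell}$, has determinant $-1$, so a single well-chosen inversion flips the parity of the connecting word) and the elementary combinatorial fact that each sphere of an orthoplicial configuration lies in at least one pairwise-tangent quadruple, which is exactly what lets me choose the parity-fixing inversion without losing $S$.
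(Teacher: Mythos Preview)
Your proposal is correct and follows essentially the same approach as the paper: find a configuration containing $S$ in the $\Apollonian$-orbit of $\scrV$, and if the connecting element has determinant $-1$, apply one further inversion along a pairwise-tangent quadruple containing $S$ to fix the parity while retaining $S$. The paper phrases the parity issue in terms of the word length (odd versus even number of inversions) rather than the determinant, but this is the same argument.
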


\begin{proof}
Let $\scrV'' \subset \scrP$ be a Platonic configuration containing the given sphere $S \in \scrP$. If $\scrV''$ is in the $\Apollonian^+$-orbit of $\scrV$, we set $\scrV':=\scrV''$. If $\scrV''$ is \emph{not} in the $\Apollonian^+$-orbit of $\scrV$, take a configuration $\scrV' \ni S$ adjacent to $\scrV''$ along a pairwise tangent quadruple $\scrF$ containing $S$. Note that it takes an odd number of inversions to map $\scrV$ onto $\scrV''$, and hence post-composing these inversions with one more inversion along $\scrF$ maps $\scrV$ onto $\scrV' \ni S$ with even number of inversions; hence $\scrV'$ is in the $\Apollonian^+$-orbit of $\scrV$. In any case, it now follows that, for any given sphere $S$, we can find a Platonic configuration $\scrV' \ni S$ in the $\Apollonian^+$-orbit of $\scrV$. Writing $\bsF'$ for the $F$-matrix of $\scrV'$, we have $\bsF' \in \Apollonian^+ \bsF$ and the inversive coordinate vector $\bsv(S)$ of the given spehre $S \in \scrV'$ is captured by $\bsF'$, explicitly as one of the row vectors $\bsv'_k$ or implicitly as one of the complimentary vectors $2\bsv'_\mu-\bsv'_k$, $k=1,2,3,4$.
\end{proof}

\subsection{Integral Packings} \label{ssec:IntegralPackings}

An orthoplicial Apollonian packing $\scrP$ is said to be \emph{integral} if the bends of all constituent spheres are integers; it must be planar (slab type) or bounded (ball type) since the unoriented radius of any non-planar sphere $S \in \scrP$ is bounded above by 1. We write $\scrB(\scrP)$ for the set $\{b(S) \mid S \in \scrP\} \subset \bbZ$ of integral bends, and write $\scrB^+(\scrP):=\scrB(\scrP) \cap \bbN \subset \bbN$. An integral Apollonian packing $\scrP$ is said to be \emph{primitive} if $\gcd \scrB(\scrP)=1$. Rescaling any integral packing $\scrP$ by the factor $\gcd \scrB(\scrP)$ always yields a primitive packing; hence, questions on the bends in integral packings reduces to questions on the bends in primitive packings.

\begin{lem} \label{lem:IntegralApollonian}
Let $\scrP$ be an orthoplicial Apollonian packing containing an orthoplicial Platonic configuration $\scrV$ with its bend vector $\bsb=\bsb(\scrV)=(b_1, b_2, b_3, b_4, b_\mu)$. Then, $\scrP$ is integral if and only if $\bsb$ is integral; moreover, $\scrP$ is primitive if and only if $\bsb$ is primitive.
\end{lem}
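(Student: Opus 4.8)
The plan is to reduce both equivalences to Proposition~\ref{prop:IntegralPlatonic} by exploiting the fact that, under the Apollonian group action, bend vectors transform linearly via \emph{integer} matrices. First I would record the basic mechanism: by Lemma~\ref{lem:AonF}, every orthoplicial Platonic configuration $\scrV'$ occurring in $\scrP$ has $F$-matrix $\bsF' = \bsA\bsF$ for some $\bsA \in \Apollonian$, and since $\Apollonian \subseteq \OF(\bbZ)$ consists of integer matrices, comparing second columns gives the relation $\bsb(\scrV') = \bsA\,\bsb(\scrV)$ between bend vectors. Combined with the remark following Lemma~\ref{lem:AonF}, every sphere $S \in \scrP$ lies in some such $\scrV'$, and its bend $b(S)$ is then one of $b'_1, b'_2, b'_3, b'_4$ or one of $2b'_\mu - b'_k$ ($k = 1,2,3,4$) --- in every case an integer linear combination of the entries of $\bsb(\scrV') = \bsA\,\bsb(\scrV)$.

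For the integrality equivalence: if $\bsb = \bsb(\scrV)$ is integral, then each $\bsb(\scrV') = \bsA\,\bsb$ is integral, so by Proposition~\ref{prop:IntegralPlatonic} every configuration $\scrV'$ in $\scrP$ is integral, whence all bends of $\scrP$ are integers; conversely, if $\scrP$ is integral then the sub-configuration $\scrV \subseteq \scrP$ is integral, so $\bsb$ is integral by Proposition~\ref{prop:IntegralPlatonic}. For the primitivity equivalence (now under the integrality hypothesis), I would argue by contradiction in both directions. If $\bsb$ is primitive but $\scrP$ is not, then $d := \gcd\scrB(\scrP) > 1$ divides every element of $\scrB(\scrV) \subseteq \scrB(\scrP)$, so $\scrV$ is not primitive, contradicting Proposition~\ref{prop:IntegralPlatonic}. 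If $\scrP$ is primitive but $\bsb$ is not, then $e := \gcd(b_1, b_2, b_3, b_4, b_\mu) > 1$ divides every entry of $\bsb(\scrV') = \bsA\,\bsb$ for every configuration $\scrV'$ in $\scrP$; in particular $e$ divides $b'_1, \dots, b'_4$ and $b'_\mu$, hence also $b'_5 = 2b'_\mu - b'_1, \dots, b'_8 = 2b'_\mu - b'_4$, so $e$ divides $b(S)$ for every sphere $S \in \scrP$, forcing $e \mid \gcd\scrB(\scrP) = 1$, a contradiction.

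I expect no serious obstacle: the genuinely delicate point --- the parity argument showing that $b_\mu$ is an integer (not merely a half-integer) and, in the primitive case, that $b_\mu$ is even --- has already been isolated and dispatched in Proposition~\ref{prop:IntegralPlatonic}, so the only new ingredients are the linearity of the $\Apollonian$-action on bend vectors and the integrality of the matrices in $\Apollonian \subseteq \OF(\bbZ)$. The one subtlety worth flagging explicitly in the write-up is that $b_\mu$ and its images $b'_\mu$ are \emph{not} literally bends of spheres of the packing, so the primitivity assertions cannot be read off from $\scrB(\scrP)$ directly and must be routed through Proposition~\ref{prop:IntegralPlatonic} together with the group action as above; beyond this, the proof is routine bookkeeping.
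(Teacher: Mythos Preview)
Your proposal is correct and follows essentially the same route as the paper: both arguments observe that $\Apollonian$ consists of integer matrices (hence preserves integrality and gcd of bend vectors under the left action coming from \hyperref[lem:AonF]{Lemma~\ref*{lem:AonF}}), and then invoke \hyperref[prop:IntegralPlatonic]{Proposition~\ref*{prop:IntegralPlatonic}} to pass between bend-vector primitivity/integrality and configuration primitivity/integrality. Your write-up is somewhat more explicit about the bookkeeping (and rightly flags that $b_\mu$ is not itself a bend), but there is no substantive difference in strategy.
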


\begin{proof}
Since the generators $\bsS_{ijk\ell}$ of the orthoplicial Apollonian group $\Apollonian$ are integer matrices, the entire group $\Apollonian$ consists only of integer matrices. It follows that, in an orthoplicial Apollonian packing $\scrP$, (i) the bend vectors of all Platonic configurations in $\scrP$ is integral if and only if the bend vector of one Platonic configuration in $\scrP$ is integral, and (ii) the bend vectors of all Platonic configurations in $\scrP$ is primitive if and only if the bend vector of one Platonic configuration in $\scrP$ is primitive. Hence, Proposition reduces to the following statements about Platonic configuraitons: (i$'$) the bend vector $\bsb=\bsb(\scrV)$ is integral if and only if $\scrV$ is integral, and (ii$'$) the bend vector $\bsb=\bsb(\scrV)$ is primitive if and only if $\scrV$ is primitive. These statements are already established as \hyperref[prop:IntegralPlatonic]{Proposition~\ref*{prop:IntegralPlatonic}}.
\end{proof}

\begin{figure}[h]
\begin{center}
\includegraphics[trim = 70px 91px 51px 62px, clip, height=55mm, width=55mm]{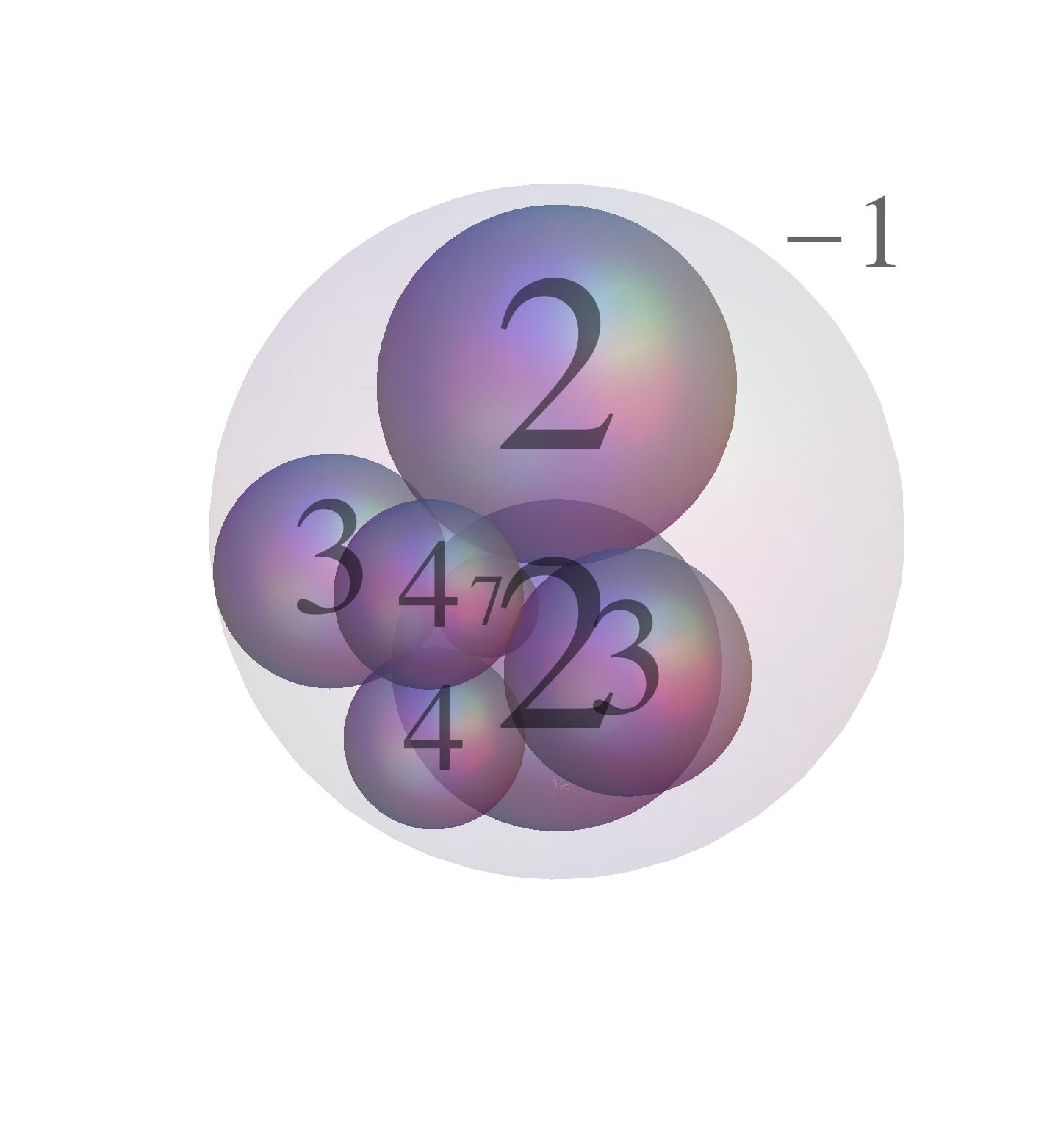}
\hspace{6mm}
\includegraphics[trim = 70px 91px 51px 62px, clip, height=55mm, width=55mm]{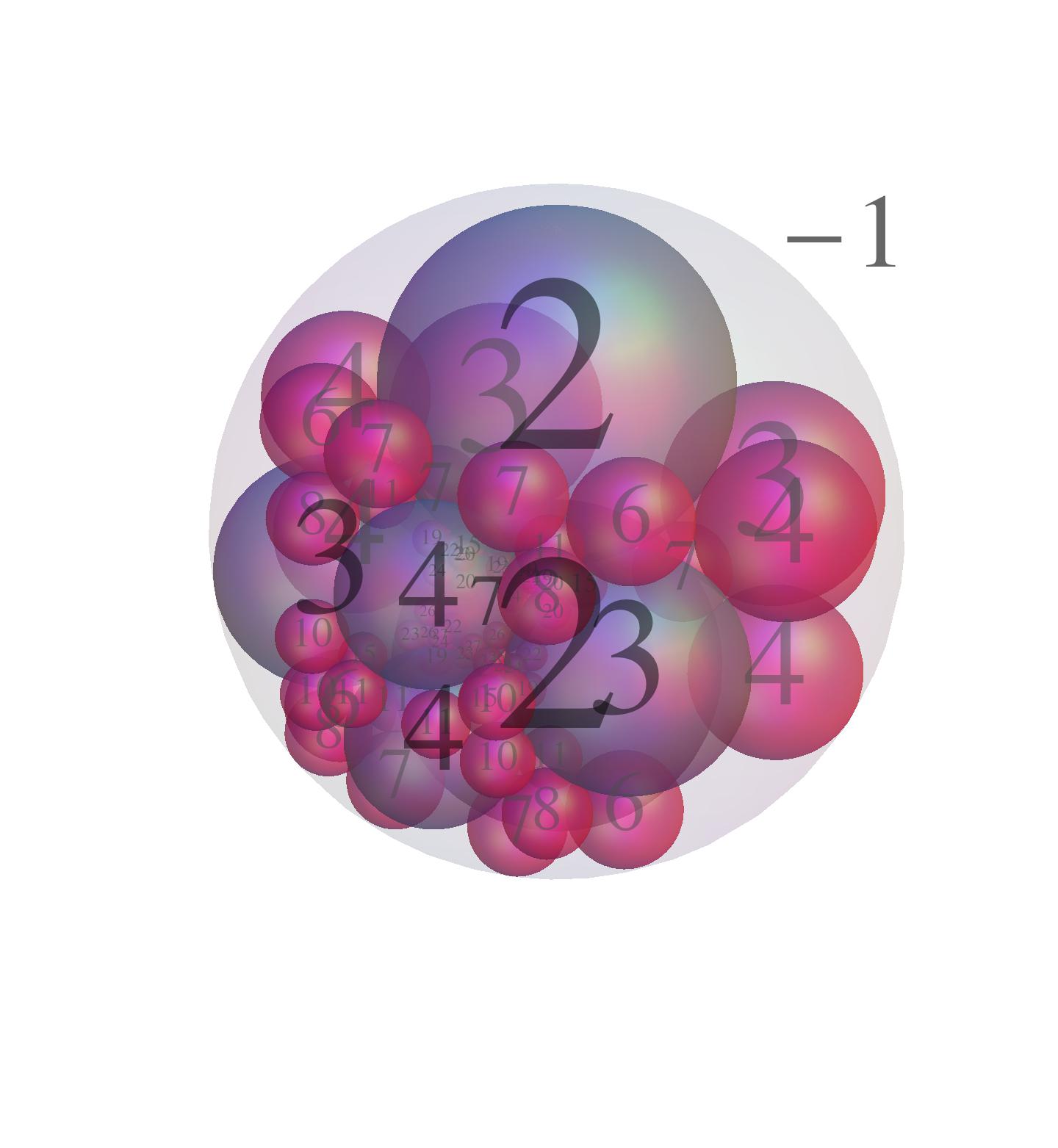}
\end{center}
\caption{The first step in the construction of the primitive Apollonian packing $\scrP_1$ from the Platonic configuration $\scrV_1$}
\label{fig:P1Step}
\end{figure}

We have already seen a few examples of primitive orthoplicial Platonic configurations, i.e.\;$\scrV_0$ in \hyperref[exa:V0]{Example~\ref*{exa:V0}}, $\scrV_1$ in \hyperref[exa:V1]{Example~\ref*{exa:V1}}, and $\scrV_{7\rmd}$ in \hyperref[exa:V7d]{Example~\ref*{exa:V7d}}. By \hyperref[lem:IntegralApollonian]{Lemma~\ref*{lem:IntegralApollonian}}, the Apollonian packings generated from these configurations are primitive. The primitive Apollonian packing $\scrP_1$ generated from the Platonic configuration $\scrV_1$ is shown in \hyperref[fig:P1]{Figure~\ref*{fig:P1}} and \hyperref[fig:P1Step]{Figure~\ref*{fig:P1Step}}. The primitive Apollonian packing $\scrP_{7\rmd}$ generated from the Platonic configuration $\scrV_{7\rmd}$ is shown in \hyperref[fig:IntegralSOASP]{Figure~\ref*{fig:IntegralSOASP}}.

\section{Sphere Stabilizer and Integral Bends} \label{sec:LG}

\subsection{Local Obstructions} \label{ssec:LO}

We are interested in understanding the set $\scrB(\scrP)$ of bends in primitive Apollonian packings $\scrP$. We start our investigation with computational observations based on a few primitive orthoplicial Apollonian packings. Let us continue to write $\scrP_0, \scrP_1, \scrP_{7\rmd}$ for primitive Apollonian packings generated from the configurations $\scrV_0,\scrV_1,\scrV_{7\rmd}$ respectively. Computation shows that the set of bends of spheres in $\scrP_0,\scrP_1,\scrP_{7\rmd}$ are as follows:
\begin{align*}
\scrB(\scrP_0)&=
\left\{\begin{matrix}
0, 1, 2, 4, 5, 6, 8, 9, 10, 12, 13, 14, 16, 17, 18, 20, 21, 22, 24,\\
25, 26, 28, 29, 30, 32, 33, 34, 36, 37, 38, 40, 41, 42, 44, 45, 46, \\
48, 49, 50, 52, 53, 54, 56, 57, 58, 60, 61, 62, 64, 65, 66, 68, \cdots
\end{matrix}\right\}\\
\scrB(\scrP_1)&=
\left\{\begin{matrix}
-1, 2, 3, 4, 6, 7, 8, 10, 11, 12, 14, 15, 16, 18, 19, 20, 22, 23, 24, \\
26, 27, 28, 30, 31, 32, 34, 35, 36, 38, 39, 40, 42, 43, 44, 46, 47, \\
48, 50, 51, 52, 54, 55, 56, 58, 59, 60, 62, 63, 64, 66, 67, 68, \cdots
\end{matrix}\right\}\\
\scrB(\scrP_{7\rmd})&=
\left\{\begin{matrix}
-7, 12, 17, 20, 22, 24, 25, 29, 30, 33, 34, 37, 38, 40, 41, 44, 46, \\
48, 49, 50, 52, 53, 54, 56, 58, 60, 61, 62, 64, 65, 66, 68, 69, \cdots, \\
200, 201, 202, 204, 205, 206, 208, 209, 210, 212, 213, 214, 216, \\
217, 218, 220, 221, 222, 224, 225, 226, 228, 229, 230, 232, 233, \\
234, 236, 237, 238, 240, 241, 242, 244, 245, 246, 248, 249, \cdots\;\;
\end{matrix}\right\}
\end{align*}
By inspecting these numbers, we observe that $\scrB(\scrP_0)$ seems to contain all positive integers $n \equiv 0,1,2 \pmod{4}$ but no integers $n \equiv 3 \pmod{4}$; similarly, we observe that $\scrB(\scrP_1)$ seems to contain all positive integers $n \equiv 0,2,3 \pmod{4}$ but no integers $n \equiv 1 \pmod{4}$. As for $\scrB(\scrP_{7\rmd})$, it appears that $\scrB(\scrP_{7\rmd})$ seems to contain all large enough integers $n \equiv 0,1,2 \pmod{4}$ but no integers $n \equiv 3 \pmod{4}$. For any other primitive orthoplicial Apollonian packing $\scrP$, the same phenomena can be observed computationally; these evidences suggest the following statements:
\begin{itemize}
\item[] \hspace{-9.3mm} For any primitive orthoplicial Apollonian packing $\scrP$,
\item[(a)] there is a local obstruction modulo 4,
\item[(b)] this obstruction modulo 4 is the only local obstruction, and
\item[(c)] every large enough integer avoiding the local obstruction appears in $\scrB(\scrP)$.
\end{itemize}
We verify the statement (a) in the following proposition; the remaining statements (b) and (c) will be treated in the subsequent subsections.

\begin{prop} \label{prop:LO}
For the set of bends $\scrB=\scrB(\scrP)$ of a primitive orthoplicial Apollonian packing $\scrP$, there is always a local obstruction modulo 4: there exists $\varepsilon=\varepsilon(\scrP) \in \{\pm 1\}$ such that, for every $b \in \scrB$,
\[
b \not \equiv -\varepsilon \mod{4}.
\]
Moreover, for any orthoplicial Platonic configuration $\scrV$ in $\scrP$, the ordered set of bends $b_k=b(S_k)$ of constituent spheres of $\scrV$ satisfy
\[
(b_1,b_2,b_3,b_4,b_5,b_6,b_7,b_8) \equiv (0,0,\varepsilon,\varepsilon,2,2,\varepsilon,\varepsilon) \mod{4}
\]
up to reordering by the permuting action of the orthoplicial Platonic group $\Platonic$.
\end{prop}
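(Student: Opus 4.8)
The plan is to reduce the statement to a congruence analysis of the bend vector of a \emph{single} configuration and then globalize. First I would note that, since the generators $\bsS_{ijk\ell}$ of $\Apollonian$ lie in $GL_5(\bbZ)$ and the bend vectors of all configurations in $\scrP$ form one $\Apollonian$-orbit (\hyperref[lem:AonF]{Lemma~\ref*{lem:AonF}}), all these bend vectors share the same gcd; by \hyperref[lem:IntegralApollonian]{Lemma~\ref*{lem:IntegralApollonian}} and \hyperref[prop:IntegralPlatonic]{Proposition~\ref*{prop:IntegralPlatonic}} that gcd is $1$, so \emph{every} orthoplicial Platonic configuration $\scrV\subset\scrP$ is integral and primitive with integral bend vector $\bsb=(b_1,b_2,b_3,b_4,b_\mu)^\trans$.

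The heart of the argument is to pin down $\bsb\bmod 4$ for such a configuration, using only $F(\bsb)=2b_\mu^2-2b_\mu s+q=0$ from \hyperref[thm:DGM]{Theorem~\ref*{thm:DGM}}, where $s=b_1+b_2+b_3+b_4$ and $q=b_1^2+b_2^2+b_3^2+b_4^2$. Since $q=2b_\mu(s-b_\mu)$ is even, $s$ is even, so the number of odd bends among $b_1,\dots,b_4$ is $0$, $2$, or $4$. I would rule out $0$ odd (primitivity then forces $b_\mu$ odd, whence $F(\bsb)\equiv 2\pmod 4$) and $4$ odd (then $q\equiv 4\pmod 8$ and a short check gives $F(\bsb)\not\equiv 0\pmod 8$ for either parity of $b_\mu$); hence exactly two $b_i$ are odd, and after permuting $b_1,\dots,b_4$ by an element of $\Platonic$ (which realizes all such permutations) I may assume $b_3,b_4$ odd and $b_1,b_2$ even, and then $F(\bsb)\equiv 0\pmod 4$ forces $b_\mu$ odd. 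The delicate point is that $b_3\equiv b_4\pmod 4$: since $b_\mu\in\bbZ$, the integer $t:=2b_\mu-s$ satisfies $t^2=s^2-2q$, and writing $s=2\sigma$, $t=2\tau$, $b_1=2b_1'$, $b_2=2b_2'$, $u=\tfrac12(b_3+b_4)$, $v=\tfrac12(b_3-b_4)$, $w=b_1'+b_2'$, substitution into $q=2\sigma^2-2\tau^2$ collapses to the identity $v^2+\tau^2+(b_1'-b_2')^2=2uw$; as $b_3=u+v$ is odd, $u$ and $v$ have opposite parity, and if $v$ were odd then $u$ is even, forcing $\tau^2+(b_1'-b_2')^2\equiv 3\pmod 4$, impossible for a sum of two squares, so $v$ is even, i.e. $b_3\equiv b_4\pmod 4$. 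Setting $\varepsilon:=b_3\bmod 4\in\{\pm1\}$, the relations $b_{k+4}=2b_\mu-b_k$ of \hyperref[coro:Antipodal]{Corollary~\ref*{coro:Antipodal}} give $b_7\equiv b_8\equiv\varepsilon$ (as $2b_\mu\equiv 2$ and $2-\varepsilon\equiv\varepsilon\pmod 4$) and $\{b_1,b_5\}\equiv\{b_2,b_6\}\equiv\{0,2\}\pmod 4$; applying the within-antipodal-pair transpositions in $\Platonic$ to $\{S_1,S_5\}$ and $\{S_2,S_6\}$ I may arrange $b_1\equiv b_2\equiv 0$ and $b_5\equiv b_6\equiv 2$, yielding $(b_1,\dots,b_8)\equiv(0,0,\varepsilon,\varepsilon,2,2,\varepsilon,\varepsilon)\pmod 4$ up to reordering by $\Platonic$.

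To finish I would show $\varepsilon$ is the same for every configuration of $\scrP$. Two adjacent configurations share a quadruple $\scrF$ of pairwise tangent spheres, i.e. a facet of the $4$-orthoplex, which contains exactly one sphere from each antipodal pair; by the pattern above, two of the four antipodal pairs carry residues $\{0,2\}$ and two carry $\{\varepsilon,\varepsilon\}$, so $\scrF$ always contains a sphere of odd bend, and that bend is $\equiv\varepsilon\pmod 4$ computed in either configuration, forcing the two to have the same $\varepsilon$. Since the configurations of $\scrP$ are linked by a chain of adjacencies, $\varepsilon=\varepsilon(\scrP)$ is well defined. Every bend of every sphere of $\scrP$ occurs among the eight bends of some configuration (\hyperref[lem:A+onF]{Lemma~\ref*{lem:A+onF}}), hence is $\equiv 0$, $2$, or $\varepsilon\pmod 4$; thus $b\not\equiv-\varepsilon\pmod 4$ for all $b\in\scrB(\scrP)$, and the displayed eight-term congruence is exactly what was proved configuration-by-configuration.

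I expect the main obstacle to be the congruence $b_3\equiv b_4\pmod 4$ in the second step: reducing $F(\bsb)=0$ modulo $4$ or even modulo $8$ does not separate the two odd bends, and one is forced to argue $2$-adically with the square root $t$ of $s^2-2q$, which in turn requires knowing $b_\mu\in\bbZ$ — available here only because the configuration is primitive (\hyperref[prop:IntegralPlatonic]{Proposition~\ref*{prop:IntegralPlatonic}}). The remaining steps are routine parity bookkeeping and a connectivity argument.
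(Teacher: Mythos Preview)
Your argument is correct and essentially complete. It differs from the paper's proof in two places, and in both the difference is to your advantage.

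For the single-configuration analysis, the paper proceeds by brute force: it enumerates all $3584$ solutions of $F(\bsb)\equiv 0$ over $\bbZeight$, throws out the non-primitive ones, and then reduces the surviving list by the action of $\Platonic$ until only the two classes $(0,0,\pm 1,\pm 1,2,2,\pm 1,\pm 1)\bmod 4$ remain. You instead argue directly. The parity counts ruling out $0$ or $4$ odd entries among $b_1,\dots,b_4$ are straightforward, and your key step --- deriving the identity $v^2+\tau^2+(b_1'-b_2')^2=2uw$ from $t^2=s^2-2q$ and using the sum-of-two-squares obstruction mod $4$ to force $v$ even --- is exactly the clean conceptual argument that the paper's exhaustive search hides. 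Your observation that this step genuinely needs the integrality of $b_\mu$ (hence \hyperref[prop:IntegralPlatonic]{Proposition~\ref*{prop:IntegralPlatonic}}) is well taken; it explains why the paper is forced up to $\bbZeight$ when working only with $(b_1,\dots,b_4)$.

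For the globalization, the paper verifies by direct computation that each generator $\bsS_{ijk\ell}\in\calS$ preserves the congruence class, i.e.\ $\varepsilon(\scrV)=\varepsilon(\scrV')$ for adjacent $\scrV,\scrV'$. Your argument --- that the shared facet $\scrF$ contains one sphere from each antipodal pair, hence necessarily a sphere of odd bend, and this odd bend determines $\varepsilon$ in both configurations --- is a transparent replacement for that computation. Both approaches yield the same conclusion; yours is shorter and explains \emph{why} the invariant survives adjacency rather than merely checking that it does.
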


\begin{proof}
We first prove the presence of an analogous local obstruction for the bends of spheres in a primitive Platonic configuration $\scrV$. Consider the cone
\[
F(\bsb)= 2b_\mu^2-2b_\mu(b_1+b_2+b_3+b_4)+(b_1^2+b_2^2+b_3^2+b_4^2)=0
\]
defined by one of the equations \hyperref[eqn:DGM-diag]{(\ref*{eqn:DGM-diag})} in \hyperref[thm:DGM]{Theorem~\ref*{thm:DGM}}. This equation is degenerate over $\bbZfour$, and solutions over $\bbZfour$ include ones that are not reductions modulo 4 of solutions over $\bbZ$. Solving the equation over $\bbZeight$ instead, we find 3584 solutions for $\bsb=(b_1,b_2,b_3,b_4,b_\mu)$; together with the equation \hyperref[eqn:Antipodal]{(\ref*{eqn:Antipodal})} from \hyperref[coro:Antipodal]{Corollary~\ref*{coro:Antipodal}}, we only have 1794 solutions for $(b_1,b_2,b_3,b_4,b_5,b_6,b_7,b_8)$, since there are two choices of $b_\mu$ for each $2b_\mu$ in $\bbZeight$. Since we are only interested in primitive solution, we can then remove all \emph{even} solutions, including the origin; this leaves 1536 solutions.

These solutions are highly redundant, since we have not utilized the action of the Platonic group $\Platonic$; we need to choose one representative from each orbit of solutions under the action of $\Platonic$. Note that $\Platonic$ is the full symmetry group of 4-orthoplex, isomorphic to the signed-permutation group on $8=4 \times 2$ points. Consider the ordering $0 \prec 1 \prec 2 \prec \cdots \prec 6 \prec 7$ for elements of $\bbZeight$. Note that, for $k=1,2,3,4$, interchanging $b_k$ and $b_{k+4}$ by a suitable conjugate of $\bsR_4 \in \Platonic$ yields another solution; removing solutions with $b_k \succ b_{k+4}$ for some $k=1,2,3,4$, we are left with 240 solutions such that $b_k \preccurlyeq b_{k+4}$ for all $k=1,2,3,4$. Also, for $k=1,2,3$, interchanging $b_k$ and $b_{k+1}$ as well as $b_{k+4}$ and $b_{k+5}$ by $\bsR_k \in \Platonic$ yields another solution; removing solutions with $b_k \succ b_{k+1}$ for some $k=1,2,3$, we are now left only with the following 24 solutions such that $b_k \preccurlyeq b_{k+4}$ for all $k=1,2,3,4$ and $b_1 \preccurlyeq b_2 \preccurlyeq b_3 \preccurlyeq b_4$:
{\small
\[
\begin{matrix}
(0, 0, 1, 1, 2, 2, 1, 1), & (0, 0, 1, 1, 6, 6, 5, 5), & (0, 0, 1, 5, 2, 2, 1, 5), & (0, 0, 3, 3, 2, 2, 7, 7), \\
(0, 0, 3, 3, 6, 6, 3, 3), & (0, 0, 3, 7, 6, 6, 3, 7), & (0, 0, 5, 5, 2, 2, 5, 5), & (0, 0, 7, 7, 6, 6, 7, 7), \\
(0, 1, 1, 2, 6, 5, 5, 4), & (0, 1, 1, 4, 2, 1, 1, 6), & (0, 1, 4, 5, 2, 1, 6, 5), & (0, 2, 3, 3, 6, 4, 3, 3), \\
(0, 2, 3, 7, 6, 4, 3, 7), & (0, 2, 7, 7, 6, 4, 7, 7), & (0, 3, 3, 4, 2, 7, 7, 6), & (0, 4, 5, 5, 2, 6, 5, 5), \\
(1, 1, 2, 2, 5, 5, 4, 4), & (1, 1, 4, 4, 1, 1, 6, 6), & (1, 4, 4, 5, 1, 6, 6, 5), & (2, 2, 3, 3, 4, 4, 3, 3), \\
(2, 2, 3, 7, 4, 4, 3, 7), & (2, 2, 7, 7, 4, 4, 7, 7), & (3, 3, 4, 4, 7, 7, 6, 6), & (4, 4, 5, 5, 6, 6, 5, 5).
\end{matrix}
\]
}
Finally, reducing these solutions mod 4 and removing the redundancy once again by the action of the signed-permutation group $\Platonic$ using the ordering $0 \prec 1 \prec 2 \prec 3$ for elements of $\bbZfour$, as we have done so with $\bbZeight$, we obtain just two solutions
\[
(0,0,1,1,2,2,1,1), \qquad (0,0,3,3,2,2,3,3).
\]
In other words, for a Platonic configuration $\scrV$, there exists $\varepsilon=\varepsilon(\scrV) \in \{\pm1\}$ such that the bends $b_k=b(S_k)$ of the constituent spheres $S_k \in \scrV$ satisfy
\begin{align} \label{eqn:LO}
\begin{matrix}
b_k \not \equiv -\varepsilon \mod{4} \quad \text{for all $k=1,\cdots,8$, and}\\
(b_1,b_2,b_3,b_4,b_5,b_6,b_7,b_8) \equiv (0,0,\varepsilon,\varepsilon,2,2,\varepsilon,\varepsilon) \mod{4},
\end{matrix}
\end{align}
up to reordering by the permuting action of the orthoplicial Platonic group $\Platonic$.

To complete the proof, we only need to show that the local obstruction \hyperref[eqn:LO]{(\ref*{eqn:LO})} for the bends of spheres in a Platonic configuration persists under the action of the Apollonian group $\Apollonian$, so that $\varepsilon(\scrP):=\varepsilon(\scrV)$ is well-defined. We can verify this by direct computation for each generator $\bsS_{ijk\ell} \in \calS$ of $\Apollonian$, representing the inversion along any quadruple of pairwise tangent spheres; namely, if $\scrV$ and $\scrV'$ are adjacent Platonic configurations in $\scrP$, then $\varepsilon(\scrV)=\varepsilon(\scrV')$.
\end{proof}

\subsection{Sphere Stabilizer}

We now study the set $\scrB(\scrP)$ of bends in primitive orthoplicial Apollonian packings $\scrP$ by looking at the orbit of the initial bend vector $\bsb$ under the action of $\Apollonian$ and $\Apollonian^+$. Let us first restate \hyperref[lem:AonF]{Lemma~\ref*{lem:AonF}} and \hyperref[lem:A+onF]{Lemma~\ref*{lem:A+onF}} in terms of bend vectors in the following corollaries.

\begin{coro} \label{lem:Aonb}
If $\scrP$ is an orthoplicial Apollonian packing containing orthoplicial Platonic configurations $\scrV, \scrV'$, with their bend vectors $\bsb,\bsb'$, then, $\bsb' \in \Apollonian \bsb$.
\end{coro}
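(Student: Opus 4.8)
The plan is to deduce this immediately from \hyperref[lem:AonF]{Lemma~\ref*{lem:AonF}} by passing from $F$-matrices to their second columns. First I would invoke \hyperref[lem:AonF]{Lemma~\ref*{lem:AonF}}, which (after fixing compatible admissible orderings on $\scrV$ and $\scrV'$, as provided by the construction of the packing) gives a matrix $\bsA \in \Apollonian$ with $\bsF' = \bsA \bsF$, where $\bsF = \bsF(\scrV)$ and $\bsF' = \bsF(\scrV')$.

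Next I would observe that extracting the bend vector is a linear operation: if $\bse_2 := (0,1,0,0,0)^\trans$ denotes the second standard basis column vector, then $\bsF \bse_2$ is precisely the second column of $\bsF$, i.e.\ the bend vector $\bsb = \bsb(\scrV) = (b_1,b_2,b_3,b_4,b_\mu)^\trans$, and likewise $\bsF' \bse_2 = \bsb' = \bsb(\scrV')$. Right-multiplying the identity $\bsF' = \bsA \bsF$ by $\bse_2$ then yields
\[
\bsb' = \bsF'\bse_2 = \bsA \bsF \bse_2 = \bsA \bsb,
\]
so $\bsb' \in \Apollonian \bsb$, as claimed.

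There is essentially no obstacle here; the only point requiring care is that the admissible ordering on $\scrV'$ be the one induced from $\scrV$ along the chain of inversions connecting them in $\scrP$, but this compatibility is exactly what is built into the statement of \hyperref[lem:AonF]{Lemma~\ref*{lem:AonF}}, so nothing further is needed.
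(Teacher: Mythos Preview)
Your proposal is correct and matches the paper's approach exactly: the paper presents this corollary as a direct restatement of \hyperref[lem:AonF]{Lemma~\ref*{lem:AonF}} in terms of bend vectors, without writing out a separate proof, and your argument of extracting the second column via right-multiplication by $\bse_2$ is precisely the formalization of that restatement.
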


\begin{coro} \label{lem:A+onb}
If $\scrP$ is an orthoplicial Apollonian packing containing an orthoplicial Platonic configuration $\scrV$ with the bend vector $\bsb$ and $S$ is a sphere in $\scrP$, then there exists a bend vector $\bsb' \in \Apollonian^+ \bsb$ such that the bend $b(S)$ of the given sphere $S$ is captured by $\bsb'$, explicitly as one of the bend components $b'_k$ or implicitly as one of the complimentary bends $2b'_\mu-b'_k$, $k=1,2,3,4$.
\end{coro}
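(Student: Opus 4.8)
The plan is to deduce this corollary as the ``second column shadow'' of Lemma~\ref{lem:A+onF}, which has already been proved for $F$-matrices. First I would apply Lemma~\ref{lem:A+onF} directly: writing $\bsF=\bsF(\scrV)$, it produces an $F$-matrix $\bsF'\in\Apollonian^+\bsF$, say $\bsF'=\bsA\bsF$ with $\bsA\in\Apollonian^+$, such that the inversive coordinate vector $\bsv(S)$ of the given sphere $S$ is captured by $\bsF'$ — i.e.\ $\bsv(S)$ equals one of the row vectors $\bsv'_k$ ($k=1,2,3,4$) or one of the complimentary vectors $2\bsv'_\mu-\bsv'_k$ associated to $\bsF'$.

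The second, and essentially only, step is the bookkeeping that passes from coordinate vectors to bends. Let $\bm{e}_2=(0,1,0,0,0)^\trans$ be the second standard basis column. By definition of the bend vector, $\bsF\bm{e}_2=\bsb=\bsb(\scrV)$, and more generally the bend vector of the configuration $\scrV'$ whose $F$-matrix is $\bsF'$ is $\bsb':=\bsF'\bm{e}_2$. Since extracting a column commutes with left multiplication, $\bsb'=(\bsA\bsF)\bm{e}_2=\bsA(\bsF\bm{e}_2)=\bsA\bsb\in\Apollonian^+\bsb$, which is the orbit assertion. Moreover the bend of any sphere is precisely the second coordinate of its inversive coordinate vector (and, by $\bsv_\mu=\tfrac12(\bsv_1+\bsv_5)$, the second coordinate of $2\bsv'_\mu-\bsv'_k$ is $2b'_\mu-b'_k$); hence the identity $\bsv(S)=\bsv'_k$ forces $b(S)=b'_k$, and $\bsv(S)=2\bsv'_\mu-\bsv'_k$ forces $b(S)=2b'_\mu-b'_k$, where $b'_k$ and $b'_\mu$ are the entries of $\bsb'$. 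This is exactly the claimed statement.

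I do not expect any genuine obstacle here: the content is entirely contained in Lemma~\ref{lem:A+onF}, and the present corollary only records the consequence obtained by applying the linear functional ``second coordinate'' (equivalently, right multiplication by $\bm{e}_2$) to everything in sight. If one prefers an argument that does not explicitly invoke $F$-matrices, one can mirror the proof of Lemma~\ref{lem:A+onF} at the level of bend vectors: pick a Platonic configuration $\scrV''\ni S$ in $\scrP$; if $\scrV''$ lies in the $\Apollonian^+$-orbit of $\scrV$ take $\scrV'=\scrV''$, otherwise replace it by a configuration $\scrV'\ni S$ adjacent to $\scrV''$ along a pairwise-tangent quadruple through $S$, which adds one inversion and so lands in the $\Apollonian^+$-orbit by the parity argument; then Corollary~\ref{lem:Aonb} gives $\bsb'=\bsb(\scrV')\in\Apollonian^+\bsb$, and $b(S)$ appears among the bends of $\scrV'$, hence — via Corollary~\ref{coro:Antipodal} — either as some $b'_k$ or as some complimentary bend $2b'_\mu-b'_k$.
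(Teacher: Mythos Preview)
Your proposal is correct and matches the paper's approach: the paper presents this corollary explicitly as a restatement of Lemma~\ref{lem:A+onF} ``in terms of bend vectors'' and gives no separate proof, so your ``second column shadow'' argument is exactly the intended one-line justification. The alternative argument you sketch at the end is also fine and simply reproduces the proof of Lemma~\ref{lem:A+onF} at the level of bends.
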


The main source of difficulty in studying the orbits of a vector $\bsb$ under the action of $\Apollonian$ and $\Apollonian^+$ is that they are \emph{thin} groups in $\SL_5(\bbZ)$; many of the powerful techniques in the theory of algebraic groups do not apply directly. We will circumvent this issue by focusing on the bends of spheres tangent to a chosen sphere in a primitive orthoplicial Apollonian sphere packing, following the approach taken for the classical \emph{tetrahedral} Apollonian circle packings in \cite{S:Letter}, \cite{BF:Positive}, \cite{BK:Strong}, \cite{B:Prime}, for the \emph{simplicial} Apollonian sphere packings in \cite{K:Soddy}, and for the \emph{octahedral} Apollonian circle packings in \cite{Z:Octahedral}.

\begin{defn}
The \emph{$S_1$-stabilizer} $\Apollonian_1<\Apollonian<\OF(\bbZ)$ is defined to be the subgroup generated by the 8 matrices in $\calS_1:=\{\bsS_{1jk\ell} \mid \bsS_{1jk\ell} \in \calS\}$. The \emph{oriented $S_1$-stabilizer} $\Apollonian^+_1<\Apollonian_1$ is the subgroup consisting of matrices with determinant 1, i.e.\;$\Apollonian^+_1:=\Apollonian_1 \cap \Apollonian^+<\SOF(\bbZ)$.
\end{defn}

The $S_1$-stabilizer $\Apollonian_1$ fixes the first row of $F$-matrices, representing the first sphere in the corresponding Platonic configurations. The oriented $S_1$-stabilizer $\Apollonian^+_1$ is the index 2 kernel of the determinant on the $S_1$-stabilizer $\Apollonian_1$, consisting of elements that can be, and can only be, written as even-length words in the generators $\calS_1=\{\bsS_{1jk\ell}\}$ of $\Apollonian_1$; hence, $\Apollonian^+_1$ is generated by $\{\bsS_{1jk\ell}\bsS_{1j'\!k'\!\ell'\!} \mid \bsS_{1jk\ell}, \bsS_{1j'\!k'\!\ell'\!} \in \calS_1\}$, which can easily be reduced to the set $\calS^+_1$ consisting of the 7 matrices
\begin{align*}
\begin{array}{lll}
\bsS_{238}:=\bsS_{1234}\bsS_{1238}, &
\bsS_{274}:=\bsS_{1234}\bsS_{1274}, &
\bsS_{634}:=\bsS_{1234}\bsS_{1634}, \\
\bsS_{278}:=\bsS_{1234}\bsS_{1278}, &
\bsS_{638}:=\bsS_{1234}\bsS_{1638}, &
\bsS_{674}:=\bsS_{1234}\bsS_{1674}, \\ &
\bsS_{678}:=\bsS_{1234}\bsS_{1678},
\end{array}
\end{align*}
which are given explicitly by
{\small
\begin{align*}
\begin{array}{lll} \vspace{2mm}
\bsS_{238}=\medpmatrix{
1&0&0&0&0\\
0&1&0&0&0\\
0&0&1&0&0\\
2&2&2&-1&0\\
2&2&2&0&-1
}, &
\bsS_{274}=\medpmatrix{
1&0&0&0&0\\
0&1&0&0&0\\
2&2&-1&2&0\\
0&0&0&1&0\\
2&2&0&2&-1
}, &
\bsS_{634}=\medpmatrix{
1&0&0&0&0\\
2&-1&2&2&0\\
0&0&1&0&0\\
0&0&0&1&0\\
2&0&2&2&-1
}, \\ \vspace{2mm}
\bsS_{278}=\medpmatrix{
1&0&0&0&0\\
0&1&0&0&0\\
2&2&-1&-2&4\\
2&2&-2&-1&4\\
4&4&-2&-2&5
}, &
\bsS_{638}=\medpmatrix{
1&0&0&0&0\\
2&-1&2&-2&4\\
0&0&1&0&0\\
2&-2&2&-1&4\\
4&-2&4&-2&5
}, &
\bsS_{674}=\medpmatrix{
1&0&0&0&0\\
2&-1&-2&2&4\\
2&-2&-1&2&4\\
0&0&0&1&0\\
4&-2&-2&4&5
}, \\&
\bsS_{678}=\medpmatrix{
1&0&0&0&0\\
2&-1&-2&-2&8\\
2&-2&-1&-2&8\\
2&-2&-2&-1&8\\
6&-4&-4&-4&19
}.
\end{array}
\end{align*}
}

Let $\scrP$ be a primitive orthoplicial Apollonian packing. Choose a sphere $S \in \scrP$, with the bend $b=b(S)$, and a configuration $\scrV$ containing $S$ as the first sphere, with the bend vector $\bsb=\bsb(\scrV)=(b, b_2, b_3, b_4, b_\mu)^\trans$. We shall study the set $\scrB(\scrP)$ by looking at the orbit $\Apollonian^+_1\bsb$ of this initial bend vector $\bsb$. Any bend vector $\bsb' \in \Apollonian^+_1\bsb$ is of the form $\bsb'=(b,b'_2,b'_3,b'_4,b'_\mu)$ and, by \hyperref[thm:DGM]{Theorem~\ref*{thm:DGM}}, lies on a section of the cone defined by the orthoplicial Descartes form $F$. Although the full Apollonian group $\Apollonian$ and the oriented Apollonian group $\Apollonian^+$ are intractable thin groups, the oriented $S_1$-stabilizer $\Apollonian^+_1$ admits an affine parametrization. Adapting the ideas of Sarnak \cite{S:Letter}, we establish this fact in two steps below. 

As the first step, we apply a suitable change of variables so that $\Apollonian^+_1<\SOF(\bbZ)$ is conjugated to a subgroup $\hat \Apollonian^+_1<\SODelta(\bbZ)$, preserving the discriminant
\begin{align} \label{eqn:HermDeltaDefn}
\varDelta(H)=\varDelta(A,B,C,D):=B^2+C^2-AD
\end{align}
of binary hermitian forms $H(\bsxi):=\bsxi^\herm \bsH \bsxi$, associated to hermitian matrices
\[
\bsH:=\medpmatrix{
A&B+iC\\
B-iC&D
},
\]
on $\bsxi=(\alpha,\beta)^\trans$ explicitly by
\begin{align} \label{eqn:Hermitian}
\begin{split}
H(\bsxi):&=A\bar \alpha\alpha+(B+iC)\bar \alpha\beta+(B-iC)\bar \beta\alpha+D\bar \beta\beta\\
&=A|\alpha|^2+2B\Re(\bar \alpha\beta)+2C\Im(\bar \beta\alpha)+D|\beta|^2
\end{split}
\end{align}

\begin{lem} \label{lem:ChangeVar}
The linear change of variables $\hat \bsb=\bsJ \bsb$ from $\bsb=(b,b_2,b_3,b_4,b_\mu)^\trans$ to $\hat \bsb=(b,A,B,C,D)^\trans$, given by  
\[
\bsJ:=
\medpmatrix{
1&0&0&0&0\\
1&1&0&0&0\\
-1/2&-1/2&-1/2&-1/2&1\\
-1/2&-1/2&-1/2&1/2&0\\
1&0&1&0&0
},
\]
conjugates $\Apollonian^+_1<\SOF(\bbZ)$ onto $\hat \Apollonian^+_1:=\bsJ \Apollonian^+_1 \bsJ^{-1}<\SODelta(\bbZ)$, preserving the discriminant \hyperref[eqn:HermDeltaDefn]{(\ref*{eqn:HermDeltaDefn})}, embedded in $\SL_5(\bbZ)$ as the lower $4\times4$ minors for our purpose.
\end{lem}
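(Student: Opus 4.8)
The plan is to establish the claim in three steps, roughly in increasing order of computational work: (i) that each conjugate $\hat\bsP:=\bsJ\bsP\bsJ^{-1}$, $\bsP\in\Apollonian^+_1$, has block shape $\operatorname{diag}(1,\hat\bsP')$; (ii) that the transformed quadratic form splits so that the block $\hat\bsP'$ preserves $\varDelta$; and (iii) that $\hat\bsP$ is integral. Steps (i) and (ii) are structural and are precisely what forces the particular choice of $\bsJ$; step (iii) is the only part that is not pure formalism.

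For step (i), I would first record two things fixed by the whole $S_1$-stabilizer $\Apollonian_1$: the row covector $e_1^\trans$ and the column vector $\bsv^\ast:=\bsGSigmaF e_1=(1,-1,-1,-1,-1)^\trans$. That every $\bsP\in\Apollonian_1$ satisfies $e_1^\trans\bsP=e_1^\trans$ is immediate, since by construction each generator in $\calS_1$ has first row $e_1^\trans$. For $\bsv^\ast$: combine the defining relation $\bsP^\trans\bsQF\bsP=\bsQF$ with $\bsQF=2\bsGSigmaF^{-1}$ to get $\bsP\bsGSigmaF\bsP^\trans=\bsGSigmaF$, apply both sides to $e_1$, and use $\bsP^\trans e_1=e_1$ to conclude $\bsP\bsv^\ast=\bsv^\ast$. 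A short direct check shows that $\bsJ$ (and hence $\bsJ^{-1}$) has first row $e_1^\trans$ and that $\bsv^\ast$ is the first column of $\bsJ^{-1}$; equivalently $\bsJ\bsv^\ast=e_1$. Consequently $\hat\bsP$ satisfies $e_1^\trans\hat\bsP=e_1^\trans$ and $\hat\bsP e_1=e_1$, so $\hat\bsP=\operatorname{diag}(1,\hat\bsP')$ with $\det\hat\bsP'=\det\bsP=1$; this is exactly the embedding ``as the lower $4\times4$ minors'' appearing in the statement.

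For step (ii), I would compute the transformed Gram matrix $\hat\bsQF:=(\bsJ^{-1})^\trans\bsQF\bsJ^{-1}$ (equivalently $2(\bsJ\bsGSigmaF\bsJ^\trans)^{-1}$) and verify that it splits orthogonally as
\[
\hat\bsQF=\medpmatrix{2&0&0&0&0\\0&0&0&0&-1\\0&0&2&0&0\\0&0&0&2&0\\0&-1&0&0&0},
\]
i.e.\;that in the new coordinates $(b,A,B,C,D)^\trans=\bsJ(b_1,b_2,b_3,b_4,b_\mu)^\trans$ the form $F$ becomes $2b^2+2\varDelta(A,B,C,D)=2b^2+2B^2+2C^2-2AD$. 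Granting this, and noting that each $\hat\bsP\in\hat\Apollonian^+_1$ preserves $\hat\bsQF$ (being $\bsJ$-conjugate to an element of $\SOF(\bbZ)$), the block-diagonal shape from step (i) forces $\hat\bsP'$ to preserve the quaternary form $2\varDelta$, hence $\varDelta$; together with $\det\hat\bsP'=1$ this places $\hat\Apollonian^+_1$ inside $\SODelta$.

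Step (iii) is where I expect the bulk of the work to lie. Since $\det\bsJ=-\tfrac12$, the matrix $\bsJ$ is not in $\SL_5(\bbZ)$, so conjugation by $\bsJ$ need not preserve integrality, and $\bsJ\Apollonian^+_1\bsJ^{-1}\subset\SL_5(\bbZ)$ has to be checked. I would do this on generators: conjugate each of the seven matrices comprising $\calS^+_1$ by $\bsJ$ and check directly that every $\bsJ\bsS_{jk\ell}\bsJ^{-1}$ is an integer matrix of determinant $1$, necessarily of the block shape from step (i) --- for instance $\bsJ\bsS_{238}\bsJ^{-1}=\operatorname{diag}(1,1,-1,-1,1)$ --- recording the resulting $4\times4$ blocks, which land in $\SODelta(\bbZ)$ and will be needed later. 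Because a matrix of $\SL_5(\bbZ)$ has an integral inverse, the set consisting of these seven conjugates together with their inverses lies in $\SL_5(\bbZ)$ and generates $\hat\Apollonian^+_1$, so $\hat\Apollonian^+_1<\SL_5(\bbZ)$; combined with steps (i)--(ii) this gives $\hat\Apollonian^+_1<\SODelta(\bbZ)$ embedded as claimed, and the lemma follows. In short, the main obstacle is just the generator-level computation --- the block factorization of $\hat\bsQF$ together with the seven integrality checks --- with everything else forced by the two $\Apollonian_1$-invariances isolated in step (i).
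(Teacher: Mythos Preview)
Your proposal is correct and, at its computational core, coincides with the paper's proof: both arguments ultimately reduce the lemma to conjugating each of the seven generators in $\calS^+_1$ by $\bsJ$ and reading off that the results are integer matrices of the required block shape (the paper in fact lists all seven $\hat\bsS_{jk\ell}$ explicitly, which you will need anyway for the subsequent spin-preimage lemma).

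Where the two presentations diverge is in the surrounding scaffolding. The paper motivates $\bsJ$ by factoring it as a composition of two changes of variables --- first $h_k=b+b_k$ to isolate $b$, then a second substitution to reach $(A,B,C,D)$ --- thereby deriving the identity $F\mapsto 2b^2+2\varDelta$ narratively rather than by a Gram-matrix computation; it then simply observes the block shape from the explicit $\hat\bsS_{jk\ell}$. Your step~(i), by contrast, gives an \emph{a priori} reason for the block structure via the two $\Apollonian_1$-invariants $e_1^\trans$ and $\bsv^\ast=\bsGSigmaF e_1$, which is a genuinely cleaner explanation and makes clear that the block shape holds for all of $\Apollonian_1$, not just the oriented subgroup. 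Your step~(ii) Gram-matrix check is equivalent to the paper's two-step derivation but more compact. Neither route avoids the seven explicit conjugations, since $\det\bsJ\notin\bbZ^\times$ forces integrality to be verified by hand.
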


\begin{proof}
We will compute the generators $\bsJ \calS^+_1 \bsJ^{-1}$ directly, but let us first give an exposition on the role of $\bsJ$; in particular, for the time being, fix an integral packing $\scrP$ and a constituent sphere $S$ with the bend $b=b(S)$, and choose a configuration $\scrV$, containing $S$ as the first sphere, with the bend vector $\bsb=\bsb(\scrV)=(b, b_2, b_3, b_4, b_\mu)$. This bend vector $\bsb$ and any other bend vector in its $\Apollonian^+_1$-orbit is in the conic section
 \begin{align} \label{eqn:Fb}
F(b,b_2,b_3,b_4,b_\mu)=2b_\mu^2-2(b+b_2+b_3+b_4)b_\mu+(b^2+b_2^2+b_3^2+b_4^2)=0,
\end{align}
cut out by the fixed bend $b_1=b$ from the cone defined by $F$. To isolate $b$ in the equation $F(b,b_2,b_3,b_4,b_\mu)=0$, we change the variables by
\begin{align} \label{eqn:b-to-h}
h_2=b+b_2, \quad h_3=b+b_3, \quad h_4=b+b_4, \quad h_\mu=b+b_\mu.
\end{align}
Then, as intended, the equation \hyperref[eqn:Fb]{(\ref*{eqn:Fb})} can be rewritten as
\begin{align} \label{eqn:f}
f(h_2,h_3,h_4,h_\mu):=2h_\mu^2-2(h_2+h_3+h_4)h_\mu+(h_2^2+h_3^2+h_4^2)=-2b^2.
\end{align}
In other words, the action on vectors $(b_2,b_3,b_4,b_\mu)^\trans$, given by the lower right $4\times4$ minors of $\Apollonian^+_1$, is conjugated to the action on vectors $(h_2,h_3,h_4,h_\mu)^\trans$, independent of $b$, preserving quaternary quadratic form $f$. Next, in order to rewrite \hyperref[eqn:f]{(\ref*{eqn:f})} with a more familiar quaternary form, we further change the variables by
\begin{align} \label{eqn:h-to-hatb}
A=h_2, \quad B=\frac{-h_2-h_3-h_4+2h_\mu}{2}, \quad C=\frac{-h_2-h_3+h_4}{2}, \quad D=h_3.
\end{align}
Then, the equation \hyperref[eqn:Fb]{(\ref*{eqn:Fb})} can be rewritten as $2(B^2+C^2-AD)=-2b^2$, or equivalently
\begin{align*}
\varDelta(A,B,C,D)=B^2+C^2-AD=-b^2.
\end{align*}
In other words, the action of $\Apollonian^+_1$ on the vectors $(b_2,b_3,b_4,b_\mu)^\trans$ is now conjugated to the action on vectors $(A,B,C,D)^\trans$, independent of $b$, preserving the discriminant $\varDelta(H)=\varDelta(A,B,C,D)$ of binary hermitian forms \hyperref[eqn:Hermitian]{(\ref*{eqn:Hermitian})} as desired.

Let us now compute the above conjugation explicitly, working with the full $5\times5$ matrices rather than the $4\times4$ minors. The action on vectors $\bsb=(b,b_2,b_3,b_4,b_\mu)^\trans$ is conjugated to the action on vectors $\hat \bsb=(b,A,B,C,D)$, preserving
\begin{align*}
\hat F(\hat b):=\hat F(b,A,B,C,D)=2b^2+2(B^2+C^2-AD)=0.
\end{align*}
Combining the changes of variables \hyperref[eqn:b-to-h]{(\ref*{eqn:b-to-h})} and \hyperref[eqn:h-to-hatb]{(\ref*{eqn:h-to-hatb})}, with the first variable unchanged, we have a linear change of variables $\hat \bsb=\bsJ\bsb$ where
\[
\bsJ:=
\medpmatrix{
1&0&0&0&0\\
0&1&0&0&0\\
0&-1/2&-1/2&-1/2&1\\
0&-1/2&-1/2&1/2&0\\
0&0&1&0&0
}
\medpmatrix{
1&0&0&0&0\\
1&1&0&0&0\\
1&0&1&0&0\\
1&0&0&1&0\\
1&0&0&0&1
}
=\medpmatrix{
1&0&0&0&0\\
1&1&0&0&0\\
-1/2&-1/2&-1/2&-1/2&1\\
-1/2&-1/2&-1/2&1/2&0\\
1&0&1&0&0
}.
\]
Hence, $\bsJ$ conjugates the action of $\Apollonian^+_1<\SOF(\bbZ)$ on vectors $\bsb=(b,b_2,b_3,b_4,b_\mu)^\trans$ to the action of $\hat \Apollonian^+_1=\bsJ \Apollonian^+_1 \bsJ^{-1}<\SOhatF(\bbQ)$ on vectors $\hat \bsb=(b,A,B,C,D)^\trans$. Finally, we note that $\hat\Apollonian^+_1$ is generated by $\hat\calS^+_1:=\bsJ\calS^+_1\bsJ^{-1}$, consisting of matrices
\begin{align*}
\begin{array}{lll} &
\hat\bsS_{238}:=\bsJ\bsS_{238}\bsJ^{-1}, \\
\hat\bsS_{278}:=\bsJ\bsS_{278}\bsJ^{-1}, &
\hat\bsS_{274}:=\bsJ\bsS_{274}\bsJ^{-1}, &
\hat\bsS_{674}:=\bsJ\bsS_{674}\bsJ^{-1}, \\
\hat\bsS_{638}:=\bsJ\bsS_{638}\bsJ^{-1}, &
\hat\bsS_{634}:=\bsJ\bsS_{634}\bsJ^{-1}, &
\hat\bsS_{678}:=\bsJ\bsS_{678}\bsJ^{-1},
\end{array}
\end{align*}
which are given explicitly by
{\small
\begin{align} \label{eqn:hatS}
\begin{array}{lll} \vspace{2mm} &
\hat\bsS_{238}=\medpmatrix{
1&0&0&0&0\\
0&1&0&0&0\\
0&0&-1&0&0\\
0&0&0&-1&0\\
0&0&0&0&1
}, \\ \vspace{2mm} 
\hat\bsS_{278}=\medpmatrix{
1&0&0&0&0\\
0&1&0&0&0\\
0&2&1&0&0\\
0&0&0&1&0\\
0&4&4&0&1
}, &
\hat\bsS_{274}=\medpmatrix{
1&0&0&0&0\\
0&1&0&0&0\\
0&0&-1&0&0\\
0&-2&0&-1&0\\
0&4&0&4&1
}, &
\hat\bsS_{674}=\medpmatrix{
1&0&0&0&0\\
0&5&4&8&4\\
0&2&1&4&2\\
0&-4&-4&-7&-4\\
0&4&4&8&5
}, \\
\hat\bsS_{638}=\medpmatrix{
1&0&0&0&0\\
0&1&4&0&4\\
0&0&1&0&2\\
0&0&0&1&0\\
0&0&0&0&1
}, &
\hat\bsS_{634}=\medpmatrix{
1&0&0&0&0\\
0&1&0&4&4\\
0&0&-1&0&0\\
0&0&0&-1&-2\\
0&0&0&0&1
}, &
\hat\bsS_{678}=\medpmatrix{
1&0&0&0&0\\
0&5&8&4&4\\
0&4&7&4&4\\
0&-2&-4&-1&-2\\
0&4&8&4&5
}.
\end{array}
\end{align}
}
This shows, in particular, that we indeed have $\hat\Apollonian^+_1<\SOhatF(\bbZ)$ as claimed.
\end{proof}

Our next step is to verify that $\hat\Apollonian^+_1$ is indeed in the orthochronous subgroup $\SODelta^\dagger(\bbZ)<\SODelta(\bbZ)$, and identify its \emph{spin preimage} $\bar\Apollonian^+_1$ in $\PSL_2(\bbC)$ as a congruence subgroup of $\varGamma:=\PSL_2(\Gaussian)$. For our purpose, we employ the \emph{spin homomorphism} $\rho: \SL_2(\bbC) \rightarrow \SODelta(\bbR)$, with the target $\SODelta(\bbR)$ embedded in $\SL_5(\bbR)$ as the lower right $4\times4$ minors:
\begin{align*}
\rho: \hspace{3mm} \SL_2(\bbC) \hspace{2mm} &\longrightarrow \SODelta(\bbR) < \SL_5(\bbR)\\
\begin{pmatrix}
\hspace{1mm}\alpha&\beta\hspace{1mm}\\
\hspace{1mm}\gamma&\delta\hspace{1mm}
\end{pmatrix}
&\longmapsto
\medpmatrix{
1&0&0&0&0\\
0&\bar\alpha\alpha&2\Re(\bar\beta\alpha)
&2\Im(\bar\beta\alpha)&\bar\beta\beta\\
0&\Re(\bar\alpha\gamma)&\Re(\bar\beta\gamma+\bar\delta\alpha)
&\Im(\bar\delta\alpha+\bar\beta\gamma)&\Re(\bar\beta\delta)\\
0&\Im(\bar\alpha\gamma)&\Im(\bar\beta\gamma-\bar\delta\alpha)
&\Re(\bar\delta\alpha-\bar\beta\gamma)&\Im(\bar\beta\delta)\\
0&\bar\gamma\gamma&2\Re(\bar\delta\gamma)
&2\Im(\bar\delta\gamma)&\bar\delta\delta
}.
\end{align*}
The kernel of $\rho$ is $\pm \bsI$, and $\rho$ descends to an injection $\bar \rho:\PSL_2(\bbC) \rightarrow \SODelta(\bbR)$, which maps $\PSL_2(\bbC)$ onto the so-called \emph{orthochronous subgroup} $\SODelta^\dagger(\bbR)<\SODelta(\bbR)$. Restricting this map, we have the \emph{spin isomorphism}
\[
\bar \rho:\PSL_2(\bbC) \rightarrow \SODelta^\dagger(\bbR).
\]

\begin{rem}
There are various conventions on how to write down the spin homomorphism. Our choice reflects the underlying convention that $\SL_2(\bbC)$ acts on the left of column $\bbC^2$-vectors via the transpose, and on the left of hermitian forms via the spin homomorphism above.
\end{rem}

Let us also recall the notion of \emph{congruence subgroups}. Given any non-zero ideal $(q) \subset \Gaussian$, the \emph{principal congruence subgroup} $\varGamma(q)<\varGamma:=\PSL_2(\Gaussian)$ of \emph{level} $q$ is the kernel of the modulo $q$ reduction homomorphism, i.e.\;the subgroup consisting of matrices that are congruent to the identity modulo $q$. A subgroup $\varLambda<\varGamma$ is said to be a congruence subgroup if $\varGamma(q)<\varLambda$ for some non-zero ideal $(q)$, and it is said to be of level $q$ if $(q)$ is the maximal non-zero ideal such that $\varGamma(q)<\varLambda<\varGamma$.

\begin{lem} \label{lem:Spin}
The group $\hat\Apollonian^+_1$ is contained in $\SODelta^\dagger(\bbZ)<\SODelta(\bbZ)$, and its spin preimage $\bar\Apollonian^+_1:=\bar\rho^{-1}(\hat\Apollonian^+_1)$ is the folloiwing non-principal congruence subgroup of level 2:
\begin{align} \label{eqn:Spin}
\left\{ \,
\medbmatrix{\alpha&\beta\\\gamma&\delta} \in \PSL_2(\Gaussian)
\;\right|\left.
\medbmatrix{\alpha&\beta\\\gamma&\delta} \equiv
\medbmatrix{\pm1&0\\0&\pm1} \,\text{or}\,\,
\medbmatrix{\pm i&0\\0&\mp i}  \!\!\!\! \mod{2}
\,\right\}.
\end{align}
\end{lem}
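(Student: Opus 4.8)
The plan is to carry out the identification in two stages: first, determine the spin preimages of the seven generators of $\hat\Apollonian^+_1$ from \hyperref[eqn:hatS]{(\ref*{eqn:hatS})} explicitly as matrices over $\Gaussian$, which at once establishes orthochronicity and the inclusion $\bar\Apollonian^+_1\le\varGamma:=\PSL_2(\Gaussian)$; second, match the resulting subgroup with the congruence subgroup $\varLambda$ on the right of \hyperref[eqn:Spin]{(\ref*{eqn:Spin})}.

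For the first stage, I would invert the explicit formula for $\rho$ on each $\hat\bsS_\bullet$: the four ``corner'' entries of its lower right $4\times4$ block give $|\alpha|^2,|\beta|^2,|\gamma|^2,|\delta|^2$, the remaining entries then fix $\alpha,\beta,\gamma,\delta$ up to an overall sign (immaterial in $\PSL_2$), and the relation $\alpha\delta-\beta\gamma=1$ is verified along the way. One finds that $\hat\bsS_{238},\hat\bsS_{638},\hat\bsS_{278},\hat\bsS_{634},\hat\bsS_{274},\hat\bsS_{674},\hat\bsS_{678}$ have spin preimages $\left(\begin{smallmatrix}i&0\\0&-i\end{smallmatrix}\right)$, $\left(\begin{smallmatrix}1&2\\0&1\end{smallmatrix}\right)$, $\left(\begin{smallmatrix}1&0\\2&1\end{smallmatrix}\right)$, $\left(\begin{smallmatrix}i&2\\0&-i\end{smallmatrix}\right)$, $\left(\begin{smallmatrix}i&0\\2&-i\end{smallmatrix}\right)$, $\left(\begin{smallmatrix}1+2i&2\\2&1-2i\end{smallmatrix}\right)$, $\left(\begin{smallmatrix}2+i&2\\2&2-i\end{smallmatrix}\right)$ respectively, all with entries in $\Gaussian$. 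Since each generator of $\hat\Apollonian^+_1$ is thereby exhibited in the image $\SODelta^\dagger(\bbR)$ of $\bar\rho$ and is integral, $\hat\Apollonian^+_1<\SODelta^\dagger(\bbZ)$, and $\bar\Apollonian^+_1=\bar\rho^{-1}(\hat\Apollonian^+_1)$ is the subgroup of $\varGamma$ generated by the seven matrices above. Reducing each of them modulo $2$, every one is congruent to $\left(\begin{smallmatrix}1&0\\0&1\end{smallmatrix}\right)$ or $\left(\begin{smallmatrix}i&0\\0&-i\end{smallmatrix}\right)$, so $\bar\Apollonian^+_1\le\varLambda$; moreover the mod-$2$ image of $\bar\Apollonian^+_1$ already equals that of $\varLambda$ (it is generated by the class of $\left(\begin{smallmatrix}i&0\\0&-i\end{smallmatrix}\right)$, using $-i\equiv i\pmod 2$), so $\bar\Apollonian^+_1$ surjects onto $\varLambda/\varGamma(2)\cong\bbZ/2\bbZ$.

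It remains to prove the reverse inclusion $\varLambda\le\bar\Apollonian^+_1$, which by the surjectivity just noted is equivalent to $\varGamma(2)\le\bar\Apollonian^+_1$. The route I would take is to fix an explicit finite generating set of the principal congruence subgroup $\varGamma(2)<\PSL_2(\Gaussian)$ --- obtained from the classical structure theory of the Picard group $\PSL_2(\Gaussian)$ and its fundamental polyhedron (Bianchi, Picard, Swan), or from a Reidemeister--Schreier computation against a presentation of $\PSL_2(\Gaussian)$ --- and to express each of its members as an explicit word in the seven matrices above. One sees directly that $\bar\Apollonian^+_1$ contains every translation $\left(\begin{smallmatrix}1&2\omega\\0&1\end{smallmatrix}\right)$ and transvection $\left(\begin{smallmatrix}1&0\\2\omega&1\end{smallmatrix}\right)$ with $\omega\in\Gaussian$ (e.g.\ $\left(\begin{smallmatrix}1&0\\2i&1\end{smallmatrix}\right)$ is $\left(\begin{smallmatrix}i&0\\0&-i\end{smallmatrix}\right)\left(\begin{smallmatrix}i&0\\2&-i\end{smallmatrix}\right)$ up to sign), as well as conjugates of these such as $\left(\begin{smallmatrix}1+2i&2\\2&1-2i\end{smallmatrix}\right)=\left(\begin{smallmatrix}1&i\\0&1\end{smallmatrix}\right)\left(\begin{smallmatrix}1&0\\2&1\end{smallmatrix}\right)\left(\begin{smallmatrix}1&-i\\0&1\end{smallmatrix}\right)$; one then checks that elements of this kind exhaust $\varGamma(2)$. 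Equivalently, and more robustly, one can run a Todd--Coxeter coset enumeration of $\bar\Apollonian^+_1$ inside the presented group $\PSL_2(\Gaussian)$ and verify that the index is $24=[\varGamma:\varLambda]$, which together with $\bar\Apollonian^+_1\le\varLambda$ forces equality.

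The main obstacle is precisely this last step. Whereas $\Gamma(2)<\PSL_2(\bbZ)$ is free of rank $2$ on $\left(\begin{smallmatrix}1&2\\0&1\end{smallmatrix}\right)$ and $\left(\begin{smallmatrix}1&0\\2&1\end{smallmatrix}\right)$ --- so that the corresponding step in the $\PSL_2(\bbZ)$ setting of \cite{S:Letter} is immediate --- the group $\varGamma(2)<\PSL_2(\Gaussian)$ has a more intricate (non-free) structure, being the fundamental group of a cusped hyperbolic $3$-manifold, so assembling a correct finite generating set and verifying the requisite word identities --- equivalently, carrying the coset enumeration to a definite index --- is where the real work lies; the remaining parts of the argument are routine bookkeeping with $\rho$ and \hyperref[lem:ChangeVar]{Lemma~\ref*{lem:ChangeVar}}.
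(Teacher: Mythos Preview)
Your proposal is correct and follows essentially the same route as the paper: compute explicit spin preimages of the seven generators (your list matches the paper's $\bar\calS^+_1$ exactly), conclude $\bar\Apollonian^+_1\le\varLambda$ by reduction mod~$2$, and then establish the reverse inclusion by expressing a generating set for $\varLambda$ (equivalently, $\varGamma(2)$ together with $\bar\bsS_{238}$) as words in $\bar\calS^+_1$. The only difference is cosmetic: the paper writes down a concrete list of ``small'' matrices in $\varLambda$ and exhibits each as a word in $\bar\calS^+_1$, simply asserting that this list generates $\varLambda$, whereas you are more explicit that justifying such a generating set for $\varGamma(2)<\PSL_2(\Gaussian)$ (via the Bianchi/Swan fundamental domain, Reidemeister--Schreier, or an index computation) is where the real content lies.
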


\begin{proof}
We define $\bar\calS^+_1 \subset \varGamma$ to be the set of the following seven matrices:
\begin{align} \label{eqn:barS}
\begin{array}{lll} \vspace{2mm} &
\bar\bsS_{238}:=\medbmatrix{
\hspace{0.4mm}i&0\hspace{0.4mm}\\
\hspace{0.4mm}0&-i\hspace{0.4mm}
}, \\ \vspace{2mm} 
\bar\bsS_{278}:=\medbmatrix{
\hspace{0.4mm}1&0\hspace{0.4mm}\\
\hspace{0.4mm}2&1\hspace{0.4mm}
}, &
\bar\bsS_{274}:=\medbmatrix{
\hspace{0.4mm}i&0\hspace{0.4mm}\\
\hspace{0.4mm}2&-i\hspace{0.4mm}
}, &
\bar\bsS_{674}:=\medbmatrix{
\hspace{0.4mm}1+2i&2\hspace{0.4mm}\\
\hspace{0.4mm}2&1-2i\hspace{0.4mm}
}, \\
\bar\bsS_{638}:=\medbmatrix{
\hspace{0.4mm}1&2\hspace{0.4mm}\\
\hspace{0.4mm}0&1\hspace{0.4mm}
}, &
\bar\bsS_{634}:=\medbmatrix{
\hspace{0.4mm}i&2\hspace{0.4mm}\\
\hspace{0.4mm}0&-i\hspace{0.4mm}
}, &
\bar\bsS_{678}:=\medbmatrix{
\hspace{1.3mm}2+i\hspace{0.9mm}&\hspace{0.7mm}2\hspace{1.4mm}\\
\hspace{1.3mm}2\hspace{0.7mm}&\hspace{0.7mm}2-i\hspace{1.4mm}
}.
\end{array}
\end{align}
Direct computations verifies their spin images are the matrices of $\hat\calS^+_1$ shown in \hyperref[eqn:hatS]{(\ref*{eqn:hatS})}:
\begin{align*}
\begin{array}{lll} &
\hat\bsS_{238}=\bar\rho(\bar\bsS_{238}), \\
\hat\bsS_{278}=\bar\rho(\bar\bsS_{278}), &
\hat\bsS_{274}=\bar\rho(\bar\bsS_{274}), &
\hat\bsS_{674}=\bar\rho(\bar\bsS_{674}), \\
\hat\bsS_{638}=\bar\rho(\bar\bsS_{638}), &
\hat\bsS_{634}=\bar\rho(\bar\bsS_{634}), &
\hat\bsS_{678}=\bar\rho(\bar\bsS_{678}).
\end{array}
\end{align*}
Hence, it follows immediately that $\hat\Apollonian^+_1$ is indeed a subgroup of $\SODelta^\dagger(\bbZ)<\SODelta(\bbZ)$, and its spin preimage $\bar\Apollonian^+_1:=\bar\rho^{-1}(\hat\Apollonian^+_1)$ is a subgroup of $\varGamma=\PSL_2(\Gaussian)$, generated by the seven matrices of $\bar\calS^+_1$ defined in \hyperref[eqn:barS]{(\ref*{eqn:barS})} above.

It remains to check that $\bar\Apollonian^+_1$ is indeed the congruence subgroup \hyperref[eqn:Spin]{(\ref*{eqn:Spin})}, which we denote by $\varLambda$ for the time being. Each generator in \hyperref[eqn:barS]{(\ref*{eqn:barS})} satisfies the congruence relations \hyperref[eqn:Spin]{(\ref*{eqn:Spin})}, and thus $\Apollonian^+_1\leqslant\varLambda$. On the other hand, $\varLambda$ is generated by small matrices satisfying the congruence relation \hyperref[eqn:Spin]{(\ref*{eqn:Spin})}. We can compute and list them explicitly, and verify that each of these matrices are indeed in $\bar\Apollonian^+_1$, and thus $\varLambda\leqslant\bar\Apollonian^+_1$:
\begin{align*}
\begin{array}{c}
\begin{matrix} \vspace{2mm} 
\medbmatrix{
\hspace{0.6mm}i&0\hspace{0.6mm}\\
\hspace{0.6mm}0&-i\hspace{0.6mm}
}=\bar\bsS_{238},&
\medbmatrix{
\hspace{0.6mm}1&0\hspace{0.6mm}\\
\hspace{0.6mm}\pm2&1\hspace{0.6mm}
}=\bar\bsS_{278}^{\pm1},&
\medbmatrix{
\hspace{0.6mm}i&0\hspace{0.6mm}\\
\hspace{0.6mm}2&-i\hspace{0.6mm}
}=\bar\bsS_{274}, &
\medbmatrix{
\hspace{0.6mm}i&0\hspace{0.6mm}\\
\hspace{0.6mm}-2&-i\hspace{0.6mm}
}=\bar\bsS_{238}\bar\bsS_{274}\bar\bsS_{238}, \\
\vspace{2mm} &
\medbmatrix{
\hspace{0.6mm}1&\pm2\hspace{0.6mm}\\
\hspace{0.6mm}0&1\hspace{0.6mm}
}=\bar\bsS_{638}^{\pm1}, &
\medbmatrix{
\hspace{0.6mm}i&2\hspace{0.6mm}\\
\hspace{0.6mm}0&-i\hspace{0.6mm}
}=\bar\bsS_{634}, &
\medbmatrix{
\hspace{1.4mm}i\hspace{0.6mm}&-2\hspace{1.4mm}\\
\hspace{1.4mm}0\hspace{0.6mm}&-i\hspace{1.4mm}
}=\bar\bsS_{238}\bar\bsS_{634}\bar\bsS_{238}, \\
\end{matrix}
\\
\begin{matrix} \vspace{2mm}
\hspace{8.5mm}
\medbmatrix{
1&0\hspace{0.6mm}\\
\hspace{0.6mm}\pm2i&1\hspace{0.6mm}
}=(\bar\bsS_{238}\bar\bsS_{274})^{\pm1}, &
\medbmatrix{
\hspace{0.6mm}i&0\hspace{0.6mm}\\
\hspace{0.6mm}\pm2i&-i\hspace{0.6mm}
}=\bar\bsS_{238}\bar\bsS_{278}^{\mp1}, \\
\hspace{8.5mm}
\medbmatrix{
\hspace{0.6mm}1&\pm2i\hspace{0.6mm}\\
\hspace{0.6mm}0&1\hspace{0.6mm}
}=(\bar\bsS_{238}\bar\bsS_{634})^{\mp1}, &
\medbmatrix{
\hspace{1.4mm}i\hspace{0.6mm}&\pm2i\hspace{1.4mm}\\
\hspace{1.4mm}0\hspace{0.6mm}&-i\hspace{1.4mm}
}=\bar\bsS_{238}\bar\bsS_{638}^{\pm1}.
\end{matrix}
\end{array}
\end{align*}
Hence, we conclude $\varLambda=\bar\Apollonian^+_1$ as claimed.
\end{proof}

Since $\bar\Apollonian^+_1$ is a congruence subgroup of $\PSL_2(\Gaussian)$, we can now parametrize its elements by the congruence relations \hyperref[eqn:Spin]{(\ref*{eqn:Spin})}. In particular, we have the following lem about the bends of some spheres in an orthoplicial Apollonian packings.

\begin{lem} \label{lem:b2}
Let $\scrP$ be an orthoplicial Apollonian packing, and let $\scrV \subset \scrP$ be an orthoplicial Platonic configuration with the bend vector $\bsb=(b, b_2, b_3, b_4, b_\mu)^\trans$. Then, for any $\alpha,\beta \in \Gaussian$ satisfying $\alpha \equiv 1$ or $\; i$, $\beta \equiv 0 \pmod{2}$, the number
\begin{align} \label{eqn:b2}
\begin{split}
b'_2=&(|\alpha|^2-\Re(\bar\alpha \beta)-\Im(\bar\beta \alpha)+|\beta|^2-1)\ts b \\
&+(|\alpha|^2-\Re(\bar\alpha \beta)-\Im(\bar\beta \alpha))\ts b_2\\
&+(-\Re(\bar\alpha \beta)-\Im(\bar\beta \alpha)+|\beta|^2)\ts b_3\\
&+(-\Re(\bar\alpha \beta)+\Im(\bar\beta \alpha))\ts b_4\\
&+2\Re(\bar\alpha \beta)\ts b_\mu
\end{split}
\end{align}
appears as the bend of some sphere in the Apollonian packing $\scrP$.
\end{lem}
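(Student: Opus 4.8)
The plan is to realize the right-hand side of~\eqref{eqn:b2} as the second component of a bend vector in the $\Apollonian^+_1$-orbit of $\bsb$, using the explicit parametrization of $\Apollonian^+_1$ built up in Lemmas~\ref{lem:ChangeVar} and~\ref{lem:Spin}. The first point to record is that this suffices: by Lemma~\ref{lem:AonF} (and its proof) the $\Apollonian$-orbit of the $F$-matrix $\bsF$ of $\scrV$ consists precisely of the $F$-matrices of \emph{all} Platonic configurations in $\scrP$, so for any $M\in\Apollonian^+_1<\Apollonian$ the matrix $M\bsF$ is the $F$-matrix of some $\scrV'\subset\scrP$ and its bend vector $M\bsb$ has every component equal to the bend of a constituent sphere of $\scrV'$, hence to a bend occurring in $\scrP$. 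Thus it is enough to produce $M\in\Apollonian^+_1$ whose effect on $\bsb=(b,b_2,b_3,b_4,b_\mu)^\trans$ has second coordinate equal to the stated expression.

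\textbf{Producing the group element.} By Lemmas~\ref{lem:ChangeVar} and~\ref{lem:Spin} we have $\Apollonian^+_1=\bsJ^{-1}\,\bar\rho(\varLambda)\,\bsJ$, where $\varLambda<\varGamma=\PSL_2(\Gaussian)$ is the level-$2$ congruence subgroup~\eqref{eqn:Spin}. Given $\alpha,\beta\in\Gaussian$ as in the statement, I would complete $(\alpha,\beta)$ to a matrix $g=\left(\begin{smallmatrix}\alpha&\beta\\\gamma&\delta\end{smallmatrix}\right)\in\varLambda$: a solution of $\alpha\delta-\beta\gamma=1$ exists since $(\alpha,\beta)$ is unimodular, the congruence $\delta\equiv\alpha^{-1}\pmod 2$ needed for $g\in\varLambda$ is automatic because $\beta\equiv 0\pmod 2$, and one arranges $\gamma\equiv 0\pmod 2$ by replacing $(\gamma,\delta)$ with $(\gamma+t\alpha,\delta+t\beta)$ for a suitable $t$, using that $\alpha$ is a unit modulo $2$. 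Then $M:=\bsJ^{-1}\bar\rho(g)\bsJ\in\Apollonian^+_1$, and $\bsb':=M\bsb$ lies in $\Apollonian^+_1\bsb$.

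\textbf{Computing $b'_2$.} Pass through $\bsJ$: with $\hat\bsb=\bsJ\bsb=(b,A,B,C,D)^\trans$ and $\hat\bsb'=\bsJ\bsb'=\bar\rho(g)\hat\bsb=(b,A',B',C',D')^\trans$, the first coordinate is fixed by $\bar\rho(g)$, and reading the second row of the spin matrix $\rho(g)$ gives
\[
A'=|\alpha|^2A+2\Re(\bar\beta\alpha)\,B+2\Im(\bar\beta\alpha)\,C+|\beta|^2D,
\]
which is also just the $(1,1)$-entry of the transformed hermitian matrix $\bar g\,\bsH\, g^{\trans}$. Since the second row of $\bsJ$ reads $A=b+b_2$, the same relation applied to $\bsb'$ gives $A'=b+b'_2$, so $b'_2=A'-b$. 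Now substitute the rows of $\bsJ$ — namely $A=b+b_2$, $B=\tfrac12(-b-b_2-b_3-b_4+2b_\mu)$, $C=\tfrac12(-b-b_2-b_3+b_4)$, $D=b+b_3$, which encode the change of variables \eqref{eqn:b-to-h}--\eqref{eqn:h-to-hatb} — and use $\Re(\bar\beta\alpha)=\Re(\bar\alpha\beta)$ and $\Im(\bar\beta\alpha)=-\Im(\bar\alpha\beta)$; collecting the coefficients of $b,b_2,b_3,b_4,b_\mu$ yields exactly~\eqref{eqn:b2}. Finally, $b'_2$ is the second component of $M\bsb$, i.e.\ the bend of the second sphere of the configuration in $\scrP$ with $F$-matrix $M\bsF$, which proves the claim.

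\textbf{Expected difficulty.} The argument is essentially mechanical once Lemmas~\ref{lem:ChangeVar} and~\ref{lem:Spin} are in hand; the two places that demand care are (i) the bookkeeping in the last step — selecting the correct row of $\rho(g)$, correctly undoing $\bsJ$, and the conjugation identities relating $\bar\alpha\beta$ and $\bar\beta\alpha$ — and (ii) confirming that every $(\alpha,\beta)$ meeting the stated congruences extends to an element of $\varLambda$, which reduces to unimodularity of rows of $\PSL_2(\Gaussian)$ together with the elementary mod-$2$ adjustment above. Neither is a real obstacle, but (i) is the natural place for an arithmetic slip, and is where I would check the computation twice.
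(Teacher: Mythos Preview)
Your approach is essentially identical to the paper's: complete the pair $(\alpha,\beta)$ to an element $\bar\bsA\in\bar\Apollonian^+_1$, pull it back to $\bsA\in\Apollonian^+_1$ via $\bsJ^{-1}\bar\rho(\cdot)\bsJ$, and identify the second component of $\bsA\bsb$ with the expression~\eqref{eqn:b2}. The only cosmetic difference is that the paper writes out the second row of the $5\times5$ matrix $\bsA$ directly, whereas you route through the hermitian variable $A'$ and then undo $\bsJ$; the two computations are equivalent, and your bookkeeping with $\Re(\bar\beta\alpha)=\Re(\bar\alpha\beta)$ is correct.

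One genuine caveat: your assertion that ``$(\alpha,\beta)$ is unimodular'' is not justified by the hypotheses. The congruences $\alpha\equiv 1$ or $i$, $\beta\equiv 0\pmod 2$ do \emph{not} force $\gcd_{\Gaussian}(\alpha,\beta)=1$; for instance $\alpha=3$, $\beta=6$ satisfies them but has $\gcd=3$, and then no completion to $\PSL_2(\Gaussian)$ exists. The paper's own proof glosses over exactly the same point, simply asserting that suitable $\gamma,\delta$ exist, so you are not doing worse than the source; but strictly speaking both arguments, as written, establish the lemma only for coprime pairs $(\alpha,\beta)$. Your mod-$2$ adjustment for $\gamma$ is fine once coprimality is in hand, but it does not manufacture coprimality.
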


\begin{proof}
For any $\alpha, \beta \in \Gaussian$ satisfying $\alpha \equiv 1$ or $i$, $\beta \equiv 0 \pmod{2}$, there exists $\gamma, \delta \in \Gaussian$ such that
\[
\bar\bsA:=\medbmatrix{
\hspace{0.4mm}\alpha&\beta\hspace{0.4mm}\\
\hspace{0.4mm}\gamma&\delta\hspace{0.4mm}
} \in \bar\Apollonian^+_1<\PSL_2(\Gaussian).
\]
Then, via \hyperref[lem:ChangeVar]{Lemma~\ref*{lem:ChangeVar}} and \hyperref[lem:Spin]{Lemma~\ref*{lem:Spin}}, we set $\bsA:=\bsJ \bar\rho(\bar\bsA) \bsJ^{-1} \in \Apollonian^+_1<\SL_5(\bbZ)$. The direct computation shows that the second row of $\bsA$ is
\[
\medpmatrix{
|\alpha|^2-\Re(\bar\alpha \beta)-\Im(\bar\beta \alpha)+|\beta|^2-1 \\
|\alpha|^2-\Re(\bar\alpha \beta)-\Im(\bar\beta \alpha) \hspace{16.0mm} \\
\hspace{0.6mm} - \hspace{0.8mm} \Re(\bar\alpha \beta)-\Im(\bar\beta \alpha)+|\beta|^2 \\
- \hspace{0.8mm} \Re(\bar\alpha \beta)+\Im(\bar\beta \alpha) \hspace{9.4mm} \\
2\Re(\bar\alpha \beta) \hspace{23.6mm}
}^{\!\!\trans}.
\]
Hence, given an initial bend vector $\bsb=(b, b_2, b_3, b_4, b_\mu)^\trans$, its $\Apollonian^+_1$-orbit contains the bend vector $\bsA\bsb$, whose second component is precisely $b'_2$ given in \hyperref[eqn:b2]{(\ref*{eqn:b2})}; in other words, the bend of the second sphere in the configuration $\scrV \subset \scrP$ corresponding to $\bsA\bsb$ is precisely $b'_2$ given in \hyperref[eqn:b2]{(\ref*{eqn:b2})}.
\end{proof}

\subsection{The Local-Global Principle}

We now establish our asymptotic \emph{local-global principle} for integral orthoplicial Apollonian packings, addressing the statements (b) and (c) in \hyperref[ssec:LO]{\S\ref*{ssec:LO}} preceding \hyperref[prop:LO]{Proposition~\ref*{prop:LO}}. For this, we translate the question about the set $\scrB(\scrP)$ of integers appearing as bends of spheres in $\scrP$ to a well-studied question about the representability of integers by quadratic forms.

Recall that we have found the change of variables that conjugates $\Apollonian^+_1<\SOF(\bbZ)$, which preserves the orthoplicial Descartes form $F$, to $\hat\Apollonian^+_1<\SODelta(\bbZ)$, which preserves the discriminant $\varDelta$ of a binary hermitian form \hyperref[eqn:Hermitian]{(\ref*{eqn:Hermitian})}. Explicitly, for the initial bend vector $\bsb=(b, b_2, b_3, b_4, b_\mu)^\trans$, we choose $A,B,C,D$ by the change of bases $(b,A,B,C,D)^\trans=\bsJ(b, b_2, b_3, b_4, b_\mu)^\trans$ according to \hyperref[lem:ChangeVar]{Lemma~\ref*{lem:ChangeVar}}, or equivalently by
\begin{align} \label{eqn:ABCD}
\begin{array}{ll}
\displaystyle A:=b+b_2, &
\displaystyle B:=-\frac{b+b_2+b_3+b_4-2b_\mu}{2}, \\
\displaystyle C:=-\frac{b+b_2+b_3-b_4}{2}, &
\displaystyle D:=b+b_3
\end{array}
\end{align}
It should be noted here that $B$ and $C$ are integers since $b+b_2+b_3+b_4$ is always even by \hyperref[prop:LO]{Proposition~\ref*{prop:LO}}. With these $A,B,C,D$, we define the binary hermitian form $H_{\bsb}(\bsxi):=\bsxi^\herm \bsH_{\bsb} \bsxi$, associated to the matrix
\[
\bsH_{\bsb}:=\medpmatrix{
A&B+iC\\
B-iC&D
},
\]
on $\bsxi=(\alpha,\beta)^\trans$ explicitly by
\begin{align*}
\begin{split}
H_{\bsb}(\bsxi):
&=A|\alpha|^2+2B\Re(\bar \alpha\beta)+2C\Im(\bar \beta\alpha)+D\bar |\beta|^2\\
&=(b+b_2)|\alpha|^2-(b+b_2+b_3+b_4-b_\mu)\Re(\bar \alpha\beta)\\
&\hspace{15mm}-(b+b_2+b_3-b_4)\Im(\bar \beta\alpha)+(b+b_3)|\beta|^2.
\end{split}
\end{align*}
Now, writing $\alpha=\alpha_1+i\alpha_2$ with $\alpha_1:=\Re(\alpha)$ and $\alpha_2:=\Im(\alpha)$, $\beta=\beta_1+i\beta_2$ with $\beta_1:=\Re(\beta)$ and $\beta_2:=\Im(\beta)$, and regarding the complex vectors $\bsxi=(\alpha,\beta)^\trans$ as real vectors $\bseta=(\alpha_1,\alpha_2,\beta_1,\beta_2)^\trans$, we can define the corresponding quaternary quadratic form $Q_{\bsb}(\bseta):=H_{\bsb}(\bsxi)$. Namely, we define the quaternary quadratic form $Q_{\bsb}(\bseta):=\bseta^\trans \bsQ_{\bsb} \bseta$, associated to the matrix
\[
\bsQ_{\bsb}:=\medpmatrix{
A&0&B&-C\\
0&A&C&B\\
B&C&D&0\\
-C&B&0&D
}
\]
with $A,B,C,D$ from \hyperref[eqn:ABCD]{(\ref*{eqn:ABCD})}, on $\bseta=(\alpha_1,\alpha_2,\beta_1,\beta_2)^\trans$ explicitly by
\begin{align} \label{eqn:Qb}
\begin{split}
Q_{\bsb}(\bseta):&
=A(\alpha_1^2+\alpha_2^2)+2B(\alpha_1\beta_1+\alpha_2\beta_2)
+2C(\alpha_2\beta_1-\alpha_1\beta_2)+D(\beta_1^2+\beta_2^2)\\
&=(b+b_2)(\alpha_1^2 + \alpha_2^2)
-(b+b_2+b_3+b_4-2b_\mu) (\alpha_1 \beta_1 + \alpha_2 \beta_2)\\
&\hspace{15mm}-(b+b_2+b_3-b_4) (\alpha_2 \beta_1 - \alpha_1 \beta_2)
+(b+b_3) (\beta_1^2 + \beta_2^2)
\end{split}
\end{align}
With the binary hermitian form $H_{\bsb}$ and the quaternary quadratic form $Q_{\bsb}$ above, \hyperref[lem:b2]{Lemma~\ref{lem:b2}} can now be reinterpreted as follows.

\begin{coro} \label{coro:b2}
Let $\scrP$ be an orthoplicial Apollonian packing, and let $\scrV \subset \scrP$ be an orthoplicial Platonic configuration with the bend vector $\bsb=(b, b_2, b_3, b_4, b_\mu)^\trans$. Then, for any $\bsxi=(\alpha,\beta)^\trans$ and $\bseta=(\alpha_1,\alpha_2,\beta_1,\beta_2)^\trans$ with $\alpha,\beta \in \Gaussian$ satisfying $\alpha \equiv 1$ or $\; i$, $\beta \equiv 0 \pmod{2}$, $H_{\bsb}(\bsxi)-b=Q_{\bsb}(\bseta)-b$ coincides with $b'_2$ in  \hyperref[eqn:b2]{(\ref*{eqn:b2})}, and appears as the bend of some sphere in the Apollonian packing $\scrP$.
\end{coro}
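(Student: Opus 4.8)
The plan is to recognize \hyperref[coro:b2]{Corollary~\ref*{coro:b2}} as a pure change-of-coordinates restatement of \hyperref[lem:b2]{Lemma~\ref*{lem:b2}}, so that the only work is an algebraic identity; no new geometric or group-theoretic input is needed.

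First I would verify the identity $H_{\bsb}(\bsxi)=Q_{\bsb}(\bseta)$. This is automatic from the definitions: writing $\alpha=\alpha_1+i\alpha_2$ and $\beta=\beta_1+i\beta_2$, one has $|\alpha|^2=\alpha_1^2+\alpha_2^2$, $|\beta|^2=\beta_1^2+\beta_2^2$, $\Re(\bar\alpha\beta)=\alpha_1\beta_1+\alpha_2\beta_2$, and $\Im(\bar\beta\alpha)=\alpha_2\beta_1-\alpha_1\beta_2$, and inserting these into $H_{\bsb}(\bsxi)=A|\alpha|^2+2B\Re(\bar\alpha\beta)+2C\Im(\bar\beta\alpha)+D|\beta|^2$ reproduces exactly $\bseta^\trans\bsQ_{\bsb}\bseta$. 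Hence it suffices to work with the hermitian form.

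Next I would substitute the values $A=b+b_2$, $B=-\tfrac12(b+b_2+b_3+b_4-2b_\mu)$, $C=-\tfrac12(b+b_2+b_3-b_4)$, $D=b+b_3$ from \hyperref[eqn:ABCD]{(\ref*{eqn:ABCD})} into $H_{\bsb}(\bsxi)$ and collect the coefficients of $b,b_2,b_3,b_4,b_\mu$. A direct expansion gives coefficient of $b$ equal to $|\alpha|^2-\Re(\bar\alpha\beta)-\Im(\bar\beta\alpha)+|\beta|^2$, of $b_2$ equal to $|\alpha|^2-\Re(\bar\alpha\beta)-\Im(\bar\beta\alpha)$, of $b_3$ equal to $-\Re(\bar\alpha\beta)-\Im(\bar\beta\alpha)+|\beta|^2$, of $b_4$ equal to $-\Re(\bar\alpha\beta)+\Im(\bar\beta\alpha)$, and of $b_\mu$ equal to $2\Re(\bar\alpha\beta)$. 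Comparing these with the five coefficients in \hyperref[eqn:b2]{(\ref*{eqn:b2})} shows, term by term, that $H_{\bsb}(\bsxi)=b'_2+b$, i.e.\ $H_{\bsb}(\bsxi)-b=b'_2$; the only discrepancy is the extra $-1$ in the $b$-coefficient of $b'_2$, which is precisely the subtracted $b$. I would note in passing that $B$ and $C$ are integers, since $b+b_2+b_3+b_4$ is even by \hyperref[prop:LO]{Proposition~\ref*{prop:LO}}; this is what makes $\bsQ_{\bsb}$ an integral matrix, although it is not needed for the identity itself.

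Finally, since the hypotheses on $\alpha,\beta$ are exactly those of \hyperref[lem:b2]{Lemma~\ref*{lem:b2}}, that lemma gives that $b'_2$ occurs as the bend of a sphere in $\scrP$; combined with the identity just established, $H_{\bsb}(\bsxi)-b=Q_{\bsb}(\bseta)-b=b'_2$ is the bend of some sphere in $\scrP$, as asserted. I do not expect a genuine obstacle here; the one thing to watch is keeping the sign conventions consistent — in particular that $C$ is built from $\Im(\bar\beta\alpha)$ rather than $\Im(\bar\alpha\beta)$, which is why the $b_4$-coefficient carries the opposite sign of $\Im$ from the other terms — so that the matching of coefficients with \hyperref[eqn:b2]{(\ref*{eqn:b2})} goes through cleanly.
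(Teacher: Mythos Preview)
Your proposal is correct and follows essentially the same approach as the paper: the paper's proof simply says to rearrange \hyperref[eqn:b2]{(\ref*{eqn:b2})} to recognize it as $H_{\bsb}(\bsxi)-b$ (hence $Q_{\bsb}(\bseta)-b$) and then invoke \hyperref[lem:b2]{Lemma~\ref*{lem:b2}}. Your write-up carries out this rearrangement in more explicit detail than the paper does, but the logic is identical.
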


\begin{proof}
Rearranging \hyperref[eqn:b2]{(\ref*{eqn:b2})}, we see that the expression of $b'_2$ in \hyperref[eqn:b2]{(\ref*{eqn:b2})} coincides $H_{\bsb}(\bsxi)-b$, and hence with $Q_{\bsb}(\bseta)-b$; the statement then follows from \hyperref[lem:b2]{Lemma~\ref*{lem:b2}}
\end{proof}

Hence, we can study the set $\scrB(\scrP)$ of bends in a primitive orthoplicial Apollonian packing $\scrP$ by investigating the integers represented by the binary hermitian form $H_\bsb(\bsxi)$ and the shifted form $H'_\bsb(\bsxi):=H_{\bsb}(\bsxi)-b$, or equivalently by the quaternary quadratic form $Q_\bsb(\bseta)$ and the shifted form $Q'_\bsb(\bseta):=Q_{\bsb}(\bseta)-b$.

To establish our main result, we utilize the well-known \emph{local-global principle} for quadratic forms. One of the central questions in the theory of quadratic forms asks if and when an integer $n$ can be represented by a given quadratic form $Q$ globally, i.e.\;over $\bbZ$, provided that $n$ is represented by $Q$ locally, i.e.\;over $\bbZp$ for all prime $p$; see \cite{Duke:Survey}, \cite{Hanke:Survey} for surveys of the subject. Kloosterman's work on the circle method yields the satisfactory answer for positive-definite quaternary forms: every (effectively bounded) sufficiently large locally represented integer $n$ with \emph{a priori} bounded divisibility at anisotropic primes can be represented globally. Here, a prime $p$ is said to be \emph{anisotropic} for a quadratic form $Q$ and $Q$ is said to be \emph{anisotropic} at $p$, if the equation $Q(\bseta)=0$ only has the zero solution over $\bbZp$; otherwise, $p$ is said to be \emph{isotropic} for $Q$ and $Q$ is said to be \emph{isotropic} at $p$. Any anisotropic prime always divides the discriminant of the form $Q$. The exposition on the circle method and the local-global principle for quadratic forms can be found in \cite[Thm.\,20.9]{IK}; see also \cite[Thm.\,6.3]{Hanke:Explicit} for an explicit bound.

\begin{lem} \label{lem:DeltaPD}
For any bend vector $\bsb=(b, b_2, b_3, b_4, b_\mu)^\trans$ of an orthoplicial Platonic configuration, the quadratic form $Q_{\bsb}$ is positive-semidefinite and has the discriminant $\varDelta(Q_{\bsb})=(2b)^4$; moreover, it is positive-definite if and only if $b \neq 0$.
\end{lem}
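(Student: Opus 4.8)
The plan is to verify the three claims directly from the explicit shape of the hermitian matrix $\bsH_{\bsb}$ (equivalently $\bsQ_{\bsb}$) together with the Descartes relation $F(\bsb)=0$ from \hyperref[thm:DGM]{Theorem~\ref*{thm:DGM}}. Recall that for a binary hermitian form $H$ with matrix $\bsH=\medbmatrix{A&B+iC\\B-iC&D}$, the associated real quaternary form $Q$ has matrix $\bsQ_{\bsb}$ as displayed above, and a short computation of a $4\times4$ determinant gives $\det\bsQ_{\bsb}=(AD-B^2-C^2)^2=\varDelta(H)^2$. So the key is to identify $\varDelta(H_{\bsb})=AD-B^2-C^2$ in terms of the bends. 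By the change of variables in \hyperref[lem:ChangeVar]{Lemma~\ref*{lem:ChangeVar}} — which was engineered precisely so that $2(B^2+C^2-AD)=F(b,b_2,b_3,b_4,b_\mu)-2b^2\cdot(\text{sign})$ — one has $B^2+C^2-AD=-b^2$, hence $\varDelta(H_{\bsb})=AD-B^2-C^2=b^2$ and $\det\bsQ_{\bsb}=b^4$. Since the discriminant in the convention of this paper carries the factor $2^4$ coming from the off-diagonal halving (the standard Gram-matrix-of-$2Q$ convention), this gives $\varDelta(Q_{\bsb})=2^4\det\bsQ_{\bsb}=(2b)^4$, establishing the discriminant claim.

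For semidefiniteness, I would argue that a binary hermitian form $H$ over $\bbC$ is positive-semidefinite if and only if its matrix has nonnegative trace and nonnegative determinant, i.e.\;$A+D\ge 0$ and $AD-B^2-C^2\ge0$; and $Q(\bseta)=H(\bsxi)$ takes exactly the same values (with $\bsxi=(\alpha_1+i\alpha_2,\beta_1+i\beta_2)$), so $Q_{\bsb}$ is positive-semidefinite precisely when $H_{\bsb}$ is. We have just seen $AD-B^2-C^2=b^2\ge0$, so it remains to check $A+D=(b+b_2)+(b+b_3)\ge0$. Here I would invoke the geometry: for a Platonic configuration containing $S_1$ as the first sphere, $S_1$ is tangent to $S_2$ and to $S_3$, and in an orthoplicial Apollonian packing the relevant sum of bends of a tangent pair is nonnegative (indeed, $A=b+b_2=h_2$ and $D=b+b_3=h_3$ are the "$h$"-variables of \hyperref[eqn:b-to-h]{(\ref*{eqn:b-to-h})}, which satisfy $f(h_2,h_3,h_4,h_\mu)=-2b^2\le0$; more concretely, if $S_1,S_2$ are externally tangent honest spheres with positive bends then $b+b_2>0$, while if one of them is the unique enclosing sphere of negative bend then $b_1$ or $b_2$ equals the negative bend and the sum with its tangent neighbour is still positive since the small sphere sits inside). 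I would state this as: in any orthoplicial configuration $b_i+b_j\ge0$ whenever $S_i,S_j$ are tangent, with equality only when both are planar, which follows from $\varSigma(\bsv_i,\bsv_j)=-1$ and the formula for the inversive product in terms of bends and centres. Hence $A+D\ge0$ and $Q_{\bsb}$ is positive-semidefinite.

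Finally, for the positive-definiteness dichotomy: $Q_{\bsb}$ is positive-definite iff it is positive-semidefinite and nondegenerate, i.e.\;iff $\det\bsQ_{\bsb}=b^4\ne0$, i.e.\;iff $b\ne0$. Conversely if $b=0$ then $\det\bsQ_{\bsb}=0$ so $Q_{\bsb}$ is semidefinite but not definite. This completes the proof. The main obstacle is the sign input $A+D\ge0$: everything else is forced algebraically by $F(\bsb)=0$, but the nonnegativity of the tangent-pair bend sum needs the genuinely geometric fact that the orienting regions of the eight spheres in a Platonic configuration have disjoint interiors (so at most one bend is negative, and it belongs to the enclosing sphere), which is why I would phrase it as a consequence of $\varSigma(\bsv_i,\bsv_j)=-1$ for tangent non-antipodal pairs together with the enclosure picture discussed after \hyperref[exa:V1]{Example~\ref*{exa:V1}}.
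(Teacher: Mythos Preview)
Your proposal is correct and follows essentially the same route as the paper: both compute $\det\bsQ_{\bsb}=(\det\bsH_{\bsb})^2$ and use $F(\bsb)=0$ to get $\det\bsH_{\bsb}=AD-B^2-C^2=b^2$, and both reduce positive-semidefiniteness to the $2\times2$ hermitian matrix $\bsH_{\bsb}$, needing the single geometric input that the trace $A+D=2b+b_2+b_3$ is nonnegative. The only cosmetic differences are that the paper writes out the eigenvalues of $\bsH_{\bsb}$ explicitly rather than invoking the trace/determinant criterion, and justifies $2b+b_2+b_3\ge0$ via ``at most one bend is negative'' rather than your tangent-pair inequality $b+b_j\ge0$; your formulation of that geometric step is arguably cleaner.
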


\begin{proof}
We verify the claims by direct computation. First, the discriminant of $Q_{\bsb}$ can be computed explicitly as
\begin{align} \label{eqn:DeltaQb}
\varDelta(Q_{\bsb}):=2^4 \det \bsQ_{\bsb}=2^4 (\det \bsH_{\bsb})^2
=2^4\left(\frac{1}{\,2\,}F(\bsb)-b^2\right)^{\!2}=(2b)^4,
\end{align}
where $F$ is the orthplicial Descartes form \hyperref[eqn:Fzeta]{(\ref*{eqn:Fzeta})} satisfying $F(\bsb)=0$ for any bend vector $\bsb$ by \hyperref[eqn:DGM-diag]{(\ref*{eqn:DGM-diag})}. Next, the characteristic polynomial $\chi_{\bsH_{\bsb}}(\lambda)$ of $\bsH_{\bsb}$ is
\[
\chi_{H_{\bsb}}(\lambda)
=\lambda^2-(2b+b_2+b_3)-\left(\frac{1}{\,2\,}F(\bsb)-b^2\right)
=\lambda^2-(2b+b_2+b_3)+b^2
\]
with eigenvalues
\begin{align} \label{eqn:lambda}
\lambda=\frac{1}{\,2\,} \left(2b+b_2+b_3 \pm \sqrt{(2 b+b_2+b_3)^2-(2b)^2}\right)
\end{align}
and the characteristic polynomial $\chi_{\bsQ_{\bsb}}(\lambda)$ of $\bsQ_{\bsb}$ is
\begin{align*}
\chi_{Q_{\bsb}}(\lambda)
=\big(\chi_{H_{\bsb}}(\lambda)\big)^2
=\big(\lambda^2-(2b+b_2+b_3)-b^2\big)^2
\end{align*}
with the same eigenvalues \hyperref[eqn:lambda]{(\ref*{eqn:lambda})} as $\bsH_{\bsb}$ but with double multiplicities. From the discriminant $(2 b+b_2+b_3)^2-(2b)^2$ of $\chi_{\bsH_\bsb}(\lambda)$, we see that the eigenvalues are real if and only if $b_2+b_3 \geqslant 0$; this is indeed the case for any bend vector, since any orthoplicial configuration $\scrV$ has at most one negative bend, which is necessarily the bend of the largest sphere enclosing all other spheres. Finally, observing that $2b+b_2+b_3 \geqslant \sqrt{(2b+b_2+b_3)^2-(2b)^2}$ is equivalent to $(2b)^2 \geqslant 0$, all eigenvalues \hyperref[eqn:lambda]{(\ref*{eqn:lambda})} are non-negative, and the smaller one vanishes if and only if $b \neq 0$.
\end{proof}

Given a primitive orthoplicial Apollonian packing $\scrP$, let $\varepsilon=\varepsilon(\scrP)$ from \hyperref[prop:LO]{Proposition~\ref*{prop:LO}} and write $\scrA(\scrP):=\{ n \in \bbZ \mid n \not\equiv -\varepsilon \}$. \hyperref[prop:LO]{Proposition~\ref*{prop:LO}} guarantees $\scrB(\scrP) \subset \scrA(\scrP)$. The asymptotic \emph{local-global principle} we are going to establish states that sufficiently large integer $n \in \scrA(\scrP)$ must be in $\scrB(\scrP)$. We first give the following preliminary version of the local-global principle; for \emph{simplicial} Apollonian packings, the analogous statement is given by Kontorovich in \cite[Prop.\,3.26]{K:Soddy}. 

\begin{prop} \label{prop:LG-prelim}
Let $\scrP$ be a primitive orthoplicial Apollonian packing and $b \in \scrB(\scrP)$ such that $b \neq 0$. If $n \in \scrA(\scrP)$ is sufficiently large integer satisfying $\gcd(n,b)=1$, then $n \in \scrB(\scrP)$.
\end{prop}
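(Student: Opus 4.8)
The plan is to realize suitable integers $n$ as bends through \hyperref[coro:b2]{Corollary~\ref*{coro:b2}}, reducing the problem to representing $m:=n+b$ by the positive-definite quaternary forms $Q_{\bsb}$ inside a fixed coset, and then to invoke Kloosterman's circle-method theorem for quaternary forms.

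Fix a sphere $S \in \scrP$ with $b(S) = b \neq 0$. For each admissible ordering of a configuration $\scrV \subset \scrP$ having $S$ as its first sphere, with bend vector $\bsb = (b, b_2, b_3, b_4, b_\mu)^{\trans}$, \hyperref[coro:b2]{Corollary~\ref*{coro:b2}} says that every value $Q_{\bsb}(\bseta) - b$ with $\bseta = (\alpha_1,\alpha_2,\beta_1,\beta_2)$ lying in the coset $\mathcal{C}$ defined by $\alpha_1 \not\equiv \alpha_2$, $\beta_1 \equiv \beta_2 \equiv 0 \pmod 2$ belongs to $\scrB(\scrP)$. Since $|\alpha|^2 \equiv 1 \pmod 4$ and $\beta$ is even on $\mathcal{C}$, one checks $Q_{\bsb}(\bseta) \equiv b + b_2 \pmod 4$ there, so the integers produced from a given ordering are exactly those $m-b$ with $m$ represented by $Q_{\bsb}|_{\mathcal{C}}$, and all such $m$ lie in one residue class modulo $4$. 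On the other hand, by \hyperref[prop:LO]{Proposition~\ref*{prop:LO}} the eight bends of $\scrV$, reduced modulo $4$, form (in some order) the pattern $(0,0,\varepsilon,\varepsilon,2,2,\varepsilon,\varepsilon)$ with $\varepsilon = \varepsilon(\scrP)$; inspecting this pattern in each of the three cases $b \equiv 0, 2, \varepsilon \pmod 4$, the spheres tangent to $S$ realize \emph{all} of the residues $0, 2, \varepsilon$ modulo $4$, i.e.\ all of $\scrA(\scrP) \bmod 4$. Hence, by choosing which tangent sphere plays the role of $S_2$, we may arrange $b_2$ to be congruent to any prescribed $r \in \scrA(\scrP) \bmod 4$.

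It therefore suffices to show: for each $r \in \scrA(\scrP) \bmod 4$, after choosing an ordering with $b_2 \equiv r \pmod 4$, every sufficiently large $n \equiv r \pmod 4$ with $\gcd(n,b) = 1$ is of the form $Q_{\bsb}(\bseta) - b$, $\bseta \in \mathcal{C}$. I would deduce this from the circle method. First, $Q_{\bsb}$ is isotropic at every odd prime: it is the quadratic form underlying the binary hermitian form $\bsH_{\bsb}$ over $\Gaussian$ with $\det\bsH_{\bsb} = b^2$ a square, hence over $\bbQ$ it is equivalent to a form $\langle a_1,a_1,a_2,a_2\rangle$ with $a_1 a_2$ a square times a norm from $\bbQ(i)$, and a Hilbert-symbol computation (the norm form of $\bbQ(i)/\bbQ$ being $\langle 1,1\rangle$) shows its Hasse--Witt invariant is trivial at every finite place, leaving $p=2$ as the only possible anisotropic prime. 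Second, $v_2(n+b)$ is bounded a priori: if $b$ is even then $n$ is odd, so $n+b$ is odd; if $b$ is odd then $b\equiv\varepsilon(\scrP)\pmod 4$ (as $b\in\scrB(\scrP)\subseteq\scrA(\scrP)$), and since $n\in\scrA(\scrP)$ forces $n\bmod 4\in\{0,\varepsilon(\scrP),2\}$ one gets $n+b\not\equiv 0\pmod 4$. Third, $Q_{\bsb}(\bseta)=m:=n+b$, $\bseta\in\mathcal{C}$, is locally soluble at every prime: at odd $p\nmid b$ the form is $p$-unimodular of rank $\ge 3$, hence universal over $\bbZ_p$; at odd $p\mid b$ one uses that $m\equiv n$ is a $p$-unit (as $\gcd(n,b)=1$) and that $\bsb$ is primitive (by \hyperref[lem:IntegralApollonian]{Lemma~\ref*{lem:IntegralApollonian}}), which via the formulas $A=b+b_2$, $D=b+b_3$, etc.\ forbids $Q_{\bsb}\equiv 0\pmod p$, to produce a nonsingular $\bbZ_p$-point; and at $p=2$ a finite $2$-adic check — choosing $S_2$ among the spheres tangent to $S$ so as to avoid any spurious higher obstruction — shows $\mathcal{C}$ represents over $\bbZ_2$ exactly those $m\equiv b+b_2\equiv r+b\pmod 4$, i.e.\ precisely the condition forced by $n\in\scrA(\scrP)$. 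With positive-definiteness (\hyperref[lem:DeltaPD]{Lemma~\ref*{lem:DeltaPD}}), the only anisotropic prime being $2$ with $v_2(m)\le 1$, and local solubility everywhere, Kloosterman's theorem \cite[Thm.\,20.9]{IK} (in the effective form \cite[Thm.\,6.3]{Hanke:Explicit}) yields an effectively computable $N_r(\scrP,b)$ such that every $m>N_r(\scrP,b)$ in the relevant class is globally represented in $\mathcal{C}$; then $N(\scrP,b):=\max_r N_r(\scrP,b)-b$ works.

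I expect the main obstacle to be the fine structure at $p=2$: one must verify that, for a suitable choice of the second sphere tangent to $S$, the $2$-adic representability of $m$ by $Q_{\bsb}|_{\mathcal{C}}$ is \emph{exactly} the congruence $m\equiv b+b_2\pmod 4$ — with no residual obstruction modulo $8$ or $16$ — so that each residue class of $\scrA(\scrP)$ is settled by a single application of Kloosterman's theorem. The uniform triviality of the odd Hasse invariants and the a priori bound $v_2(m)\le 1$ are the two clean structural facts that make the circle method applicable at all despite $Q_{\bsb}$ being a thin-group object and (typically) anisotropic at $2$.
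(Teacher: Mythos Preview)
Your approach is the paper's approach: reduce via \hyperref[coro:b2]{Corollary~\ref*{coro:b2}} to representing $m=n+b$ by $Q_{\bsb}$ on the coset $\mathcal{C}$, match the residue $m\equiv b+b_2\pmod 4$ by reordering $\scrV$ through \hyperref[prop:LO]{Proposition~\ref*{prop:LO}}, bound $v_2(m)\le 1$, and invoke Kloosterman. The one place you diverge is the handling of odd primes $p\mid b$. You prove isotropy of $Q_{\bsb}$ at every odd prime via a Hasse--Witt computation (essentially observing $Q_{\bsb}\sim_{\bbQ} A\cdot\langle 1,1,1,1\rangle$, with trivial Hasse invariant at odd $p$), so that only $p=2$ can be anisotropic. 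The paper does something more elementary here: it never establishes odd-prime isotropy at this stage, but simply notes that any anisotropic prime must divide the discriminant $\varDelta(Q_{\bsb})=(2b)^4$, and that $\gcd(b,n)=1$ forces $\gcd(b,m)=1$, so $m$ has zero (hence bounded) $p$-adic valuation at every odd $p\mid b$. This uses the coprimality hypothesis more directly and avoids the Hilbert-symbol detour. Your isotropy argument is essentially a preview of the paper's subsequent \hyperref[lem:Isotropic]{Lemma~\ref*{lem:Isotropic}} (proved there by writing down explicit null vectors rather than computing invariants), which the paper deliberately postpones so as to upgrade this proposition to \hyperref[thm:LG]{Theorem~\ref*{thm:LG}} \emph{without} the coprimality assumption. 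Both routes are valid; the paper's is shorter for the preliminary statement, while yours correctly isolates $p=2$ as the genuinely anisotropic place.
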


\begin{proof}
Let $S \in \scrP$ be a constituent sphere with the bend $b=b(S)$. We choose an orthoplicial configuration $\scrV$ in $\scrP$, containing $S$ as the first sphere, with the bend vector $\bsb=(b, b_2, b_3, b_4, b_\mu)^\trans$. Let $Q_{\bsb}$ be the quaternary quadratic form \hyperref[eqn:Qb]{(\ref*{eqn:Qb})}, i.e.\;defined on $\bseta=(\alpha_1,\alpha_2,\beta_1,\beta_2)^\trans$ by
\begin{align*}
Q_{\bsb}(\bseta):&=(b+b_2)(\alpha_1^2+\alpha_2^2)-(b+b_2+b_3+b_4-2b_\mu) (\alpha_1 \beta_1 + \alpha_2 \beta_2)\\
&\hspace{15mm}-(b+b_2+b_3-b_4) (\alpha_2 \beta_1 - \alpha_1 \beta_2)+(b+b_3)(\beta_1^2 + \beta_2^2).
\end{align*}
Note that, since $\bsb$ is a bend vector, it follows from \hyperref[prop:LO]{Proposition~\ref*{prop:LO}} that $B$ and $C$ are integers and $2B=b+b_2+b_3+b_4-2b_\mu$ and $2C=b+b_2+b_3-b_4$ are even.

We assume $\alpha=\alpha_1+i\alpha_2,\beta=\beta_1+i\beta_2 \in \Gaussian$ satisfying the congruence conditions $\alpha \equiv 1$ or $\; i$, $\beta \equiv 0 \pmod{2}$, i.e.\;$\alpha_1,\alpha_2$ must have opposite parities and $\beta_1,\beta_2$ are both even. It follows that we have $\alpha_1 \beta_1 + \alpha_2 \beta_2\equiv\alpha_2 \beta_1 - \alpha_1 \beta_2 \equiv 0\pmod{2}$, $\alpha_1^2+\alpha_2^2 \equiv 1\pmod{4}$, and $\beta_1^2 + \beta_2^2\equiv0\pmod{4}$. Reducing $Q_\bsb$ modulo 4, we obtain
\begin{align} \label{eqn:b2mod4}
Q_{\bsb}(\bseta)\equiv b+b_2 \pmod{4}
\end{align}
Note that this is the only local obstruction for $Q_\bsb(\bseta)$; for any odd prime $p$, we can choose $\alpha_1\equiv\alpha_2 \equiv 0 \pmod{p}$ and vary $\beta_1,\beta_2$ over the entire $\bbZp$, so that $Q_\bsb(\bseta) \equiv \beta_1^2+\beta_2^2$ ranges over the entire $\bbZp$.

Let $n \in \scrA(\scrP)$ be an integer satisfying $\gcd(b,n)=1$ and set $n':=b+n$. Rearranging the ordering on $\scrV$ with the action of $\Platonic$ if necessary, we may assume by \hyperref[prop:LO]{Proposition~\ref*{prop:LO}} that the bend vector $\bsb$ satisfies $b_2 \equiv n \pmod{4}$; then $n' \equiv b+n \pmod{4}$ is locally represented by $Q_\bsb$, cf.\;\hyperref[eqn:b2mod4]{(\ref*{eqn:b2mod4})}. We write $\varepsilon=\varepsilon(\scrP)$ from \hyperref[prop:LO]{Proposition~\ref*{prop:LO}}, so that $n \not\equiv -\varepsilon \pmod{4}$. Now, we observe two consequences of $\gcd(b,n)=1$. First, it immediately follows that $\gcd(b,n')=1$. Second, ruling out the even cases, we have the multi-set congruence $\{b,n\} \equiv \{0,\varepsilon\}, \{2,\varepsilon\}$, or $\{\varepsilon,\varepsilon\} \pmod{4}$. In the first two cases, we have $n' \equiv \pm \varepsilon \pmod{4}$, so $\gcd(2,n')=1$. In the last case, we have $n' \equiv 2 \pmod{4}$, which means that $n'$ is divisible by 2 exactly once. Hence, combining these observations, we see that $n'$ has bounded divisibility at prime divisors of the discriminant, $\varDelta(Q_\bsb)=(2b)^4$ given by \hyperref[lem:DeltaPD]{Lemma~\ref{lem:DeltaPD}}.

The form $Q_\bsb$ is positive-definite, also by \hyperref[lem:DeltaPD]{Lemma~\ref{lem:DeltaPD}}, and $n'$ is locally represented by $Q_\bsb$ with bounded divisibility at prime divisors of $\varDelta(Q_\bsb)$; hence, by the Kloosterman's work discussed in the paragraph preceding \hyperref[lem:DeltaPD]{Lemma~\ref{lem:DeltaPD}}, there exists $N=N(Q_\bsb)$ such that, if $n'>N$, then $n'$ is globally represented by $Q_\bsb$. Hence, if $n$ is sufficiently large so that $n'$ is sufficiently large, then $n'$ is represented by the form $Q_\bsb$ and $n$ is represented by the shifted form $Q'_\bsb$. Finally, it then follows that $n$ appears in $\scrB(\scrP)$ by \hyperref[coro:b2]{Corollary~\ref*{coro:b2}}.
\end{proof}

It is crucial to note that \hyperref[prop:LG-prelim]{Proposition~\ref*{prop:LG-prelim}} alone does not readily imply the local-global principle, even for primitive orthoplicial Apollonian packings. From \hyperref[prop:LG-prelim]{Proposition~\ref*{prop:LG-prelim}}, we can deduce that, given a primitive orthoplicial Apollonian packing $\scrP$ and a \emph{finite} collection $\scrS$ of constituent spheres in $\scrP$, there exists a number $N(\scrS)$ such that any integer $n \in \scrA(\scrP)$ satisfying $n>N$ and coprime to each bend in $\scrB(\scrS)$ is represented in $\scrB(\scrP)$. However, there are an infinite number of integers that are not coprime to any of the bends in $\scrB(\scrS)$, e.g. multiples of products of the bends of spheres in $\scrS$; the primitivity of $\scrP$ only means that the bend of a sphere in $\scrP$ is coprime to the bend of \emph{some} sphere in $\scrP$, but not necessarily a sphere in $\scrS$. Enlarging the collection $\scrS$ to cover more integers is futile, and we may end up enlarging the bound $N(\scrS)$ indefinitely. We remark that the same issue arise in simplicial Apollonian packings, but it seems to be overlooked in \cite{K:Soddy}.

We must use another subtle property of the quadratic form $Q_\bsb$ to strengthen \hyperref[prop:LG-prelim]{Proposition~\ref*{prop:LG-prelim}}, so that we can obtain a single bound $N(\scrP)$ up front. The next lemma serves this purpose by removing the need to impose the coprimitive condition on the integer $n$ all together.

\begin{lem} \label{lem:Isotropic}
For any bend vector $\bsb=(b,b_2,b_3,b_4,b_\mu)^\trans$ of any orthoplicial Platonic configuration $\scrV$, the quadratic form $Q_\bsb$ is isotropic at every prime.
\end{lem}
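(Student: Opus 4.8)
The plan is to use, in an essential way, the fact built into the construction of $Q_\bsb$: it is nothing but the binary Hermitian form $H_\bsb$ over $\Gaussian$ regarded as a quaternary quadratic form over $\bbZ$, via $Q_\bsb(\bseta)=H_\bsb(\bsxi)$ under the identification $\bsxi=(\alpha,\beta)^\trans\leftrightarrow\bseta=(\alpha_1,\alpha_2,\beta_1,\beta_2)^\trans$. Thus for each prime $p$, a primitive zero of $Q_\bsb$ over $\bbZ_p$ — equivalently a nonzero zero modulo $p$ that lifts — corresponds exactly to a nonzero zero of $H_\bsb$ over the algebra $\bbQ_p(i):=\bbQ_p\otimes_\bbQ\bbQ(i)$. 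So the first move is to replace ``$Q_\bsb$ is isotropic at $p$'' by ``$H_\bsb$ is isotropic over $\bbQ_p(i)$'' (and, at $p=2$, by the slightly weaker statement that $Q_\bsb$ has a nonzero zero modulo $2$, which is all that the definition of isotropy over $\bbZp$ demands there).

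The second move is to observe that the isotropy of a nondegenerate binary Hermitian form over a completion is controlled entirely by its determinant. Over $\bbQ_p(i)$ (in which $2$ is invertible) one diagonalizes $H_\bsb\cong\langle a_1,a_2\rangle$ with $a_1a_2\equiv\det\bsH_\bsb$ modulo norms, and $\langle a_1,a_2\rangle$ is isotropic iff $-a_1a_2$ — equivalently $-\det\bsH_\bsb$ — lies in $N_{\bbQ_p(i)/\bbQ_p}\!\big(\bbQ_p(i)^\times\big)$. (If $\det\bsH_\bsb=0$, i.e. $b=0$, the form is degenerate and trivially isotropic; if $p\equiv1\bmod4$ then $\bbQ_p(i)\cong\bbQ_p\times\bbQ_p$, the norm map is surjective, and $H_\bsb$ is automatically isotropic.) It then remains to read off $\det\bsH_\bsb$: tracing the change of variables of \hyperref[lem:ChangeVar]{Lemma~\ref*{lem:ChangeVar}} together with $F(\bsb)=0$ from \hyperref[thm:DGM]{Theorem~\ref*{thm:DGM}} gives $\varDelta(H_\bsb)=B^2+C^2-AD=-b^2$, hence $\det\bsH_\bsb=b^2$ — a perfect square, so that $-\det\bsH_\bsb=-b^2=(-1)\cdot N_{\bbQ(i)/\bbQ}(b)$.

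Consequently the whole question collapses to a single local question about $-1$: whether $-1\in N_{\bbQ_p(i)/\bbQ_p}\!\big(\bbQ_p(i)^\times\big)$. For every odd prime $p$ this holds — if $p\equiv1\bmod4$ because the extension is split, and if $p\equiv3\bmod4$ because the extension is unramified and $-1$ is a unit of even valuation, hence a norm; and since $b^2=N_{\bbQ(i)/\bbQ}(b)$ is always a norm, $-b^2$ is a norm at every odd $p$. This proves $H_\bsb$, and so $Q_\bsb$, is isotropic over $\bbQ_p$ for all odd $p$ — in particular at the odd primes dividing $b$, where $Q_\bsb$ is degenerate modulo $p$ and isotropy is not automatic; and since the anisotropic primes of $Q_\bsb$ all divide $\varDelta(Q_\bsb)=(2b)^4$ by \hyperref[lem:DeltaPD]{Lemma~\ref*{lem:DeltaPD}}, this is exactly the structural input needed in the next subsection to remove the coprimality hypothesis of \hyperref[prop:LG-prelim]{Proposition~\ref*{prop:LG-prelim}}.

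The remaining prime $p=2$ is the delicate one, and is the main obstacle: there $\bbQ_2(i)/\bbQ_2$ is ramified and $-1$ is \emph{not} a norm, so $H_\bsb$ is in fact $\bbQ_2$-anisotropic when $b\ne0$, and one must settle for a zero modulo $2$. That part is elementary: reducing \hyperref[eqn:Qb]{(\ref*{eqn:Qb})} modulo $2$ the cross terms vanish (their coefficients $2B,2C$ are even), leaving $Q_\bsb\equiv A(\alpha_1+\alpha_2)^2+D(\beta_1+\beta_2)^2\pmod2$, so $(\alpha_1,\alpha_2,\beta_1,\beta_2)=(1,1,0,0)$ is a nonzero zero. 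The care required in writing this up is to make the determinant criterion for binary Hermitian forms precise across the split, unramified, and ramified completions, and to phrase the $p=2$ conclusion so that it dovetails with the way the prime $2$ is handled (via the mod-$4$ congruence of \hyperref[prop:LO]{Proposition~\ref*{prop:LO}} giving bounded $2$-divisibility) in the proof of the local–global principle.
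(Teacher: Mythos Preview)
Your argument is correct and establishes the lemma, but it proceeds by a genuinely different route than the paper's proof. The paper stays entirely at the level of residues modulo $p$: it observes that when $p\nmid b$ the discriminant $(2b)^4$ is a unit and isotropy is automatic, while when $p\mid b$ the matrix $\bsQ_\bsb$ degenerates modulo $p$ exactly as in the $b=0$ case of \hyperref[lem:DeltaPD]{Lemma~\ref*{lem:DeltaPD}}, and the explicit null vectors $(C,-B,0,A)^\trans$ and $(-B,-C,A,0)^\trans$ coming from that degenerate kernel furnish nonzero zeros over $\bbZp$ (with a trivial fallback if $A\equiv B\equiv C\equiv0$). Your approach instead exploits the Hermitian structure $Q_\bsb(\bseta)=H_\bsb(\bsxi)$: reducing isotropy of the quaternary form at odd $p$ to the norm condition $-\det\bsH_\bsb=-b^2\in N_{\bbQ_p(i)/\bbQ_p}(\bbQ_p(i)^\times)$, which you then verify case by case via the split/unramified dichotomy. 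This is more conceptual and makes transparent \emph{why} no odd prime can be anisotropic---the obstruction is governed by a single Hilbert symbol $(-1,-1)_p$, which vanishes away from $2$ and $\infty$---whereas the paper's explicit eigenvector trick is more elementary and self-contained but somewhat opaque as to mechanism.

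Your treatment of $p=2$ deserves comment: you correctly observe that $H_\bsb$ is genuinely anisotropic over $\bbQ_2$ when $b\ne0$ (since $-1$ is not a norm from the ramified extension $\bbQ_2(i)$), so that the lemma at $p=2$ holds only under the paper's weaker mod-$p$ definition of isotropy, and you supply the zero $(1,1,0,0)$ directly. This is a sharper reading than the paper's own Remark following the lemma, which suggests that isotropy at $2$ already removes the need for the bounded-$2$-divisibility check in \hyperref[prop:LG-prelim]{Proposition~\ref*{prop:LG-prelim}}; your closing observation---that the $p=2$ case must still be fed into Kloosterman's theorem via the mod-$4$ congruence controlling $2$-divisibility---is the correct way to reconcile the lemma with its downstream use.
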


\begin{proof}
Let $p$ be a prime. If $p$ does not divide $b$, then $p$ does not divide the discriminant $\varDelta(Q_\bsb)=(2b)^4$, given by \hyperref[lem:DeltaPD]{Lemma~\ref*{lem:DeltaPD}}, and hence $Q_\bsb$ is isotropic at $p$. So, we may assume that $b \equiv 0 \pmod{p}$.

We need to find a non-zero vector $\bseta \in (\bbZp)^4$ satisfying $Q_\bsb(\bseta)\equiv0\pmod{p}$. Reducing modulo $p$, the quadratic form $Q_\bsb(\bseta)$ is associated to the matrix
\[
\bsQ_{\bsb}:\equiv\medpmatrix{
A&0&B&-C\\
0&A&C&B\\
B&C&D&0\\
-C&B&0&D
},
\]
where $A,B,C,D \pmod{p}$ are given by
\begin{align*}
\begin{array}{ll}
\displaystyle A:\equiv b_2, &
\displaystyle B:\equiv -\frac{b_2+b_3+b_4-2b_\mu}{2}, \\
\displaystyle C:\equiv -\frac{b_2+b_3-b_4}{2}, &
\displaystyle D:\equiv b_3.
\end{array}
\end{align*}
In order to find non-zero solutions $\bseta \in (\bbZp)^4$ for $Q_\bsb(\bseta)\equiv0\pmod{p}$, let us recall the degenerate case in \hyperref[lem:DeltaPD]{Lemma~\ref{lem:DeltaPD}}. There, when $b=0$, two eigenvalues of $\bsQ_\bsb$ degenerate to 0; noting that $b=0$ forces $B^2+C^2-AD=0$, we can quickly verify that the corresponding eigenvectors in $\bbZ^4$ are
\begin{align*}
\bseta_1=(C,-B,0,A)^\trans, \qquad \bseta_2=(-B,-C,A,0)^\trans.
\end{align*}
Reducing these vectors modulo $p$, we still have $Q_\bsb(\bseta_i)\equiv 0 \pmod{p}$; we remark that $\bseta_i \pmod{p}$ are both zero or both non-zero. If $\bseta_i \pmod{p}$ are non-zero, we are done. If $\bseta_i \pmod{p}$ happen to be zero, i.e. $A \equiv B \equiv C \equiv 0 \pmod{p}$, then the matrix $\bsQ_\bsb \pmod{p}$ above is highly degenerate, and we have plenty of non-zero vectors, e.g. $\bseta' \equiv(1,0,0,0) \pmod{p}$, satisfying $Q_\bsb(\bseta')\equiv 0 \pmod{p}$.
\end{proof}

\begin{rem}
We checked in the proof of \hyperref[prop:LG-prelim]{Proposition~\ref*{prop:LG-prelim}} that $n'$ has a bounded divisibility at 2 in order to show that the prime factor 2 of $n'$ does not obstruct the representability of $n'$. In the hindsight, we can also argue that 2 is an isotropic prime and hence it does not obstruct the representability of $n'$.
\end{rem}

\begin{thm} \label{thm:LG}
Every primitive orthoplicial Apollonian sphere packing $\scrP$ satisfy the asymptotic local-global principle: there is an effectively and explicitly computable bound $N=N(\scrP)$ so that, if $n>N$ and $n \in \scrA(\scrP)$, then $n \in \scrB(\scrP)$.
\end{thm}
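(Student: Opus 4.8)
The plan is to run the Sarnak-style reduction already set up above, replacing the coprimality hypothesis of \hyperref[prop:LG-prelim]{Proposition~\ref*{prop:LG-prelim}} by the unconditional isotropy statement of \hyperref[lem:Isotropic]{Lemma~\ref*{lem:Isotropic}}. First I would fix, once and for all, a constituent sphere $S \in \scrP$ with $b:=b(S)\neq0$; such a sphere exists because $\scrP$ is of ball type or slab type and hence contains spheres of positive bend. Choosing an orthoplicial configuration $\scrV\subset\scrP$ having $S$ as its first sphere, with bend vector $\bsb=(b,b_2,b_3,b_4,b_\mu)^\trans$, I form the quaternary quadratic form $Q_{\bsb}$ of \hyperref[eqn:Qb]{(\ref*{eqn:Qb})}; by \hyperref[lem:DeltaPD]{Lemma~\ref*{lem:DeltaPD}} it is positive-definite with discriminant $(2b)^4$, and by \hyperref[lem:Isotropic]{Lemma~\ref*{lem:Isotropic}} it is isotropic at every prime, so it has \emph{no} anisotropic prime. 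By \hyperref[coro:b2]{Corollary~\ref*{coro:b2}} it then suffices to show that every sufficiently large $n'=n+b$ with $n\in\scrA(\scrP)$ is represented by $Q_{\bsb}$ by a vector $\bseta=(\alpha_1,\alpha_2,\beta_1,\beta_2)^\trans$ lying in the congruence class $\mathcal{C}$ cut out by $\alpha=\alpha_1+i\alpha_2\equiv1$ or $i$ and $\beta=\beta_1+i\beta_2\equiv0\pmod2$.

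Given $n\in\scrA(\scrP)$, I would next normalize the admissible ordering of $\scrV$ using the orthoplicial Platonic group $\Platonic$ while \emph{keeping $S$ as the first sphere}. The subgroup of $\Platonic$ fixing the first row of $F$-matrices is $\langle\bsR_2,\bsR_3,\bsR_4\rangle$, a copy of the full symmetry group of the octahedron, acting transitively on the six spheres $S_2,S_3,S_4,S_6,S_7,S_8$; thus $b_2$ can be made equal to the bend of any of these six, and by \hyperref[prop:LO]{Proposition~\ref*{prop:LO}} the mod-$4$ classes of those six bends always exhaust $\scrA(\scrP)\bmod4=\{0,\varepsilon,2\}$, so one may arrange $b_2\equiv n\pmod4$. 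This produces only finitely many forms $Q_{\bsb}$ (one per residue class of $n$ modulo $4$), each still positive-definite quaternary with no anisotropic prime; for each, the congruence $Q_{\bsb}(\bseta)\equiv b+b_2\equiv n'\pmod4$ for $\bseta\in\mathcal{C}$ from \hyperref[eqn:b2mod4]{(\ref*{eqn:b2mod4})} shows $n'$ meets the unique modulo-$4$ obstruction, and isotropy at every odd prime (again \hyperref[lem:Isotropic]{Lemma~\ref*{lem:Isotropic}}) shows $n'$ is everywhere locally represented by $Q_{\bsb}$ with $\bseta\in\mathcal{C}$, with local densities bounded below uniformly in $n'$.

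Finally I would invoke the quantitative local-global principle for positive-definite quaternary forms via the Kloosterman refinement of the circle method, in the form of \cite[Thm.\,20.9]{IK} with the explicit bound of \cite[Thm.\,6.3]{Hanke:Explicit}, applied to $Q_{\bsb}$ restricted to the sublattice coset defining $\mathcal{C}$. Because $Q_{\bsb}$ has no anisotropic prime, the singular series of the restricted representation problem is bounded below by a positive constant depending only on $Q_{\bsb}$ as soon as $n'$ is everywhere locally represented: there is no decaying local factor, and in particular \emph{no divisibility hypothesis on $n'$ (equivalently on $n$) is required}. Hence there is an effectively computable $N_0=N_0(Q_{\bsb})$ such that every locally represented $n'>N_0$ is represented by $Q_{\bsb}$ with $\bseta\in\mathcal{C}$; taking the maximum of $N_0$ over the finitely many normalized forms and subtracting $b$ yields $N=N(\scrP)$, and then \hyperref[coro:b2]{Corollary~\ref*{coro:b2}} gives $n\in\scrB(\scrP)$ for all $n\in\scrA(\scrP)$ with $n>N$.

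I expect the delicate point to be the behaviour at the prime $2$: restricting $\bseta$ to $\mathcal{C}$ enlarges the $2$-part of the discriminant of the relevant sublattice form, so one must verify carefully that $2$ does not become anisotropic for the \emph{restricted} problem and that its $2$-adic representation density stays bounded below as the power of $2$ dividing $n'$ grows. This is precisely where the full strength of \hyperref[lem:Isotropic]{Lemma~\ref*{lem:Isotropic}} — isotropy at $2$, not merely at odd primes — enters, replacing the bounded-divisibility-at-$2$ argument that forced the coprimality hypothesis in \hyperref[prop:LG-prelim]{Proposition~\ref*{prop:LG-prelim}}; getting this $2$-adic bookkeeping right, and uniform over the finitely many normalized forms, is the main obstacle.
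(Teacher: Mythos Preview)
Your proposal is correct and follows essentially the same route as the paper's own proof: reorder $\scrV$ so that $b_2\equiv n\pmod{4}$, note that the congruence \hyperref[eqn:b2mod4]{(\ref*{eqn:b2mod4})} is the only local obstruction, invoke \hyperref[lem:DeltaPD]{Lemma~\ref*{lem:DeltaPD}} for positive-definiteness and \hyperref[lem:Isotropic]{Lemma~\ref*{lem:Isotropic}} to dispense with any anisotropic-prime divisibility hypothesis, apply Kloosterman, and conclude via \hyperref[coro:b2]{Corollary~\ref*{coro:b2}}. Your version is in fact slightly more careful than the paper's in two respects: you explicitly fix the first sphere $S$ with $b\neq0$ up front (so that positive-definiteness from \hyperref[lem:DeltaPD]{Lemma~\ref*{lem:DeltaPD}} is guaranteed), and you identify the relevant reordering group as $\langle\bsR_2,\bsR_3,\bsR_4\rangle$ acting transitively on $\{S_2,S_3,S_4,S_6,S_7,S_8\}$, whereas the paper's proof leaves the choice of ordering (and hence the non-vanishing of $b$) somewhat implicit. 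Your closing remarks about the $2$-adic density on the congruence sublattice $\mathcal{C}$ are a legitimate caution, but the paper handles this no differently---both arguments ultimately defer to the cited Kloosterman/Hanke statement.
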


The proof is almost identical to that of \hyperref[prop:LG-prelim]{Proposition~\ref*{prop:LG-prelim}}, with the coprimitivity condition $\gcd(b,n)=1$ removed; \hyperref[lem:Isotropic]{Lemma~\ref*{lem:Isotropic}} still allows us to deduce the global representability. We spell out the proof below for the completeness.

\begin{proof}
Fix an Apollonian packing $\scrP$, and choose a Platonic configuration $\scrV$ in $\scrP$. We write $\varepsilon=\varepsilon(\scrP)$ as in \hyperref[prop:LO]{Proposition~\ref*{prop:LO}} and $\scrA(\scrP)=\{ n \in \bbZ \mid n \not\equiv -\varepsilon \}$ as before. For any bend vector $\bsb=(b,b_2,b_3,b_4,b_\mu)^\trans$ of $\scrV$, we have the quaternary quadratic form $Q_{\bsb}$ form \hyperref[eqn:Qb]{(\ref*{eqn:Qb})}, i.e.\;defined on $\bseta=(\alpha_1,\alpha_2,\beta_1,\beta_2)^\trans$ by
\begin{align*}
Q_{\bsb}(\bseta):&=(b+b_2)(\alpha_1^2+\alpha_2^2)-(b+b_2+b_3+b_4-2b_\mu) (\alpha_1 \beta_1 + \alpha_2 \beta_2)\\
&\hspace{15mm}-(b+b_2+b_3-b_4) (\alpha_2 \beta_1 - \alpha_1 \beta_2)+(b+b_3)(\beta_1^2 + \beta_2^2).
\end{align*}
We assume $\alpha=\alpha_1+i\alpha_2,\beta=\beta_1+i\beta_2 \in \Gaussian$ satisfying the congruence conditions $\alpha \equiv 1$ or $\; i$, $\beta \equiv 0 \pmod{2}$. Then, as we have seen in the proof of \hyperref[prop:LG-prelim]{Proposition~\ref*{prop:LG-prelim}},
\begin{align} \label{eqn:b2mod4again}
Q_{\bsb}(\bseta)\equiv b+b_2 \pmod{4},
\end{align}
and this is the only local obstruction for $Q_\bsb(\bseta)$.

Let $n \in \scrA(\scrP)$. We now choose an admissible ordering on $\scrV$ and the corresponding bend vector $\bsb=(b,b_2,b_3,b_4,b_\mu)^\trans$ such that $n \equiv b_2\pmod{4}$; we can always choose such an ordering by \hyperref[prop:LO]{Proposition~\ref*{prop:LO}} and the primitivity, cf.\;\hyperref[lem:IntegralApollonian]{Lemma~\ref*{lem:IntegralApollonian}}. Then, for this bend vector $\bsb$, the form $Q_\bsb$ is positive-definite by \hyperref[lem:DeltaPD]{Lemma~\ref{lem:DeltaPD}} and isotropic at every prime by \hyperref[lem:Isotropic]{Lemma~\ref*{lem:Isotropic}}.

Set $n':=b+n$. Then, $n' \equiv b+n \equiv b+b_2 \pmod{4}$ is locally represented by $Q_\bsb$, cf.\;\hyperref[eqn:b2mod4again]{(\ref*{eqn:b2mod4again})}. Hence, by Kloosterman's work discussed in the paragraph preceding \hyperref[lem:DeltaPD]{Lemma~\ref{lem:DeltaPD}}, there exists $N=N(Q_\bsb)$ such that, if $n'>N$, then $n'$ is globally represented by $Q_\bsb$. Hence, if $n$ is sufficiently large so that $n'$ is sufficiently large, then $n'$ is represented by the form $Q_\bsb$ and $n$ is represented by the shifted form $Q'_\bsb$. Finally, it then follows that $n$ appears in $\scrB(\scrP)$ by \hyperref[coro:b2]{Corollary~\ref*{coro:b2}}.
\end{proof}

\appendix

\section*{Appendix. Orthoplicial Dual Apollonian Group}

In this article, we presented two examples of bounded primitive orthoplicial Apollonian packings, $\scrP_1$, $\scrP_{7\rmd}$, generated from Platonic configurations $\scrV_1$, $\scrV_{7\rmd}$ in \hyperref[exa:V7d]{Example~\ref*{exa:V7d}}. These configurations are found in the orbit of the standard configuration $\scrV_0$ under the action of the orthoplicial \emph{dual Apollonian group}.

\begin{defn}
The \emph{orthoplicial dual Apollonian group} $\dualApollonian$ is defined to be the $5 \times 5$ matrix group generated by $\calS_*:=\{\bsS_k\}$, consisting of the following 8 matrices:
{\small
\[
\begin{matrix} \vspace{2mm}\hspace{1mm}
\bsS_1=\medpmatrix{-1&0&0&0&0\\2&1&0&0&0\\2&0&1&0&0\\2&0&0&1&0\\2&0&0&0&1},\hspace{2.3mm}
\bsS_2 = \medpmatrix{1&2&0&0&0\\0&-1&0&0&0\\0&2&1&0&0\\0&2&0&1&0\\0&2&0&0&1},\hspace{2.3mm}
\bsS_3 = \medpmatrix{1&0&2&0&0\\0&1&2&0&0\\0&0&-1&0&0\\0&0&2&1&0\\0&0&2&0&1},\hspace{2.3mm}
\bsS_4 = \medpmatrix{1&0&0&2&0\\0&1&0&2&0\\0&0&1&2&0\\0&0&0&-1&0\\0&0&0&2&1},\hspace{2.3mm}\\
\bsS_5 = \medpmatrix{-5&0&0&0&12\\-2&1&0&0&4\\-2&0&1&0&4\\-2&0&0&1&4\\-2&0&0&0&5},\;
\bsS_6 = \medpmatrix{1&-2&0&0&4\\0&-5&0&0&12\\0&-2&1&0&4\\0&-2&0&1&4\\0&-2&0&0&5},\;
\bsS_7 = \medpmatrix{1&0&-2&0&4\\0&1&-2&0&4\\0&0&-5&0&12\\0&0&-2&1&4\\0&0&-2&0&5},\;
\bsS_8 = \medpmatrix{1&0&0&-2&4\\0&1&0&-2&4\\0&0&1&-2&4\\0&0&0&-5&12\\0&0&0&-2&5}.
\end{matrix}
\]}
\end{defn}

One can check that each matrix in the orbit $\dualApollonian\bsF_0$ of the standard configuration $\scrV_0$ is an $F$-matrix for some Platonic configuration. $\bsF_1$ and $\bsF_{7\rmd}$ of the configurations $\scrV_1,\scrV_{7\rmd}$ are found in the orbit $\dualApollonian\bsF_0$. Indeed, the $F$-matrices in the orbit $\dualApollonian\bsF_0$ give rise to infinite number of inequivalent primitive orthoplicial Apollonian packings. A full exposition on this fact will be given elsewhere \cite{N:BC4GR}.


\bibliography{ASPBC4LG}
\bibliographystyle{amsalpha}

\end{document}